\documentclass[10pt,reqno]{amsart}

\usepackage{amssymb,amsmath,amsthm,amsxtra,amsfonts}
\usepackage{graphicx}

% font selection

\usepackage{setspace}

\usepackage{hyperref}

\bibliographystyle{amsplain}
% Remove option referee for final version
%
% Remove any % below to load the required packages
%\usepackage{latexsym}
\usepackage{graphics}
% etc
%

%\usepackage{amssymb,amsmath,amsthm,amsxtra,amsfonts}
%\usepackage{showkeys}
\usepackage{graphicx}
\usepackage{amsmath,amssymb}
% font selection

\usepackage{setspace}

\newtheorem{theorem}{Theorem}[section]

\newtheorem{lemma}[theorem]{Lemma}

\numberwithin{equation}{section}
\newtheorem{definition}[theorem]{Definition}

\DeclareMathOperator{\diag}{diag}
\theoremstyle{remark}
\newtheorem*{remarks}{Remarks}
\newtheorem*{remark}{Remark}

%%%%%%%%%%%%% fonts/sets %%%%%%%%%%%%%%%%%%%%%%%
\newcommand{\bbC}{{\mathbb{C}}}
\newcommand{\bbD}{{\mathbb{D}}}

\newcommand{\bbR}{{\mathbb{R}}}

\newcommand{\bbZ}{{\mathbb{Z}}}

\newcommand{\fre}{{\mathfrak{e}}}

\newcommand{\calJ}{{\mathcal J}}

\newcommand{\calR}{{\mathcal R}}
\newcommand{\calS}{{\mathcal S}}
\newcommand{\calT}{{\mathcal T}}

\newcommand{\bdone}{{\boldsymbol{1}}}
\newcommand{\bdnot}{{\boldsymbol{0}}}

%%%%%%%%%%%%%%%%%%  abbreviations %%%%%%%%%%%%%%%%%%%%%%%%

\newcommand{\lla}{\left\langle\!\left\langle}
\newcommand{\rra}{\right\rangle\!\right\rangle}

\newcommand{\tr}{\text{\rm{Tr}}}

\newcommand{\spann}{\text{\rm{span}}}

\newcommand{\ran}{\text{\rm{Ran}\,}}

\newcommand{\beq}{\begin{equation}}
\newcommand{\eeq}{\end{equation}}
\newcommand{\ba}{\begin{align*}}
\newcommand{\ea}{\end{align*}}
\newcommand{\veps}{\varepsilon}

%%%%%%%%%%%%%%%%%%%%%% operators %%%%%%%%%%%%%%%%%%%%%%

\DeclareMathOperator{\imag}{Im}

\DeclareMathOperator*{\esssup}{ess\,supp}

\DeclareMathOperator*{\res}{Res}

%%%%%%%%%%%%%%%%%%  abbreviations %%%%%%%%%%%%%%%%%%%%%%%%

%%%%%%%%%%%%%%%%%%%%%% operators %%%%%%%%%%%%%%%%%%%%%%
%%%%%%%%%%%%%%%%%%%%%%%%%%%%%%%%%%%%%%%%%%%%%%
%%%%%%%%%%%%%%%%%%%% end of  definitions
%%%%%%%%%%%%%%%%%%%%%%%%%%%%%%%%%%%%%%%%%%%%%%
%%%%%%%%%%%%%%%%%%%%%% operators %%%%%%%%%%%%%%%%%%%%%%
%%%%%%%%%%%%%%%%%%%%%%%%%%%%%%%%%%%%%%%%%%%%%%
%%%%%%%%%%%%%%%%%%%% end of  definitions
%%%%%%%%%%%%%%%%%%%%%%%%%%%%%%%%%%%%%%%%%%%%%%

\begin{document}
\title[Meromorphic continuations and periodic Jacobi matrices]{Meromorphic continuations of finite gap Herglotz functions and
periodic Jacobi matrices}
%\titlerunning{Meromorphic continuations and periodic Jacobi matrices}
\author{Rostyslav Kozhan %\and Second author\inst{2}% etc
% \thanks is optional - remove next line if not needed
%\thanks{\emph{Present address:} Insert the address here if needed}%
}                     % Do not remove
%
%\institute{ University of California, Los Angeles; Department of Mathematics; Los Angeles, CA 90095, USA. \email{kozhan@math.ucla.edu}
%\and the second here
%}                       % Do not remove
%
\date{Submitted: Oct 17, 2012; revised: Sept 13, 2013 \\ Email: kozhan@math.ucla.edu}
\address{University of California, Los Angeles\\
Department of Mathematics\\
Los Angeles, CA 90095, USA}
% The correct dates will be entered by Springer
%
% Add name of the expert who has communicated your paper
%\communicated{Michael Aizenman}
%
\maketitle
\begin{abstract}
%Let $\mu$ be a finite measure on the real line whose essential support $\fre$ is a finite number of
%intervals (with equal equilibrium measures). Then the associated Herglotz function $m(z)=\int (x-z)^{-1} d\mu(x)$ (also called the Borel transform of $\mu$)
%is meromorphic on $\mathbb{C} \setminus \fre$. $\mathbb{C} \setminus \fre$ can be viewed as
%one of the two sheets of a natural Riemann surface. A necessary and sufficient condition is found
%for the function $m$ to have a meromorphic extension to some region on the second sheet of this
%surface. The domain of meromorphicity can be described explicitly in terms of the distance of the
%associated Jacobi matrix from the isospectral torus of periodic Jacobi matrices corresponding
%to the set $\fre$.
%
%\medskip
%or
%\medskip

%We find a necessary and sufficient condition for a finite gap Herglotz function $m$ to be the Borel transform of the spectral measure of a Jacobi matrix with the prescribed ``distance'' from the isospectral torus $\calT_\fre$ of periodic Jacobi matrices associated with a given finite gap set $\fre$ (with all gaps open). The condition is in terms of meromorphic continuations of the function $m$ to a natural Riemann surface $\calS_\fre$ and the structure of  poles and zeros of $m$. The analogous result is also established for the Borel transform of the spectral measure of eventually periodic Jacobi matrices.

We find a necessary and sufficient condition for a Herglotz function $m$ to be the Borel transform of the spectral measure of an exponentially decaying perturbation of a periodic Jacobi matrix. The condition is in terms of meromorphic continuation of $m$ to a natural Riemann surface and the structure of its zeros and poles.

The analogous result is also established for the Borel transform of the spectral measure of eventually periodic Jacobi matrices.

This paper generalizes the corresponding result from \cite{K_jost} for exponentially decaying perturbations of the free Jacobi matrix.
\end{abstract}

%%%%%%%%%%%%%%%%%%%%%%%%%%%%%%%%%%%%%%%%%%%%%%%%%%%%%%%%%%%%%%%%%%%%%%%%%%%%%%%%%%%%%%%%%

%\begin{keyword}  Meromorphic continuations \sep periodic orthogonal polynomials
%% keywords here, in the form: keyword \sep keyword

%% PACS codes here, in the form: \PACS code \sep code

%% MSC codes here, in the form: \MSC code \sep code
%% or \MSC[2008] code \sep code (2000 is the default)

%\end{keyword}

%\end{frontmatter}

%%%%%%%%%%%%%%%%%%%%%%%%%%%%%%%%%%%%%%%%%%%%%%%%%%%%%%%%%%%%%%%%%%%%%%%%%%%%%%%%%%%%%%%%%%%%%%%%%%

\begin{section}{Introduction}\label{Intro}

Let $\mu$ be a probability measure on the real line $\bbR$ with compact support. Denote by
\begin{equation}\label{intro1}
m(z)=\int \frac{d\mu(x)}{x-z}, \quad z\notin\esssup\mu
\end{equation}
the Borel transform (also sometimes referred as the Stieltjes transform) of $\mu$.
It is a Herglotz function: if $\imag z>0$ then $\imag m(z)>0$.

Assuming $\mu$ is a non-trivial measure, i.e., not supported on finitely many points, we can apply the Gram--Schmidt algorithm to orthonormalize the sequence of polynomials $\{x^n\}_{n=0}^\infty$. Let the resulting sequence of orthonormal polynomials be $\{p_n(x)\}_{n=0}^\infty$. They satisfy the Szeg\H{o} recurrence
\begin{equation}\label{intro2}
x p_n(x)=p_{n+1}(x)a_{n+1} +p_n(x)b_{n+1}+p_{n-1}(x) a_n, \quad n=1,2,\ldots,
\end{equation}
for some sequences of real numbers $a_n>0$ and $b_n\in\bbR$, called the Jacobi coefficients. In fact, if we put $p_{-1}(x)\equiv 0$, then \eqref{intro2} holds for $n=0$ too. Now one can see that the operator of multiplication by $x$ in $L^2(\mu)$ in the basis $\{p_n(x)\}_{n=0}^\infty$ has the form
\begin{equation}\label{intro3}
\calJ=\left(
\begin{array}{cccc}
b_1&a_1&{0}&\\
a_1 &b_2&a_2&\ddots\\
{0}&a_2 &b_3&\ddots\\
&\ddots&\ddots&\ddots\end{array}\right).
\end{equation}
This three-diagonal matrix is called the Jacobi matrix associated with the measure $\mu$. One can recover $\mu$ from $\calJ$ by finding the spectral measure of $\calJ$ corresponding to the vector $\delta_1=(1,0,0,\ldots)^T$. Thus from the operator viewpoint, function~\eqref{intro1} is just the $(1,1)$-entry of the resolvent of $\calJ$, also sometimes referred to as a Green's function in spectral theory.

The theme of this paper is that certain analytic properties of $m$ determine (in an if and only if fashion) how close $\calJ$ is to being periodic. The prototype for this is the following result from \cite{K_jost}.

The simplest Jacobi matrix is the one with constant Jacobi coefficients. After translating and scaling we may consider $a_n=1$, $b_n=0$, $n\ge1$. We will refer to this matrix as the free Jacobi matrix. The Borel transform of $\mu$ corresponding to the free Jacobi matrix is
\begin{equation}\label{m_free}
m(z)=\frac{-z+\sqrt{z^2-4}}{2},
\end{equation}
with the principal branch for the square root. %This expression easily follows from the recursion relation \eqref{m_recur} and the fact that $m(z)$  must be $O(z^{-1})$ at infinity.

%Note that the function \eqref{m_free} has a meromorphic continuation to a natural Riemann surface: take two copies $\calS_+$, $\calS_-$ of $\bbC\cup\{\infty\}\setminus [-2,2]$ and glue them together so that the top part of the slit $[-2+i0,2+i0]$ on the first sheet $\calS_+$ is glued to the bottom part of the slit $[-2-i0,2-i0]$ on the second sheet $\calS_-$, and vice versa.

Note that the function \eqref{m_free} has a meromorphic continuation to the hyperelliptic Riemann surface associated with the polynomial $z^2-4$. Informally one may think of this surface as two sheets of $\bbC\cup\{\infty\}\setminus [-2,2]$ glued together along the slit (see Section \ref{Prelim_Finite} for more details). It was shown in \cite[Thm 3.8]{K_jost}, that if a Jacobi matrix is ``exponentially close'' to being free (in the sense of \eqref{expo}), then its Borel transform $m$ has a meromorphic continuation through $[-2,2]$ to an explicit region on the second sheet. Instead of continuations of $m$ through $[-2,2]$, we can equivalently consider continuations of
\begin{equation}\label{capM}
M(z)=m(z+z^{-1}), \quad z\in\bbD=\{z:|z|<1\}
\end{equation}
through $\partial\bbD=\{z: |z|=1\}$. The result of \cite[Thm 3.8]{K_jost} then says that

%Let us set some notation. Denote by $\pi:\calS\to \bbC\cup\{\infty\}$ the ``projection map'' which extends the natural inclusions $\calS_+\hookrightarrow \bbC\cup\{\infty\}$, $\calS_-\hookrightarrow \bbC\cup\{\infty\}$. For every $z\in\bbC\cup\{\infty\}\setminus \{-2,2\}$ let $z_+$ and $z_-$ be the two preimages $\pi^{-1}(z)$ in $\calS_+$ and $\calS_-$ respectively (for $z\in\{-2,2\}$, $z_+$ and $z_-$ coincide). Let us write $z^\sharp$ to denote $\overline{z}_-$ if $z\in \calS_+$ and $\overline{z}_+$ if $z\in \calS_-$. Finally let  $m^\sharp(z)=\overline{m(z^\sharp)}$.

\begin{equation}\label{expo}
\limsup_{n\to\infty}\left(|b_n|+|1-a_n^2|\right)^{1/2n}\le R^{-1}
\end{equation}
if and only if \eqref{capM} satisfies
\begin{list}{}
{
\setlength\labelwidth{2em}%
\setlength\labelsep{0.5em}%
\setlength\leftmargin{2.5em}%
}
\item[(a)] $M$ has a meromorphic continuation to $\{z:|z|<R\}$;
\item[(b)] $M$ has no poles on $\partial\bbD$, except possibly at $\pm1$, where they are at most simple;
\item[(c)] $M(z)-M^\sharp(z)$ has no zeros in $\{z: R^{-1}<|z|<R \}$, except possibly at $\pm1$, where they are at most simple;
\item[(d)] if $M$ has a pole $z\in\bbD$ with $R^{-1}<|z|<1$, then $\bar{z}^{-1}$ is not a pole of $M$.
\end{list}
Here $M^\sharp(z)=\overline{M(\bar{z}^{-1})}$.
In fact, \cite{K_jost} also covers the case of matrix-valued measures. See  Lemmas \ref{p4_lm_M} and \ref{p4_lm_M_finite} below for the exact statement of the results.

%\begin{itemize}
%\item $m$ has a meromorphic continuation from $\calS_+$ to the set $E_R$ of the second sheet $\calS_-$, where $E_R$ is the interior of the preimage of $\{|z|<R\}$ under $z+z^{-1}$;
%\item $m$ has no poles in $\pi^{-1}(-2,2)$, and at most simple poles at $\pi^{-1}(\pm2)$;
%\item $m-m^\sharp$ has no zeros in $\pi^{-1}(E_R)$, except at $\pi^{-1}(\pm2)$, where they are at most simple;
%\item if $m$ has a pole $z\in\pi^{-1}(E_R\setminus[-2,2])$, then $z^\sharp$ is not a pole of $m$.
%\end{itemize}
%In fact, \cite{K_jost} also covers the case of matrix-valued measures. See below Lemma \ref{} for the exact statement.

%So, at least heuristically, Jacobi parameters behave in a similar fashion to the Taylor/Fourier coefficients. A reader interested in this analogy should also look at the recent Breuer--Simon results \cite{Breuer_Simon}.

The purpose of the current paper is to establish the analogue of the above equivalence for  perturbations of the periodic Jacobi matrices. Another way to put it, instead of considering $\esssup \mu = [-2,2]$  in this equivalence, we are extending it to the case $\esssup\mu = \cup_{j=1}^p [\alpha_j,\beta_j]$, a finite gap set.

One may put this result on its head and say that we obtain a criterion for a finite gap Herglotz function to have a meromorphic continuation without degeneracies of types (b), (c), (d).

The main results of this paper are stated in Theorem~\ref{p4_th_merom} and~\ref{p4_th_finite_merom} in Section~\ref{Results} below. One has to be careful in the periodic setting, since there is a whole multidimensional set of periodic Jacobi matrices that have the same spectrum. Theorem~\ref{p4_th_merom} corresponds to exponentially decaying perturbations of periodic Jacobi matrices, and Theorem~\ref{p4_th_finite_merom} is the refinement for the eventually periodic Jacobi matrices.
%Thus the analogue of \eqref{expo} is that a distance from a Jacobi matrix to the  \textit{set} of isospectral periodic Jacobi matrices is exponentially small. We discuss isospectral periodic Jacobi matrices and define the right notion of distance in Section ?.?.

The idea of the proof is to use the ``Magic Formula'' of Damanik--Killip--Simon (see Lemma~\ref{magic} in Appendix) which establishes a connection to the matrix-valued problem, and then apply the author's matrix-valued result (Lemmas~\ref{p4_lm_M}, \ref{p4_lm_M_finite}).

The present paper covers only the case when all the intervals $[\alpha_j,\beta_j]$ have equal equilibrium measure (the so-called ``all gaps open'' case). Even though this is a generic situation for the periodic Jacobi matrices, it would still be interesting to prove Theorems \ref{p4_th_merom} and \ref{p4_th_finite_merom} in the cases when measures of $[\alpha_j,\beta_j]$ are rational but unequal (``some gaps closed'' periodic setting), as well as when measures of $[\alpha_j,\beta_j]$ are not all rational (almost periodic setting). There is little doubt that similar theorems should still hold in these situations. However one would have to come up with a different approach to prove them, without the reliance on the Damanik--Killip--Simon formula.

For the background discussion of Jacobi matrices and orthogonal polynomials, see, e.g.,~\cite{Rice}. A textbook exposition of the theory of \textit{periodic} Jacobi matrices can be found there as well (in Chapter 5), along with an extensive historical discussion. Papers related to exponentially decaying perturbations of Jacobi matrices include (but are likely not limited to)~\cite{DS2,Geronimo_matrix,Geronimo,K_jost,S_merom_jost}.

The results of the present paper were completed and presented in 2010 (see the author's PhD thesis~\cite{K_thesis}). Later there appeared an independent series of papers by Iantchenko--Korotyaev~\cite{IK3,IK1,IK2}, who study eventually periodic Jacobi matrices, but from another perspective and using an entirely different approach. Their results are related to our Theorem~\ref{p4_th_finite_merom}. It should be noted however that the models and the results are different: Iantchenko--Korotyaev fix a periodic Jacobi matrix, which is assumed to be known, and then consider compact perturbations of it. In our approach, we fix the support of the spectrum and consider compact perturbations of any Jacobi matrix from the isospectral torus, without any other knowledge about it. %It does not seem obvious if their results can be deduced from our Theorem~\ref{p4_th_finite_merom} or conversely.

%The results of the present paper were completed and presented in 2010 (see the author's PhD thesis \cite{K_thesis}). Later there appeared an independent series of papers by Iantchenko--Korotyaev \cite{IK3,IK1,IK2}, who study eventually periodic Jacobi matrices, but from another perspective and using an entirely different approach. Their results are related to our Theorem~\ref{p4_th_finite_merom}, but not in a trivial manner.

The organization of the paper is as follows. Section~\ref{Prelim} contains the necessary definitions and preliminary information. Section~\ref{Results} contains the two main theorems. Section~\ref{Proofs} contains the proofs. Appendix contains all the necessary results from the theory of scalar and matrix-valued orthogonal polynomials (Appendix~\ref{Appendix_Orthogonal}), periodic Jacobi matrices and the connection between periodic and matrix-valued settings (Appendix~\ref{Appendix_Periodic}), general facts about matrix-valued functions (Appendix~\ref{Appendix_Matrixvalued}) and Herglotz functions (Appendix~\ref{Appendix_Herglotz}). A reader not familiar with the theory of orthogonal polynomials should familiarize (him/her)self with Appendices~\ref{Appendix_Orthogonal} and~\ref{Appendix_Periodic} prior to reading the proofs in Section~\ref{Proofs}. Otherwise, appendices can be used when referred to.

\end{section}

%%%%%%%%%%%%%%%%%%%%%%%%%%%%%%%%%%%%%%%%%%%%%%%%%%%%%%%%%%%%%%%%%%%%%%%%%%%%%%%%%%%%%%%%%%%%%%%%%
%%%%%%%%%%%%%%%%%%%%%%%%%%%%%%%%%%%%%%%%%%%%%%%%%%%%%%%%%%%%%%%%%%%%%%%%%%%%%%%%%%%%%%%%%%%%%%%%%

%%%%%%%%%%%%%%%%%%%%%%%%%%%%%%%%%%%%%%%%%%%%%%%%%%%%%%%%%%%%%%%%%%%%%%%%%%%%%%%%%%%%%%%%%%%%%%%%%%%%%%%%%%%%%%%%%%%%%%%%
\begin{section}{Preliminaries}\label{Prelim}

\begin{subsection}{Finite Gap Sets and Surface $\calS_\fre$}\label{Prelim_Finite}

In this subsection let us assume that $\mu$ is a probability measure, and its essential support  is a finite union of closed intervals (``{finite gap set}'')
\begin{equation}\label{p4_fre}
\esssup\mu=\fre=\bigcup_{j=1}^{g+1} [\alpha_j,\beta_j], \quad \alpha_j<\beta_j <\alpha_{j+1}.
\end{equation}
We will be referring to the collections of intervals $[\alpha_j,\beta_j]$ ($1\le j\le g+1$) as ``{bands}'', and $[\beta_j,\alpha_{j+1}]$ ($1\le j\le g$) as ``{gaps}''.
As we will see soon, the spectral measures of periodic Jacobi matrices have exactly this form.

Then $m$, defined by \eqref{intro1}, is a meromorphic function on $\bbC\setminus \fre$, and it is natural to ask if $m$ has a meromorphic continuation through $\fre$. Indeed, this is the analogue of the meromorphic continuation for the  $\fre=[-2,2]$ case that we discussed in the Introduction. Let us introduce the natural  Riemann surface that arises here.

\begin{definition}\label{s}
Assume $\fre$ is a finite gap set \eqref{p4_fre}. Define $\calS_\fre$ to be the hyperelliptic Riemann surface corresponding to the polynomial $\prod_{j=1}^{g+1}(z-\alpha_j)(z-\beta_j)$.
\end{definition}

We will not give the formal definition, which can be found in many textbooks (see, e.g., \cite[Sect 5.12]{Rice}). Informally $\calS_\fre$ can be described as follows.

Let $\bbC_+ = \{z: \imag z>0\}$, $\bbC_- = \{z:\imag z<0\}$. Denote $\calS_+$ and $\calS_-$ to be two copies of $\bbC\cup\{\infty\}$ with a slit along $\fre$ (include $\fre$ as a top edge and exclude it from the lower), and let $\calS_\fre$ be $\calS_+$ and $\calS_-$ glued together along $\fre$ in the following way: passing from $\bbC_+ \cap \calS_+$ through $\fre$ takes us to $\bbC_- \cap \calS_-$, and from $\bbC_- \cap \calS_+$ to $\bbC_+ \cap \calS_-$. It is clear that topologically $\calS_\fre$ is  an orientable manifold of genus $g$.

Let $\pi:\calS_\fre\to \bbC\cup\{\infty\}$ be the ``projection map'' which extends the natural inclusions $\calS_+\hookrightarrow \bbC\cup\{\infty\}$, $\calS_-\hookrightarrow \bbC\cup\{\infty\}$.
%Finally let us denote by $z_+$ and $z_-$ the two preimages $\pi^{-1}(z)$ of $z\in\bbC\cup\{\infty\}$ (for $z\in\cup_{j=1}^{g+1}\{\alpha_j,\beta_j\}$, $z_+$ and $z_-$ coincide).

The following notation will be used frequently throughout the paper.
\begin{definition}\label{sharp}
\hspace*{\fill}
\begin{list}{}
{
\setlength\labelwidth{2em}%
\setlength\labelsep{0.5em}%
\setlength\leftmargin{2.5em}%
}
\item[$\bullet$] For $z\in\bbC\cup\{\infty\}$, denote by $z_+$ and $z_-$ the two preimages $\pi^{-1}(z)$ in $\calS_+$ and $\calS_-$ respectively $($for $z\in\cup_{j=1}^{g+1}\{\alpha_j,\beta_j\}$, $z_+$ and $z_-$ coincide$)$.
\item[$\bullet$] Let $z^\sharp$ be $\left(\overline{\pi(z)}\right)_-$ if  $z\in \calS_+\setminus\pi^{-1}(\fre)$, and %$\bar{z}_+$
    $\left(\overline{\pi(z)}\right)_+$ if $z\in \calS_-\setminus\pi^{-1}(\fre)$. In order to make this continuous, we make the convention  $z^\sharp=z$ for $z\in \pi^{-1}(\fre)$.
\item[$\bullet$] Let  $m^\sharp(z)=m(z^\sharp)^*$.
\end{list}
\end{definition}
Here bar means complex conjugation, and $^*$ means Hermitian conjugation (later on we will allow $m$ to be a matrix-valued function).

\end{subsection}
%%%%%%%%%%%%%%%%%%%%%%%%%%%%%%%%%%%%%%%%%%%%%%%%%%%%%%%%%%%%%%%%%%%%%%%%%%%%%%%%%%%%%%%%%%%%%%%%%%%%%%%%%%%%%%%%%%%%%%%%%%%%%%%%%%%%%%%%%%%%%%%%%%%%%%%%%

\begin{subsection}{Periodic Orthogonal Polynomials on the Real Line}\label{Prelim_Periodic}

For all the proofs of the facts in this subsection, we refer the reader to \cite{Rice} and references therein. Some of the basics of the theory of  orthogonal polynomials, along with the necessary lemmas, are also listed below in Appendices~\ref{Appendix_Orthogonal} and~\ref{Appendix_Periodic}.

A  Jacobi matrix $\calJ$, see \eqref{intro3}, is called periodic   if there exists an integer $p\ge1$ such that
\begin{equation}\label{p4_periodic}
a_{n+p}=a_n, \quad b_{n+p}=b_n \quad \mbox{for all }n.
\end{equation}
One can also talk about two-sided Jacobi matrices, which are  operators on $\ell^2(\bbZ)$ of the same tridiagonal form as \eqref{intro3}, where  sequences $\{a_n, b_n\}_{n\in\bbZ}$ are now extended to the whole $\bbZ$. The same definition of periodicity \eqref{p4_periodic} applies to a two-sided Jacobi matrix as well. We will commonly use $(a_n,b_n)_{n=1}^\infty$, $(a_n,b_n)_{n\in\bbZ}$ %(or sometimes $(a_n,b_n)_{n=1}^p$)
as a notation for one-sided and two-sided Jacobi matrices, respectively.

For a one- or two-sided $p$-periodic Jacobi matrix one can associate the polynomial of degree $p$ with real coefficients
\begin{equation}\label{p4_discr}
\Delta(z)=\tr \left( \prod_{j=p}^1 \frac{1}{a_{j}} \left( \begin{array}{cc} z-b_j&-1\\a_j^2&0\end{array}  \right)\right),
\end{equation}
which is called the \textit{discriminant} of $\calJ$. %Note that this is just the trace of the update matrix corresponding to the vector $(u_{n+1},a_n u_n)^T$, where $u_n$ is a solution to the recurrence
%\begin{equation*}
%z u_n=a_n u_{n+1} + b_n u_n +a_{n-1} u_{n-1}.
%\end{equation*}

The polynomial $\Delta$ has numerous useful properties, some of which we list in Lemma~\ref{discriminant}. The most important for us here is that it determines the spectrum of $\calJ$.
%
%
%It turns out that
%\begin{equation}
%\Delta(z)=p_p(z)-a_p q_{p-1}(z),
%\end{equation}
%where $p_j,q_j$ are orthogonal polynomials of the first and second kind for $(a_n,b_n)_{n=1}^\infty$. Also,
%\begin{equation}
%\Delta(z)=\frac{1}{a_1\ldots a_p} \prod_{j=1}^p (z-b_j)+O(z^{p-2})
%\end{equation}
%holds.

It turns out that the spectrum of a two-sided periodic Jacobi matrix is purely absolutely continuous of multiplicity two, and
\begin{equation*}
\sigma((a_n,b_n)_{n\in\bbZ})=\Delta^{-1}([-2,2]).
\end{equation*}
Essential spectrum of a one-sided periodic Jacobi matrix is purely absolutely continuous of multiplicity one and we still have
\begin{equation*}
\sigma_{ess}((a_n,b_n)_{n=1}^\infty)=\Delta^{-1}([-2,2]).
\end{equation*}

In fact, $\Delta^{-1}([-2,2])$ is a finite gap set
\begin{equation}\label{p4_e}
%\Delta^{-1}([-2,2])=
\Delta^{-1}([-2,2])= \bigcup_{j=1}^p [\alpha_j,\beta_j] \equiv \fre , \quad \alpha_j<\beta_j \le\alpha_{j+1},
\end{equation}
where these intervals are allowed to touch. If some two intervals do touch $\beta_j=\alpha_{j+1}$, then this gap $[\beta_j,\alpha_{j+1}]$ is said to be \textit{closed}, and otherwise it is \textit{open}. Let $g$ be the number of open gaps (in other words, $\fre$ consists precisely of $g+1$ \textit{disjoint} closed intervals), which is consistent with the notation in the previous section.

Unlike the two-sided Jacobi matrices, the one-sided ones may have some point spectrum: $\sigma((a_n,b_n)_{n=1}^\infty)\setminus \Delta^{-1}([-2,2])$ may consist of up to $g$ eigenvalues, at most one per each open gap.

It turns out that if there exists at least one periodic Jacobi matrix $\calJ$ with $\sigma_{ess}(\calJ)=\fre$, then there exists a whole set of periodic Jacobi matrices satisfying the same property. In fact, this set is homeomorphic to $(S^1)^g$, a $g$-dimensional torus. This motivates the following definition.

\begin{definition}\label{torus}
The {isospectral torus} $\calT_\fre$  of  $\fre$ is the set of periodic Jacobi matrices $\calJ$ with $\sigma_{ess}(\calJ)=\fre$.
\end{definition}

We will view $\calT_\fre$ as a set of one-sided  $(a_n,b_n)_{n=1}^\infty$
 or two-sided  $(a_n,b_n)_{n\in\bbZ}$ matrices, depending on the context.

Denote $\rho_\fre$ to be the equilibrium (harmonic) measure of $\fre$.

There is an easy criterion for determining when a finite gap set $\fre$ is the (essential) spectrum of some periodic Jacobi matrix.

\begin{lemma} Let $\fre$ be a finite gap set \eqref{p4_e}.
\begin{list}{}
{
\setlength\labelwidth{2em}%
\setlength\labelsep{0.5em}%
\setlength\leftmargin{2.5em}%
}
\item[(a)] $\fre$ is the essential spectrum of some periodic Jacobi matrix if and only if the equilibrium measure of each of the $g+1$ disjoint intervals of $\fre$ is rational.
\item[(b)] $\fre$ is the essential spectrum of some $p$-periodic Jacobi matrix with all gaps open if and only if the equilibrium measures of each of the $p=g+1$ disjoint intervals of $\fre$ are equal $($and so equal to $1/p)$.
\end{list}
\end{lemma}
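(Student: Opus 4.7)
The plan is to route both directions of both parts through the discriminant polynomial $\Delta$ from~\eqref{p4_discr}. Recall that every $p$-periodic Jacobi matrix $\calJ$ satisfies $\sigma_{ess}(\calJ) = \Delta^{-1}([-2,2])$ with $\deg \Delta = p$, so it suffices to understand how the equilibrium measure of $\fre$ is inherited from that of $[-2,2]$ under the polynomial cover $\Delta$, and then to invoke the inverse spectral theory that reconstructs $\calJ$ from $\Delta$.

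For the necessity in (a), suppose $\fre = \sigma_{ess}(\calJ)$ for some $p$-periodic $\calJ$ with discriminant $\Delta$. Since $\Delta : \fre \to [-2,2]$ is a proper surjective map of degree $p$ branched only over $\pm 2$, the Green's functions with pole at $\infty$ satisfy $G_\fre(x, \infty) = \tfrac{1}{p}\, G_{[-2,2]}(\Delta(x), \infty)$, and taking boundary normal derivatives yields the density
\[
d\rho_\fre(x) = \frac{|\Delta'(x)|}{p\,\pi\sqrt{4-\Delta(x)^2}}\,dx
\]
on $\fre$. Over the $j$-th band $[\alpha_j,\beta_j]$ the map $\Delta$ decomposes into $k_j$ monotonic pieces, each mapping bijectively onto $[-2,2]$; substituting $y = \Delta(x)$ piecewise and using $\int_{-2}^{2} \frac{dy}{\pi\sqrt{4-y^2}} = 1$ gives $\rho_\fre([\alpha_j,\beta_j]) = k_j/p \in \bbQ$, with $\sum_j k_j = p$.

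For the sufficiency in (a), assume each $\rho_\fre([\alpha_j,\beta_j])$ is rational and pick a common denominator $p$, writing $\rho_\fre([\alpha_j,\beta_j]) = k_j/p$ with $k_j \in \bbN$ and $\sum_j k_j = p$. Classical results on polynomial preimages of intervals (going back to Akhiezer; see~\cite[Ch.~5]{Rice}) produce a degree-$p$ polynomial $\Delta$ with $\Delta^{-1}([-2,2]) = \fre$ having exactly $k_j$ monotonic branches over the $j$-th band. The inverse spectral theory for periodic Jacobi matrices (again~\cite[Ch.~5]{Rice}) then realizes any such $\Delta$ as the discriminant of some $p$-periodic $\calJ$, and this $\calJ$ has $\sigma_{ess}(\calJ) = \Delta^{-1}([-2,2]) = \fre$ by construction.

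Part (b) follows immediately via the identity $\sum_j k_j = p$. If all gaps are open then there are exactly $g+1$ bands, each carrying at least one monotonic branch of $\Delta$, and $\sum_j k_j = p$ with $k_j \ge 1$ forces $g+1 = p$ and every $k_j = 1$, so each band has equilibrium measure $1/p$. Conversely, if every band measure equals $1/p$, part (a) yields a $p$-periodic matrix; then $k_j = p \cdot \rho_\fre([\alpha_j,\beta_j]) = 1$ for all $j$, and since the $g+1$ bands sum their $k_j$'s to $p$, we get $g+1 = p$, i.e., all gaps open. The only substantive technical input is the existence of a polynomial $\Delta$ of degree $p$ with the prescribed monotonic covering structure on $\fre$, a classical but nontrivial result from the theory of Chebyshev-like polynomials on unions of intervals; once it is in hand, both parts reduce to bookkeeping of monotonic pieces.
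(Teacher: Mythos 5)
The paper does not prove this lemma: it is stated as background at the start of Section~2.2 with a blanket pointer to \cite[Ch.~5]{Rice}, so there is no internal proof to compare against. Your argument is the standard potential-theoretic one, and it is essentially correct. The key identity $g_\fre(z) = \tfrac{1}{p}\,g_{[-2,2]}(\Delta(z))$ holds because both sides are harmonic off $\fre$, vanish on $\fre$, and the leading coefficient $|c_0| = \operatorname{Cap}(\fre)^{-p}$ of $\Delta$ (recorded in the paper after \eqref{disc_fact}) makes the $\log|z|$ normalizations at infinity agree; differentiating normal to $\fre$ gives your density formula, and integrating over each band yields $\rho_\fre([\alpha_j,\beta_j]) = k_j/p$ with $\sum_j k_j = p$. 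The sufficiency direction correctly leans on the two genuinely nontrivial classical inputs (existence of the degree-$p$ ``$T$-polynomial'' with $\Delta^{-1}([-2,2])=\fre$ given rational band measures with common denominator $p$, and the inverse spectral theory realizing any such $\Delta$ as a discriminant of a nonempty isospectral torus), which is exactly what \cite[Ch.~5]{Rice} supplies, so the citation is appropriate rather than a gap.

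One small repair is needed in part (b). You write that ``$\sum_j k_j = p$ with $k_j \ge 1$ forces $g+1 = p$,'' but by itself this only gives $g+1 \le p$. The correct statement is that ``all gaps open'' for a $p$-periodic matrix means, by definition, that the $p$ monotone intervals of $\Delta^{-1}([-2,2])$ are pairwise disjoint; hence each of the $g+1$ disjoint intervals of $\fre$ is a single monotone interval, so $k_j = 1$ for every $j$ and therefore $g+1 = p$. The converse paragraph of (b), deducing $k_j = 1$ and then $g+1 = p$ from equal band measures via part (a), is fine as written. With that one phrase tightened, the bookkeeping is airtight.
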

Note that (a) in the above lemma should be thought of as $p$ intervals of equal equilibrium measure, some of which may touch. So in a sense (which can be made rigorous), (b) in the generic subcase of (a).

As a side remark, if at least one of the $g+1$ disjoint intervals of $\fre$ has irrational equilibrium measure, then one can construct an \textit{almost periodic} Jacobi matrix with essential spectrum $\fre$. We will not be discussing them in this paper (see \cite[Chapt~9]{Rice} for more information).

Now let $\mu$ be the spectral measure of a periodic one-sided Jacobi matrix $\calJ=(a_n,b_n)_{n=1}^\infty$ with respect to the vector $\delta_1$, and let $m$ be its Borel transform \eqref{intro1}. Using the recursion-type relation \eqref{m_recur} and the periodicity of $\calJ$, one can easily obtain that $m$ satisfies a certain quadratic equation. In fact (see Lemma~\ref{discriminant}(ii)),
\begin{equation*}\label{p4_ee1.28}
m(z)=\frac{r(z)\pm \sqrt{\Delta^2(z)-4} }{t(z)}.
\end{equation*}
%where we choose the branch of the square root $\sqrt{\Delta^2(z)-4}=\Delta(z)+O(1/\Delta(z))$.
Here $r(z),t(z)$ are some polynomials in $z$. Comparing this with \eqref{p4_e}, one now sees that $m$ has a meromorphic continuation to the full  surface $\calS_\fre$, the genus $g$ hyperelliptic surface constructed in Definition \ref{s}.

Our aim is to show that spectral measures of exponentially decaying perturbations of periodic Jacobi matrices have Borel transforms $m$ that can be meromorphically continued from $\calS_+$ to a portion of $\calS_-$. In fact, up to some poles/zeros constraints, these are the only measures that have this property.

\end{subsection}
\end{section}
%%%%%%%%%%%%%%%%%%%%%%%%%%%%%%%%%%%%%%%%%%%%%%%%%%%%%%%%%%%%%%%%%%%%%%%%%%%%%%%%%%%%
%%%%%%%%%%%%%%%%%%%%%%%%%%%%%%%%%%%%%%%%%%%%%%%%%%%%%%%%%%%%%%%%%%%%%%%%%%%%%%%%%%%%%

\begin{section}{Results}\label{Results}

Let $\mathfrak{e}=\cup_{j=1}^p [\alpha_j,\beta_j]$, $\alpha_j<\beta_j<\alpha_{j+1}$, be such that each $[\alpha_j,\beta_j]$ has equal equilibrium measure (``open gaps case'').

Assume $\esssup\mu=\mathfrak{e}$, and let $m(z)=\int_\bbR \frac{d\mu(x)}{x-z}$.

Denote $\Delta$ to be the unique polynomial of degree $p$ such that $\mathfrak{e}=\Delta^{-1}[-2,2]$ (its existence follows from the discussion in Section~\ref{Prelim_Periodic}). Let $x(z)=z+z^{-1}$. For each $R>1$, let
\begin{equation*}\label{E_R}
\calS_R=\calS_+\cup \pi^{-1}(E_R),
\end{equation*}
where $E_R$ is the union of the interiors of the bounded components of the set $\Delta^{-1}(x(R\,\partial\bbD))$.

\begin{theorem}\label{p4_th_merom}
%Let $\mathfrak{e}=\cup_{j=1}^p [\alpha_j,\beta_j]$, $\alpha_j<\beta_j<\alpha_{j+1}$, is such that each $[\alpha_j,\beta_j]$ has equal equilibrium measure (``open gaps case'').

%Assume $\esssup\mu=\mathfrak{e}$, and let $m(z)=\int_\bbR \frac{d\mu(x)}{x-z}$.
Let $R>1$. The following are equivalent:
\begin{list}{}
{
\setlength\labelwidth{2em}%
\setlength\labelsep{0.5em}%
\setlength\leftmargin{2.5em}%
}
%\item[(i)] The associated  to $\mu$ Jacobi matrix $\calJ$ satisfies
%  \begin{equation*}
%  \limsup_{n\to\infty} (d_n(\calJ,\calT_{\mathfrak{e}}))^{1/2n} \le R^{-1},
%  \end{equation*}
%  where $\calT_\fre$ is the isospectral torus corresponding to $\fre$.
\item[(i)] The Jacobi matrix $(a_n,b_n)_{n=1}^\infty$ associated  with $\mu$  satisfies
  \begin{equation*}
  \limsup_{n\to\infty} \left(|a_n-a_n^{(0)}| + |b_n-b_n^{(0)}|\right)^{1/2n}  \le R^{-1},
  \end{equation*}
  where $\big(a_n^{(0)},b_n^{(0)}\big)_{n=1}^\infty$ is a periodic Jacobi matrix from $\calT_\fre$.
  %with $\sigma_{ess}((a_n^{(0)},b_n^{(0)})_{n=1}^\infty)=\fre$ $($i.e, $(a_n^{(0)},b_n^{(0)})_{n=1}^\infty \in\calT_\fre)$.
\item[(ii)]
  \begin{itemize}
  \item[(a)] $m$ has a meromorphic continuation to $\calS_R$; %$\calS_-\cap E_R$,
  \item[(b)] $m$ has no poles on $\pi^{-1}(\fre)$, except at $\pi^{-1}(\cup_{j=1}^{p}\{\alpha_j,\beta_j\})$, where they are at most simple;
  %$\mu$ has no pure points in $\fre$,
  \item[(c)] $m(z)-m^\sharp(z)$ has no zeros in $\pi^{-1}(E_R)$, except at $\pi^{-1}(\cup_{j=1}^{p}\{\alpha_j,\beta_j\})$, where they are at most simple; %(in particular $m(z_+)=m(z_-)=\infty$ in not allowed).
  \item[(d)] If $m$ has a pole at $z$ for $z\in \pi^{-1}(E_R\setminus \fre)$ then $z^\sharp$ is not a pole of $m$.
  \end{itemize}
\end{list}
\end{theorem}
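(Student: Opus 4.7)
The strategy, sketched by the author, is to reduce Theorem~\ref{p4_th_merom} to the matrix-valued equivalence of Lemma~\ref{p4_lm_M} via the Damanik--Killip--Simon Magic Formula (Lemma~\ref{magic}). Partitioning consecutive indices into blocks of length $p$, I view the scalar one-sided Jacobi matrix $\calJ=(a_n,b_n)_{n=1}^\infty$ as a $p\times p$ matrix-valued Jacobi matrix on $\ell^2(\bbN,\bbC^p)$. The Magic Formula asserts that for any $\calJ^{(0)}\in\calT_\fre$ the operator $\Delta(\calJ^{(0)})$ in this blocked picture has constant block coefficients for $n\ge 2$ (with a finite-rank boundary perturbation at $n=1$), so that it is precisely of the form covered by Lemma~\ref{p4_lm_M}. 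The scalar $m$ on $\calS_\fre$ and the matrix-valued Borel transform $M_\Delta$ of $\Delta(\calJ)$ on $\bbD$ are then related through the $p$-to-$1$ ramified cover $\Delta:\calS_\fre\to\calS_{[-2,2]}$ composed with the uniformization $z\mapsto z+z^{-1}$ of $\calS_{[-2,2]}$ by the Riemann sphere; the region $E_R$ defining $\calS_R$ is, by construction, precisely the image on $\calS_\fre$ of $\{1<|z|<R\}$ under this dictionary.

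For (i)$\Rightarrow$(ii), I observe that each block coefficient of $\Delta(\calJ)$ is a polynomial in finitely many $a_k,b_k$ in a window of size $O(p)$ around the given index, so an exponential perturbation $(a_n-a_n^{(0)})+(b_n-b_n^{(0)})=O(R^{-n})$ induces an exponentially small perturbation of the block coefficients of $\Delta(\calJ)$ at the same rate. Lemma~\ref{p4_lm_M} applied to $M_\Delta$ then yields the meromorphic continuation of $M_\Delta(z+z^{-1})$ to $\{|z|<R\}$ together with the matrix-valued pole/zero constraints, and these pull back under the dictionary to conditions (ii)(a)--(d) on $m$ over $\calS_R$. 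For (ii)$\Rightarrow$(i), the same dictionary translates (a)--(d) on $m$ into the hypotheses of Lemma~\ref{p4_lm_M} for $M_\Delta(z+z^{-1})$ on $\{|z|<R\}$; Lemma~\ref{p4_lm_M} then produces constants $(A^{(0)},B^{(0)})$ such that the block coefficients of $\Delta(\calJ)$ are exponentially close to them at rate $R^{-1}$, and the Magic Formula identifies these as coming from a unique $\calJ^{(0)}\in\calT_\fre$. A final ``inversion of $\Delta$'' argument converts block-level exponential decay back to exponential decay of $(a_n-a_n^{(0)})+(b_n-b_n^{(0)})$: the extreme off-diagonal entry $(\Delta(\calJ))_{n,n+p}$ equals $\prod_{k=n}^{n+p-1}a_k$, so the individual $a_n$ and then $b_n$ can be extracted inductively from this and neighboring entries.

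The principal obstacle is the correct matching of pole/zero degeneracies at the ramification points $\pi^{-1}(\{\alpha_j,\beta_j\})$ of $\Delta:\calS_\fre\to\calS_{[-2,2]}$ with those at $\pm 1\in\partial\bbD$. Each band edge projects to $\pm 2$ and thence to $\pm 1$, so a simple pole of $m$ at such an edge should correspond to a simple pole of $M_\Delta(z+z^{-1})$ at $\pm 1$, and likewise for simple zeros of $m-m^\sharp$ versus $M_\Delta(z+z^{-1})-M_\Delta^\sharp(z+z^{-1})$. Obtaining the precise multiplicity count requires a local uniformizer computation at the branch points on both sides, leaning on the algebraic identity $m(\zeta)=(r(\zeta)\pm\sqrt{\Delta(\zeta)^2-4})/t(\zeta)$ from Section~\ref{Prelim_Periodic}. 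A secondary technical concern is that the rate $R^{-1}$ be preserved, and not degraded to $R^{-1/p}$ or $R^{-p}$, under the passage between scalar and block indices; this is a bookkeeping matter controlled by the fact that $\Delta$ has fixed degree $p$ and is handled by tracking the $\limsup^{1/(2n)}$ normalization consistently on both sides.
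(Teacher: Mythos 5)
Your high-level strategy — reduce to the matrix-valued Lemma~\ref{p4_lm_M} via the Magic Formula and the trace correspondence between $m$ on $\calS_\fre$ and $\mathfrak{m}_\Delta$ on $\calR$ — is the same as the paper's, and the observation that Lemma~\ref{p4_DKS} must handle the rate bookkeeping is correct. But the ``dictionary'' you invoke to pass conditions (b)--(d) back and forth across the $p$-to-$1$ cover is not a simple pullback, and filling in what this dictionary actually does is where essentially all the work of the proof lies. Several of these gaps are not bookkeeping.

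First, the translation of (ii)(c) on $m$ to (II)(C) on $\mathfrak{m}_\Delta$ fails without a prior normalization of the pole structure. The trace identity (Lemma~\ref{p4_may2}) gives
$\left[\mathfrak{m}_\Delta-\mathfrak{m}^\sharp_\Delta\right]\mathfrak{U}(\lambda)=\mathfrak{L}(\lambda)$
where $\mathfrak{U}(\lambda)=S_{11}+\mathfrak{p}_1(\lambda)S_{21}$ becomes singular precisely at the critical values $\lambda=\Delta(\gamma_j)$, and $\mathfrak{L}(\lambda)$ vanishes there as well. To conclude that $(\mathfrak{m}_\Delta-\mathfrak{m}^\sharp_\Delta)^{-1}$ is \emph{regular} at these points you must show the kernels of $\mathfrak{U}$ and $\mathfrak{L}$ agree and the poles cancel, which the paper does via an explicit Jordan-block computation for $\mathfrak{U}(\lambda_0)^*$ (Lemmas~\ref{perturb} and~\ref{p4_may5}) and a Vandermonde-type determinant evaluation for $\mathfrak{L}$ (Lemma~\ref{p4_may6}). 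Your proposal has no mechanism for this cancellation. Similarly, if $m$ has a pole at one preimage $\widetilde{f}_n(\lambda_0)$ but not the others, $\mathfrak{m}_\Delta-\mathfrak{m}_\Delta^\sharp$ genuinely has a pole at $\lambda_0$, and you need a Smith--McMillan rank argument to show that its inverse is nonetheless regular; this is not a pullback of (ii)(c).

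Second, you omit the normalization step (Lemma~\ref{p4_may4}): the argument above and the verification of (II)(D) are only clean if $m$ has no poles at critical preimages or band edges (P1) and no two poles of $m$ in $\calS_{R-\veps}$ lie over the same point of $\calR$ (P2). The paper achieves this by prepending two rows/columns with carefully chosen $a_0,b_0,a_{-1},b_{-1}$ and checking that conditions (ii)(a)--(d) are stable under this operation. Without this reduction, condition (ii)(d) (``no simultaneous poles at $z$ and $z^\sharp$'') does not translate into the much stronger matrix-valued condition (II)(D).

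Third, in the direction (i)$\Rightarrow$(ii), conditions (ii)(c) and (ii)(d) do not come for free once you have (II)(A)--(D) for $\mathfrak{m}_\Delta$. The paper proves them by a convergence argument: it shows $m^{(np)}\to m^0$ uniformly on compacts of $\calS_R$ (Lemma~\ref{p4_convergence2}), verifies directly that the periodic $m^0$ (given by~\eqref{m_periodic}) satisfies (ii)(c),(d), and then uses the coefficient-stripping recursion to propagate this to $m$. You do not address this at all, and it is not a consequence of the ``dictionary.'' The band-edge multiplicity matching you flag as an ``obstacle'' is likewise left unresolved; the paper handles it with a left-null-function argument (Definition~\ref{null_pole}, Lemma~\ref{p4_null-pole}) rather than with the local-uniformizer computation you suggest, though the latter might in principle also work.

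Finally, your closing ``inversion of $\Delta$'' from block decay back to scalar decay is not the paper's route and would require you to independently prove that the limiting scalar sequence lies in $\calT_\fre$; the paper gets (I)$\Rightarrow$(i) from Lemma~\ref{p4_DKS}, which itself needs the $\limsup d_n(\calJ,\calT_\fre)^{1/2n}$ minimization argument of Damanik--Killip--Simon. In summary: right skeleton, but the flesh — kernel cancellations via perturbation theory, the (P1)/(P2) reduction, and the convergence argument for (ii)(c),(d) — is missing.
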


\begin{theorem}\label{p4_th_finite_merom}
%Let $\mathfrak{e}=\cup_{j=1}^p [\alpha_j,\beta_j]$, $\alpha_j<\beta_j<\alpha_{j+1}$, is such that each $[\alpha_j,\beta_j]$ has equal equilibrium measure (``open gaps case'').
%Assume $\esssup\mu=\mathfrak{e}$, and let $m(z)=\int_\bbR \frac{d\mu(x)}{x-z}$.
The following are equivalent:
\begin{list}{}
{
\setlength\labelwidth{2em}%
\setlength\labelsep{0.5em}%
\setlength\leftmargin{2.5em}%
}
\item[(i)] The Jacobi matrix $(a_n,b_n)_{n=1}^\infty$ associated  with $\mu$  is eventually periodic, i.e., satisfies
  \begin{equation*}
%  a_n=a_{n+p}, \quad b_n=b_{n+p} \quad \mbox{ for all large }n,
(a_n,b_n)_{n=N}^\infty\in\calT_\fre \quad \mbox{ for  large }N.
  \end{equation*}
%  such that $(a_n,b_n)_{n=N}^\infty\in\calT_\fre$ for large $N$.
\item[(ii)]
%\hspace*{\fill}
  \begin{itemize}
%  \setlength{\itemindent}{2em}
%  \item[(a)]\begin{itemize}
%  \item[(ii)]
%  \item[(pp)]
%  \end{itemize}
  \item[(a)] $m$ has a meromorphic continuation to $\calS_\fre$;
  \item[(b)] $m$ has no poles on $\pi^{-1}(\fre)$, except at $\pi^{-1}(\cup_{j=1}^{p}\{\alpha_j,\beta_j\})$, where they are at most simple;
  %$\mu$ has no pure points in $\fre$,
  \item[(c)] $m(z)-m^\sharp(z)$ has no zeros in $\calS_\fre\setminus\{\pm\infty\}$, except at $\pi^{-1}(\cup_{j=1}^{p}\{\alpha_j,\beta_j\})$, where they are at most simple;
  \item[(d)] If $m$ has a pole at $z$ for $z\in \pi^{-1}(\bbC\setminus \fre)$ then $z^\sharp$ is not a pole of $m$.
  \end{itemize}
%$$
%\limsup_{n\to\infty} (d_n((a,b),\calT_\mathfrak{e}))^{1/2n}\le R^{-1}.
%$$
\end{list}
\end{theorem}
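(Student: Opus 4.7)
My proof plan follows the same reduction used for Theorem~\ref{p4_th_merom}, with the exponential-decay matrix lemma (Lemma~\ref{p4_lm_M}) replaced by its eventual-freeness counterpart (Lemma~\ref{p4_lm_M_finite}). For any $\calJ_0\in\calT_\fre$, the Damanik--Killip--Simon Magic Formula (Lemma~\ref{magic}) yields $\Delta(\calJ_0)=S^p+S^{-p}$, where $S$ is the shift; in $p\times p$ block form this is the free matrix-valued Jacobi matrix. Accordingly, I let $\calJ=(a_n,b_n)_{n=1}^\infty$ be the Jacobi matrix of $\mu$, form $\Delta(\calJ)$, and partition it into $p\times p$ blocks to obtain a matrix-valued Jacobi matrix $\widetilde\calJ$; the first output of the Magic Formula is that $\calJ$ is eventually in $\calT_\fre$ if and only if $\widetilde\calJ$ is eventually free.

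Next I would apply Lemma~\ref{p4_lm_M_finite} to $\widetilde\calJ$. This lemma characterizes eventual freeness of a matrix-valued Jacobi matrix by meromorphic extendibility of $\widetilde M(z):=\widetilde m(z+z^{-1})$ to all of $\bbC\cup\{\infty\}$, together with matrix-valued analogues of (b)--(d) imposed at the ramification points $\pm 1$ and at $\sharp$-paired points. The matrix-valued spectral measure of $\widetilde\calJ$, and hence $\widetilde M$, are determined by $\mu$ via the blocking procedure, so the combined output of these two steps is an equivalence of (i) with an explicit set of conditions on $\widetilde M$.

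The heart of the proof is to convert these conditions on $\widetilde M$ into (a)--(d) for $m$ on $\calS_\fre$. This is done through the covering map $z\mapsto \Delta^{-1}(z+z^{-1})$ from $\bbD$ to $\calS_+$ (with appropriate choices of branches): it sends $\partial\bbD$ to $\pi^{-1}(\fre)$, the two points $\pm 1$ to the branch points $\pi^{-1}(\cup_j\{\alpha_j,\beta_j\})$, continuation across $\partial\bbD$ into $\bbC\setminus\overline{\bbD}$ to continuation across $\pi^{-1}(\fre)$ into $\calS_-$, and the Schwarz-type reflection $z\mapsto \bar{z}^{-1}$ to the involution $\sharp$. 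Since $\mu$ is the spectral measure of $\calJ$ relative to $\delta_1$, the scalar $m$ is recovered from $\widetilde M$ as the distinguished entry singled out by the DKS basis change, so each of the four matrix-valued conditions on $\widetilde M$ translates into the corresponding scalar condition on $m$.

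The main obstacle is the careful bookkeeping of multiplicities at branch points under the ramified covering $z\mapsto \Delta^{-1}(z+z^{-1})$, which has ramification of order $2$ over $\cup_j\{\alpha_j,\beta_j\}$. One must verify that the ``at most simple'' clauses in (b) and (c) match exactly the corresponding matrix-valued ``at most simple at $\pm 1$'' conditions, neither doubling nor halving multiplicities, and likewise that (d) is preserved under the passage. The all-gaps-open hypothesis (equal equilibrium measure of each band) is precisely what ensures that $\Delta$ has degree $p$ equal to the number of bands and that $\Delta^{-1}(\{\pm 2\})=\cup_j\{\alpha_j,\beta_j\}$ accounts for all $2p$ branch points uniformly, so the multiplicity match is well defined across the whole of $\pi^{-1}(\fre)$. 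Beyond this, one should also check that passing between the full matrix $\widetilde M$ and its scalar entry $m$ neither creates nor destroys poles or zeros of $m-m^\sharp$; this follows from the cyclicity of the vector $\delta_1$ in the DKS construction.
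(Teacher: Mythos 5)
Your high-level plan is the same as the paper's: reduce to the matrix-valued problem via the Damanik--Killip--Simon Magic Formula, invoke Lemma~\ref{p4_lm_M_finite} to characterize eventual freeness of $\Delta(\calJ)$, and then transfer the four analytic conditions back and forth between $\mathfrak{m}_\Delta$ and $m$. But the crucial transfer step, as you describe it, does not work. You assert that ``$m$ is recovered from $\widetilde M$ as the distinguished entry singled out by the DKS basis change,'' so that each matrix-valued condition on $\mathfrak{m}_\Delta$ translates directly into the corresponding scalar condition on $m$. That is false: the scalar $m(z)=\int (x-z)^{-1}d\mu(x)$ is not an entry of the $p\times p$ function $\mathfrak{m}_\Delta(\lambda)$, and in fact no entry or fixed linear functional of $\mathfrak{m}_\Delta$ gives $m$. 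The genuine relation, which is Lemma~\ref{p4_may1} in the paper, reads
\begin{equation*}
\sum_{j=1}^p \Bigl[\,q_{k-1}p_{s-1}+p_{k-1}p_{s-1}\,m\,\Bigr]_{k,s=1}^p(\widetilde f_j(\lambda))
\;=\;\mathfrak m_\Delta(\lambda)\bigl(S_{11}+\mathfrak p_1(\lambda)S_{21}\bigr)+A_1^{-1}{}^*S_{21},
\end{equation*}
a sum over all $p$ preimages $\widetilde f_j(\lambda)$ of $\lambda$ under $\widetilde\Delta$, composed with multiplication by the matrix $\mathfrak U(\lambda)=S_{11}+\mathfrak p_1(\lambda)S_{21}$, which is itself singular precisely at the images $\Delta(\gamma_j)$ of the critical points of $\Delta$. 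Likewise, recovering $m$ from $\mathfrak m_\Delta$ (the direction needed for (ii)(a)) uses Lemma~\ref{p4_may0}, again not an entry extraction.

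Because of this, the transfer of conditions (b)--(d) is not a bookkeeping of multiplicities under a covering map, as you suggest; it is the main technical content of the proof. One needs (i) the determinant and kernel description of $\mathfrak U(\lambda)$ (Lemma~\ref{p4_may5}), established through an analytic perturbation argument on Jordan structure; (ii) the analogous description of $\mathfrak L(\lambda)$ defined from $m-m^\sharp$ (Lemma~\ref{p4_may6}); (iii) the identity $\det(\mathfrak m_\Delta-\mathfrak m_\Delta^\sharp)=\text{const}\cdot\prod_j\bigl(m(\widetilde f_j(\lambda))-m^\sharp(\widetilde f_j(\lambda))\bigr)$ (Lemma~\ref{p4_may3}); (iv) a Smith--McMillan analysis at points where some $\widetilde f_j(\lambda)$ is a pole of $m$ or $m^\sharp$; and (v) a preliminary normalization (Lemma~\ref{p4_may4}), prepending two rows/columns to $\calJ$, to ensure that distinct poles of $m$ never sit over the same $\lambda$ and never coincide with $\Delta^{-1}(\Delta(\gamma_j))$ or the band edges, which is what makes the matrix residue-range conditions in (II)(D) of Lemma~\ref{p4_lm_M_finite} vacuous and reduces them to the scalar (ii)(d). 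Your mention of ``cyclicity of $\delta_1$'' does not substitute for any of this. Also, the relevant ramified covering is $\widetilde\Delta:\calS_\fre\to\calR_{[-2,2]}$ between the two Riemann surfaces, not a map $\bbD\to\calS_+$; ramification happens over $\pm2\in\calR$, with the $2p$ band edges sitting above them. The paper's actual proof of this theorem is short only because it reuses verbatim the heavy machinery built for Theorem~\ref{p4_th_merom}; your sketch omits essentially all of that machinery.
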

\begin{remarks}
1. Theorems \ref{p4_th_merom}, \ref{p4_th_finite_merom} for $p=1$ and \cite[Thm 3.8, 3.9]{K_jost} (see Lemmas~\ref{p4_lm_M}, \ref{p4_lm_M_finite}) for $l=1$ are identical.

2. $R=\infty$ in Theorem \ref{p4_th_merom} is, in fact, allowed, and this case is \textit{not} the same as Theorem \ref{p4_th_finite_merom}.

3. Let us try to understand conditions (a) through (d) in terms of the properties of the measure $\mu$. Condition (b) just says that $\mu$ has no pure points on $\fre$ (see Lemma~\ref{p4_ges}). % From (c) it follows that $\imag m(z)\ne0$ for $z$ in the interior of $\fre$, and that at the edges $m(z)- m^\sharp(z)$ has at most first order  zeros. Comparing these facts with Lemma~\ref{p4_ges}, this means that the  density $f(x)=\frac{d\mu}{dx}$ of $\mu$ is non-vanishing on $\fre$ except at the edges where it might be square root vanishing (recall that local coordinates of $\calS_\fre$ at the edges of $\fre$ are given in terms of $\sqrt{z-z_0}$, not $z-z_0$).
By the discussion after Lemma \ref{p4_prop}, the conditions (a) and (c) imply that $\mu$ has no singular continuous part; the absolutely-continuous density $f(x)=\frac{d\mu}{dx}$ has a meromorphic continuation to $\pi^{-1}(E_R)$, where it is non-vanishing except possibly the first order zeros at the band edges (recall that local coordinates of $\calS_\fre$ at the edges of $\fre$ are given in terms of $\sqrt{z-z_0}$, not $z-z_0$). However it is not so simple to express the condition (d) in terms of the properties of $\mu$ alone, since it %that $m$ has no symmetric poles
is influenced by both the absolutely continuous and pure point parts of $\mu$.

%3. Instead of demanding (d) to hold for $z\in \pi^{-1}(\bbC\setminus \fre)$ one could demand it also for the points  $z\in \pi^{-1}(\fre\setminus \cup_{j=1}^{p}\{\alpha_j,\beta_j\})$, which, given the convention $z^\sharp=z$ ($z\in\pi^{-1}(\fre)$), would simply mean that $m$ has no pole at these points. %This is equivalent to saying that there is no point mass in the interior of $\fre$.
%(b) however also demands that the poles at the band edges are at most simple. %there is no point mass at the edges of $\fre$ ($\lim_{\veps\downarrow 0} \veps m(z_0+i\veps)=0$), which means that there can be a pole of order $1$ there, but not of order $2$.

4. Below is an example how $E_R$ evolves as $R$ grows (the picture was generated using Wolfram Mathematica 7.0). Using the results of \cite[Chapt~5]{Rice}, it is easy to see that $E_R$ are precisely the interiors of the level sets of the logarithmic potential of the equilibrium measure for $\fre$.

\medskip

%\medskip
%{\begin{center}
%\includegraphics[scale=0.59]{Fig2.png}
%\end{center}
%}

%temporarily commented out:
{
\begin{center}
\resizebox{0.8\textwidth}{!}{%
  \includegraphics{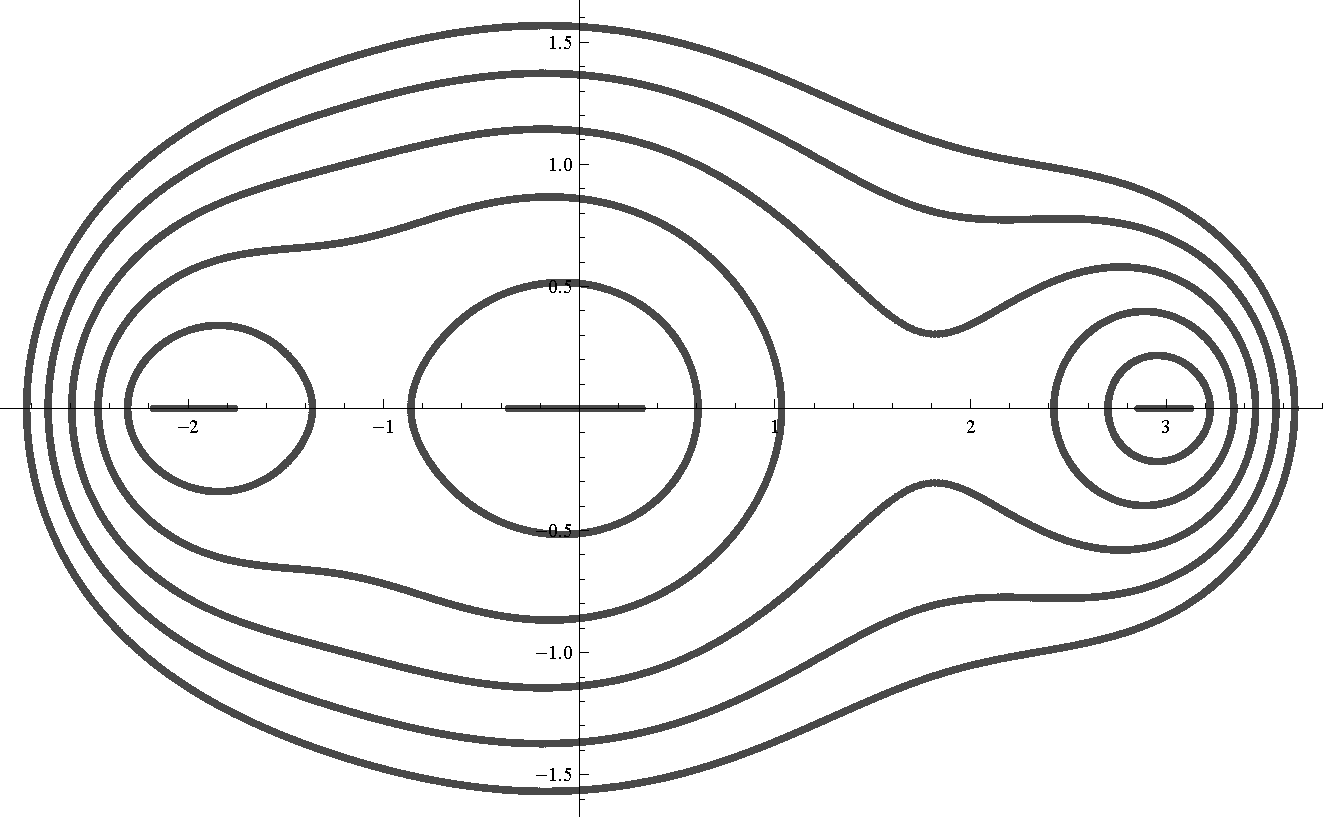}
}
\end{center}
}

\end{remarks}
\end{section}

%%%%%%%%%%%%%%%%%%%%%%%%%%%%%%%%%%%%%%%%%%%%%%%%%%%%%%%%%%%%%%%%%%%%%%%%%%%%%%%%%%%%%%%%%
%%%%%%%%%%%%%%%%%%%%%%%%%%%%%%%%%%%%%%%%%%%%%%%%%%%%%%%%%%%%%%%%%%%%%%%%%%%%%%%%%%%%%%%%%

\begin{section}{Proofs}\label{Proofs}

\begin{subsection}{Notation}\label{Proofs_Notation}

Let $\fre, \mu, m, \Delta, E_R$ be as in Section 3. Let $\calJ$ be the Jacobi matrix associated with $\mu$. As explained in Appendix~\ref{Appendix_Periodic}, $\Delta(\calJ)$ can be viewed as a block Jacobi matrix with $p\times p$ matrix entries.

Let $p_n(x)$, $q_n(x)$ be the orthonormal polynomials of the first and the second kind for $\calJ$ (see Appendix~\ref{Appendix_Orthogonal}), and $\mathfrak{p}_n(x)$, $\mathfrak{q}_n(x)$ be the right matrix-valued orthonormal polynomials of the first and the second kind for $\Delta(\calJ)$.

Denote by $\calS=\calS_\fre$ the (genus $p-1$) Riemann surface corresponding to $\fre$, and by $\calR=\calS_{[-2,2]}$ the (genus $0$) Riemann surface corresponding to $[-2,2]$ (i.e., the hyperelliptic surface corresponding to the polynomial $z^2-4$). We will denote both projections $\calS\to\bbC\cup\{\infty\}$ and $\calR\to\bbC\cup\{\infty\}$ by the same symbol $\pi$, in hopes that it should be unambiguous from the context.

Recall that $\calS_R=\calS_+\cup \pi^{-1}(E_R)$, where $E_R$ is the union of the interiors of the bounded components of $\Delta^{-1}(x(R\,\partial\bbD))$, where $x(z)=z+z^{-1}$. Denote $\calR_R=\calR_+\cup \pi^{-1}(F_R)$, where $F_R$ is the interior of the bounded component of $x(R\,\partial\bbD)$ (ellipse).

Let $\mu$ be the spectral measures for $\calJ$ with respect to $\delta_1=(1,0,0,\ldots)^T$. Let $\mu_\Delta$ be the $p\times p$ spectral measures of $\Delta(\calJ)$ with respect to $(\mathbf{1},\mathbf{0},\mathbf{0},\ldots)^T$.  Here $\mathbf{1}$, $\mathbf{0}$ are the $p\times p$ identity matrix and the $p\times p$ zero matrix, respectively.

Let $m$ be the Borel transform  of $\mu$. It is a meromorphic function on $\bbC\cup\{\infty\}\setminus \fre$. However we will view it as a meromorphic function on $\calS_+$ under the natural identification. Similarly let $\mathfrak{m}_\Delta$ be the $p\times p$ matrix-valued Borel transform  of $\mu_\Delta$. It is meromorphic on $\bbC\cup\{\infty\}\setminus [-2,2]$  by the spectral theorem. Indeed, $\Delta(\fre)=[-2,2]$. Again, we will view it as a meromorphic function on $\calR_+$.

As in Definition \ref{sharp}, let $z^\sharp$ be $\left(\overline{\pi(z)}\right)_-$ if $z\in \calS_+$ and $\left(\overline{\pi(z)}\right)_+$ if $z\in \calS_-$, with the convention  $z^\sharp=z$ for $z\in \pi^{-1}(\fre)$. Similarly, let $\lambda^\sharp$ be $\left(\overline{\pi(\lambda)}\right)_-$ if $\lambda\in \calR_+$ and $\left(\overline{\pi(z)}\right)_+$ if $\lambda\in \calR_-$, with the convention  $\lambda^\sharp=\lambda$ for $\lambda\in \pi^{-1}([-2,2])$. Let  $m^\sharp(z)=\overline{m(z^\sharp)}$ and  $\mathfrak{m}_\Delta^\sharp(\lambda)=\mathfrak{m}_\Delta(\lambda^\sharp){}^*$.

%Let  $m^\sharp(z_\pm)=\overline{m(\bar{z}_\mp)}$ and $m_\Delta^\sharp(\lambda_\pm)=m_\Delta(\bar{\lambda}_\mp)^*$. In order to make this continuous we make the convention that $\bar{z_-}=z_+$, $\bar{z_+}=z_-$ for $z\in\fre$. We will also sometimes write $z^\sharp$ to denote $\bar{z}_-$ if $z\in \calS_+$ and $\bar{z}_+$ if $z\in \calS_-$, and again, $z^\sharp=z$ for $\pi^{-1}(\fre)$ (and the same notation for $\lambda^\sharp$ with $\calS$ replaced by $\calR$).

Let $\{\gamma_j\}_{j=1}^{p-1}$ be the $p-1$ real solutions of $\Delta'(z)=0$ (they are indeed all real by Lemma \ref{discriminant}). Denote by %$\zeta_j=\Delta(\gamma_j)$, and
$\{\xi_j\}_{j=1}^N$  all of the preimages $\Delta^{-1}(\Delta(\gamma_j))$ (so the set $\{\xi_j\}_{j=1}^N$ contains all $\gamma_j$'s and finitely many of other points).

%It will be convenient to define $\calR_1\subset\calR$ as the union of $\calR_+\cap \pi^{-1}(\bbC_+)$, $\calR_-\cap \pi^{-1}(\bbC_-)$, and the interval $\pi^{-1}([-2,2])\cap\calR_+$ between them. Here $\bbC_+ =\{z: \imag z>0\}$, $\bbC_- =\{z: \imag z<0\}$. Similarly, let $\calR_2\subset\calR$ be the union of $\calR_-\cap \pi^{-1}(\bbC_+)$, $\calR_+\cap \pi^{-1}(\bbC_-)$, and the interval $\pi^{-1}([-2,2])\cap\calR_-$ between them. Clearly $\calR_1$, $\calR_2$ are simply-connected subsets of $\calR$ that have only $\pm2$ as common points.

Denote the $p$ inverse functions of $\Delta$ by $f_j$: $$\Delta(z)=\lambda \Rightarrow z=f_j(\lambda).$$ Initially we can define $f_j$ on $\bbC_+\cup \bbC_-\cup[-2,2]$ (the critical points of $\Delta$ are all in $(-\infty,-2)\cup(2,\infty)$), and then extend it to $(-\infty,-2)\cup (2,\infty)$ by demanding it to be continuous ``from above'', i.e., for $\lambda_0\in (-\infty,-2)\cup (2,\infty)$,
$$
f_j(\lambda_0)=\lim_{\bbC_+ \ni \lambda\to \lambda_0} {f}_j(\lambda).
$$
With this convention, we have that $f_j$ are functions defined everywhere on $\bbC$ with possible discontinuity only along $(-\infty,-2)\cup (2,\infty)$. Also note that for any $\lambda\in\bbC$ (including $(-\infty,-2)\cup (2,\infty)$), the set $\{f_j(\lambda): 1\le j\le p\}$ is equal to $\{z: \Delta(z)=\lambda\}$. In fact, if $\lambda\in\bbC\setminus\{ \Delta(\gamma_1),\ldots,\Delta(\gamma_{p-1}) \}$, then $f_j(\lambda)$ are all distinct for $j=1,\ldots, p$.

Counting zeros one can see that
\begin{equation}\label{disc_fact}
\Delta(z)-\lambda = c_0 \prod_{j=1}^p (z-f_j(\lambda))
\end{equation}
for some constant $c_0\in\bbR\setminus\{0\}$. In fact, $c_0 = \operatorname{Cap}(\fre)^{-p}$ (see \cite[Chapt 5]{Rice}), where $\operatorname{Cap}(\fre)$ stands for the logarithmic capacity of the set $\fre$. %We will not need this fact in the sequel.

Now let us ``lift'' the maps $f_j$. Define $\widetilde{f}_j$ to be the unique map $\calR \to \calS$  satisfying the conditions
\begin{align*}
\pi&\circ\widetilde{f}_j=f_j, \\
\widetilde{f}_j(\lambda) &\in \calS_+ \quad \mbox{if }\lambda\in \calR_+, \\
\widetilde{f}_j(\lambda) &\in \calS_- \quad \mbox{if }\lambda\in \calR_-.
\end{align*}
Note that each $\widetilde{f}_j$ is continuous everywhere except on $\pi^{-1}((-\infty,-2)\cup (2,\infty))$.

%Finally, extend $\widetilde{f}_j$ to $(-\infty,-2)\cup (2,\infty)$ on $\calR_+$ and $\calR_-$ by demanding it to be continuous ``from above'' (i.e., $\widetilde{f}_j(z_0)=\lim_{\calR_+ \cap \bbC_+ \ni z\to z_0} \widetilde{f}_j(z)$ for $z_0\in \calR_+\cap [(-\infty,-2)\cup (2,\infty)]$ and $\widetilde{f}_j(z_0)=\lim_{\calR_- \cap \bbC_+ \ni z\to z_0}\widetilde{f}_j(z)$ for $z_0\in \calR_-\cap [(-\infty,-2)\cup (2,\infty)]$). By doing this we are ensured that all $p$ of the preimages (counting multiplicities) $\Delta^{-1}(\lambda)$ are counted in by $\pi(\widetilde{f}_j(\lambda))$, $1\le j\le p$ for any $\lambda$.

Define $\widetilde{\Delta}:\calS\to\calR$ in the analogous way:
\begin{align*}
\pi&\circ\widetilde{\Delta}=\Delta, \\
\widetilde{\Delta}(z) &\in \calR_+ \quad \mbox{if }z\in \calS_+, \\
\widetilde{\Delta}(z) &\in \calR_- \quad \mbox{if }z\in \calS_-.
\end{align*}

Whenever we have any function  $g$ of complex variable, and $z\in\calS$, $\lambda\in\calR$, then we will occasionally write $g(z)$, $g(\lambda)$ instead of $g(\pi(z))$, $g(\pi(\lambda))$.

Throughout the paper, by a simple pole  of a matrix-valued meromorphic function $\mathfrak{m}(\lambda)$, we mean a point $\lambda_0$ where $\lim_{\lambda\to\lambda_0} (\lambda-\lambda_0)\mathfrak{m}(\lambda)$ exists and is a non-zero matrix.
%Similarly, $\lambda_0$ is a simple zero of a matrix-valued function $\mathfrak{m}(\lambda)$ if $\lambda_0$ is a simple pole of $\mathfrak{m}^{-1}$.
By a regular point of a function $\mathfrak{m}$, we mean a point $\lambda_0$ where $\lim_{\lambda\to\lambda_0}\mathfrak{m}(\lambda)$ exists.

\end{subsection}
%%%%%%%%%%%%%%%%%%%%%%%%%%%%%%%%%%%%%%%%%%%%%%%%%%%%%%%%%%%%%%%%%%%%%%%%%%%%%%%%%%%%%%%%%%%%%%%%%%5

\begin{subsection}{Lemmas}\label{Proofs_Lemmas}

\begin{lemma}\label{p4_may0}
For $\lambda\in \calR_+\setminus\pi^{-1}([-2,2])$,%$(\Delta(\calJ)-\lambda)^{-1} \prod_{j\ne l} (\calJ-\pi\circ f_j(\lambda))=(\calJ-f_l(\lambda))^{-1}$, in particular
\begin{equation}\label{p4_func}
m(\widetilde{f}_l(\lambda))=\left( c_0 (\Delta(\calJ)-\lambda)^{-1} \prod_{j\ne l} (\calJ-f_j(\lambda)) \, \delta_1,\delta_1
\right).
\end{equation}
\end{lemma}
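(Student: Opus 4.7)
The plan is to use the factorization \eqref{disc_fact} together with the functional calculus for $\calJ$ to recognize that the operator
\[
c_0\,(\Delta(\calJ)-\lambda)^{-1}\prod_{j\ne l}(\calJ - f_j(\lambda))
\]
is nothing but $(\calJ - f_l(\lambda))^{-1}$ in disguise; the lemma will then be immediate from the spectral-theorem representation $m(z)=((\calJ - z)^{-1}\delta_1,\delta_1)$, combined with the sheet convention $\pi(\widetilde f_l(\lambda))=f_l(\lambda)$ and the fact that $m$, viewed on $\calS_+$, agrees with the original Borel transform under $\pi$.

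Concretely, I would first fix $\lambda\in\bbC_+$ (regarded as a subset of $\calR_+$), where the algebra is cleanest. Applying the functional calculus for the bounded self-adjoint operator $\calJ$ to \eqref{disc_fact} gives the operator identity
\[
\Delta(\calJ)-\lambda \;=\; c_0\prod_{j=1}^{p}(\calJ - f_j(\lambda)),
\]
in which all factors commute, being polynomials in $\calJ$. Since $\Delta$ has real coefficients and $\lambda\notin\bbR$, each $f_j(\lambda)$ is non-real, hence in the resolvent set of $\calJ$, so each factor on the right is invertible. Inverting and multiplying both sides by $c_0\prod_{j\ne l}(\calJ - f_j(\lambda))$, the product telescopes by commutativity to
\[
c_0\,(\Delta(\calJ)-\lambda)^{-1}\prod_{j\ne l}(\calJ - f_j(\lambda)) \;=\; (\calJ - f_l(\lambda))^{-1}.
\]
Taking $(\,\cdot\,\delta_1,\delta_1)$ on both sides and invoking $\pi(\widetilde f_l(\lambda))=f_l(\lambda)$ yields \eqref{p4_func} for every $\lambda\in\bbC_+$.

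To pass from $\bbC_+$ to the full domain $\calR_+\setminus\pi^{-1}([-2,2])$, I would invoke meromorphic continuation on the connected set $\calR_+\setminus\pi^{-1}([-2,2])$: the left-hand side is meromorphic there because $m$ is meromorphic on $\calS_+$ and $\widetilde f_l$ is analytic on $\calR_+$, while the right-hand side is meromorphic because $(\Delta(\calJ)-\lambda)^{-1}$ is a meromorphic operator-valued function whose only poles are the (real) eigenvalues of $\Delta(\calJ)$. Agreement on the open set $\bbC_+$ then forces agreement everywhere by connectedness.

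I do not foresee a serious obstacle: the argument is essentially functional calculus plus polynomial arithmetic. The only points requiring mild care are (a) that the cancellation still works when some of the $f_j(\lambda)$ coincide at critical values of $\Delta$, which is automatic because \eqref{disc_fact} holds with multiplicities and all factors commute; and (b) keeping the sheet conventions straight, so that $\widetilde f_l(\lambda)\in\calS_+$ whenever $\lambda\in\calR_+$, which is exactly how $\widetilde f_l$ was defined and is precisely where $m$ is taken in its original, unambiguous form.
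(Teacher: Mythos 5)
Your proof is correct and follows essentially the same route as the paper: both apply the functional calculus for $\calJ$ to the rational identity $(x - f_l(\lambda))^{-1} = c_0(\Delta(x)-\lambda)^{-1}\prod_{j\ne l}(x - f_j(\lambda))$ derived from \eqref{disc_fact}, recognize the right-hand side of \eqref{p4_func} as the resolvent $(\calJ - f_l(\lambda))^{-1}$, and then take the $(\delta_1,\delta_1)$ matrix element to get $m$. Your version simply spells out the telescoping of the commuting factors and the meromorphic-continuation step that the paper leaves implicit.
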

\begin{proof}
Since $(x-f_l(\lambda))^{-1} = c_0 (\Delta(x)-\lambda)^{-1} \prod_{j\ne l} (x-f_j(\lambda))$, we obtain
\begin{equation}\label{p4_ee2.10}
(\calJ-{f}_l(\lambda))^{-1} = c_0(\Delta(\calJ)-\lambda)^{-1} \prod_{j\ne l} (\calJ-f_j(\lambda)) \quad \mbox{ for } \lambda\in\calR_+
\end{equation}
(note also that $\prod_{j\ne l} (\calJ-f_j(\lambda))$ is a finite-banded matrix, so the multiplication on the right-hand side is well-defined). Now using \eqref{p4_resolvent}, we obtain the result of the lemma. %The tilde on the left-hand side of \eqref{p4_func} is needed only because we are looking at $m$ as a function on $\calS$, rather than on $\bbC$.
\qed
\end{proof}

Note that \eqref{p4_func} allows one to continue $m$ using the continuation of $\mathfrak{m}_\Delta$, but not vice versa since we cannot invert the operator $\prod_{j\ne l} (\calJ-f_j(\lambda))$. There is a trick that will help us, though.

\begin{lemma}\label{p4_may1} For $\lambda\in \calR_+$,
\begin{multline}\label{p4_ee2.6}
\sum_{j=1}^p
% \left( \begin{array}{cccc} q_0(f_j(\lambda))+p_0(f_j(\lambda))m(f_j(\lambda))& q_1(f_j(\lambda))+p_1(f_j(\lambda))m(f_j(\lambda))&\cdots & q_{p-1}(f_j(\lambda))+p_{p-1}(f_j(\lambda))m(f_j(\lambda)) \\ q_1(f_j(\lambda))+p_1(f_j(\lambda))m(f_j(\lambda)) &q_1(f_j(\lambda))p_1(f_j(\lambda))+p^2_1(f_j(\lambda))m(f_j(\lambda)) &\cdots & q_{p-1}(f_j(\lambda))p_1(f_j(\lambda))+p_{p-1}(f_j(\lambda)) p_1(f_j(\lambda))m(f_j(\lambda)) \\ \vdots&\vdots&\ddots&\vdots \\ q_{p-1}(f_j(\lambda))+p_{p-1}(f_j(\lambda))m(f_j(\lambda)) & q_{p-1}(f_j(\lambda))p_1(f_j(\lambda))+p_{p-1}(f_j(\lambda)) p_1(f_j(\lambda))m(f_j(\lambda)) &\cdots & q_{p-1}(f_j(\lambda))p_{p-1}(f_j(\lambda))+p^2_{p-1}(f_j(\lambda))m(f_j(\lambda)) \end{array}
%\right)\\
 \left( \begin{array}{cccc} q_0+p_0m& q_1+p_1m&\cdots & q_{p-1}+p_{p-1}m \\ q_1+p_1m &q_1p_1+p^2_1m &\cdots & q_{p-1}p_1+p_{p-1} p_1m \\ \vdots&\vdots&\ddots&\vdots \\ q_{p-1}+p_{p-1}m & q_{p-1}p_1+p_{p-1} p_1m &\cdots & q_{p-1}p_{p-1}+p^2_{p-1}m \end{array}\right)(\widetilde{f}_j(\lambda)) \\
 =
 \mathfrak{m}_\Delta(\lambda)(S_{11}+\mathfrak{p}^R_1(\lambda)S_{21})+ A_1^{-1}{}^* S_{21},
\end{multline}
where $S_{ij}$ is the $(i,j)$-th $p\times p$ block entry of $\Delta'(\calJ)$, and $p_j, q_j$ are  the first and second kind polynomials for $\calJ$.
\end{lemma}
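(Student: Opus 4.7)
The plan is to recognize the matrix inside the sum on the left-hand side of~\eqref{p4_ee2.6} as the top-left $p\times p$ block of the scalar Jacobi resolvent $(\calJ-x)^{-1}$, then use a partial-fractions (logarithmic derivative) identity to rewrite $\sum_{l=1}^p (\calJ - f_l(\lambda))^{-1}$ as $(\Delta(\calJ)-\lambda)^{-1}\Delta'(\calJ)$, and finally expand the top-left block on the right using the block-tridiagonal structure of $\Delta'(\calJ)$ together with the standard matrix-Weyl formula for the off-diagonal block of the block-resolvent of $\widetilde{\calJ}:=\Delta(\calJ)$.

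First, writing $\psi_n(x):=q_n(x)+p_n(x)m(x)$ for the Weyl solution and using $p_0=1$, $q_0=0$, a direct check shows that the $(i,k)$ entry of the $p\times p$ matrix inside the sum equals $p_{\min(i,k)-1}(x)\psi_{\max(i,k)-1}(x)$. This is the standard scalar Green's-function formula for $[(\calJ-x)^{-1}]_{i,k}$ (see Appendix~\ref{Appendix_Orthogonal}). Thus the matrix evaluated at $\widetilde{f}_j(\lambda)$ is the top-left $p\times p$ block of $(\calJ-f_j(\lambda))^{-1}$.

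Next, differentiating the factorization~\eqref{disc_fact}, $\Delta(z)-\lambda=c_0\prod_{l=1}^p(z-f_l(\lambda))$, logarithmically in $z$ yields the partial-fractions identity
\[
\sum_{l=1}^p\frac{1}{z-f_l(\lambda)}=\frac{\Delta'(z)}{\Delta(z)-\lambda}.
\]
For $\lambda\in\calR_+\setminus\pi^{-1}([-2,2])$ each $f_l(\lambda)$ lies off $\sigma(\calJ)\subset\mathbb{R}$, so the functional calculus produces
\[
\sum_{l=1}^p (\calJ-f_l(\lambda))^{-1}=(\Delta(\calJ)-\lambda)^{-1}\Delta'(\calJ).
\]
Taking the top-left $p\times p$ block converts the left-hand side into the sum on the left of~\eqref{p4_ee2.6}, so it suffices to compute the same block on the right.

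Finally, since $\Delta'$ has degree $p-1$, $\Delta'(\calJ)$ is $(p-1)$-banded and hence block-tridiagonal in the $p\times p$ decomposition used for $\widetilde{\calJ}$; only the blocks $S_{11}$ and $S_{21}$ sit in its first block column. Writing $G_{i,j}(\lambda)$ for the $(i,j)$ block of $(\widetilde{\calJ}-\lambda)^{-1}$, block multiplication gives
\[
\left[(\widetilde{\calJ}-\lambda)^{-1}\Delta'(\calJ)\right]_{1,1}=G_{1,1}(\lambda)S_{11}+G_{1,2}(\lambda)S_{21}.
\]
By definition $G_{1,1}(\lambda)=\mathfrak{m}_\Delta(\lambda)$, and the standard matrix-Weyl formula from Appendix~\ref{Appendix_Orthogonal} gives $G_{1,2}(\lambda)=\mathfrak{m}_\Delta(\lambda)\mathfrak{p}^R_1(\lambda)+A_1^{-1*}$, where the constant term is the degree-zero initial value of $\mathfrak{q}^R_1$. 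Substituting yields exactly the right-hand side of~\eqref{p4_ee2.6}. The main obstacle is bookkeeping: one has to match conventions carefully so that $\mathfrak{p}^R_1$ multiplies $\mathfrak{m}_\Delta$ on the right (not the left), and that the additive constant comes out with the correct adjoint and sign as $A_1^{-1*}$, but these issues are resolved by the setup of the matrix-valued orthogonal polynomials in Appendix~\ref{Appendix_Orthogonal}.
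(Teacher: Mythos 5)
Your proof is correct and follows the same route as the paper: sum the resolvent identities over all branches $f_l$, use the partial-fractions identity to recognize the result as $(\Delta(\calJ)-\lambda)^{-1}\Delta'(\calJ)$, and then extract the top-left $p\times p$ block on each side via the resolvent formula \eqref{p4_resolvent} together with $\mathfrak{q}_1^R=A_1^{*-1}$. The only cosmetic difference is that you derive the identity $c_0\sum_l\prod_{j\ne l}(z-f_j(\lambda))=\Delta'(z)$ by logarithmic differentiation of \eqref{disc_fact}, whereas the paper differentiates directly; these are equivalent.
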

%\begin{remark}
%Until the end of the paper, whenever $z\in \calS$, then by $p_k(z)$ and $q_k(z)$ we mean $p_k(\pi(z))$ and $q_k(\pi(z))$ respectively. Similarly, if $\lambda\in\calR$, then by $\mathfrak{p}_k(\lambda)$ and $\mathfrak{q}_k(\lambda)$ we mean $\mathfrak{p}_k(\pi(\lambda))$ and $\mathfrak{q}_k(\pi(\lambda))$ respectively.
%\end{remark}
\begin{proof}
Sum the equalities \eqref{p4_ee2.10} from $l=1$ to $p$:
\begin{equation}\label{p4_ee2.12}
\begin{aligned}
\sum_{l=1}^p (\calJ-\widetilde{f}_l(\lambda))^{-1} &= c_0 (\Delta(\calJ)-\lambda)^{-1} \sum_{l=1}^p \prod_{j\ne l} (\calJ-f_j(\lambda)) \\
&= (\Delta(\calJ)-\lambda)^{-1} \Delta'(\calJ).
\end{aligned}
\end{equation}
The last equality comes from
\begin{equation}\label{p4_ee2.13}
c_0 \sum_{l=1}^p \prod_{j\ne l} (x-f_j(\lambda))= \Delta'(x)
\end{equation}
(to see this, just differentiate \eqref{disc_fact}).

Now using \eqref{p4_resolvent}, and taking the top-left $p\times p$ block of both sides of \eqref{p4_ee2.12}, we obtain
\begin{equation*}
\mbox{LHS of \eqref{p4_ee2.6}}= \mathfrak{m}_\Delta(\lambda) S_{11}+ (\mathfrak{q}_1^R(\lambda)+\mathfrak{m}_\Delta(\lambda) \mathfrak{p}_1^R(\lambda)) S_{21}.
\end{equation*}
Since $\mathfrak{q}_1^R(\lambda)=A_1^*{}^{-1}$ (see \eqref{p4_ee1.12}), we obtain the result of the lemma.
\qed \end{proof}

The above lemma allows us to continue $\mathfrak{m}_\Delta$ using the continuation of $m$. We will now establish some results that will allow us to study  zeros and poles of $m$, $\mathfrak{m}_\Delta$, $m-m^\sharp$, and $\mathfrak{m}_\Delta-\mathfrak{m}^\sharp_\Delta$.

\begin{lemma}\label{p4_may2} If $m$ and $\mathfrak{m}_\Delta$ have meromorphic continuations to $\calS_R$ and $\calR_R$, respectively, then for $\lambda\in \pi^{-1}(F_R)$,
\begin{multline}\label{p4_ems}
\left[\mathfrak{m}_\Delta(\lambda)-\mathfrak{m}^\sharp_\Delta(\lambda)\right] (S_{11}+\mathfrak{p}_1(\lambda)S_{21})= \sum_{j=1}^p \left[ m(\widetilde{f}_j(\lambda))-m^\sharp(\widetilde{f}_j(\lambda)) \right] \times \\
\times \left( \begin{array}{cccc} 1&p_1&\cdots & p_{p-1} \\ p_1&p^2_1&\cdots & p_1p_{p-1} \\ \vdots&\vdots&\ddots&\vdots \\ p_{p-1}&p_1p_{p-1} &\cdots & p^2_{p-1} \end{array}
\right) (f_j(\lambda)).
%\mbox{or: } [p_{k-1}(f_j(\lambda))p_{s-1}(f_j(\lambda))]_{k,s=1}^p
\end{multline}
\end{lemma}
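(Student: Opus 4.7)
The plan is to apply the $\sharp$-operation to the identity~\eqref{p4_ee2.6} of Lemma~\ref{p4_may1} and subtract. As preparation, I would first decompose the $p\times p$ matrix appearing inside the sum on the LHS of~\eqref{p4_ee2.6} as $M(z) = N(z) + m(z)\,V(z)V(z)^T$, where $V(z) = (1, p_1(z), \ldots, p_{p-1}(z))^T$ and $N$ is the symmetric matrix with real polynomial entries $N_{ij}(z) = p_{\min(i,j)-1}(z)\,q_{\max(i,j)-1}(z)$ (with the convention $p_0 \equiv 1$, $q_0 \equiv 0$). Observe that $V(f_j(\lambda))V(f_j(\lambda))^T$ is exactly the matrix displayed on the RHS of~\eqref{p4_ems}, so the asserted RHS equals $\sum_{j} (m-m^\sharp)(\widetilde f_j(\lambda))\,V V^T$, i.e.\ the ``$m$-part'' of $\sum_j M(\widetilde f_j(\lambda))$ minus its ``$m^\sharp$-counterpart''.

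Since $m$ and $\mathfrak m_\Delta$ extend meromorphically (by hypothesis) to $\calS_R$ and $\calR_R$, I would next note that~\eqref{p4_ee2.6} continues to hold at $\lambda^\sharp$ in place of $\lambda$ by analytic continuation, and then take the conjugate transpose of that continued identity. Rewriting it in terms of $\lambda$ would use: (i) $\widetilde f_j(\lambda^\sharp) = \widetilde f_j(\lambda)^\sharp$, immediate from $\Delta\in\bbR[z]$ and the continuity conventions for the $f_j$; (ii) since $N$ is symmetric with real polynomial entries and $V$ has real polynomial entries, $M(z^\sharp)^* = N(z) + m^\sharp(z)\,V(z)V(z)^T$; (iii) $\mathfrak m_\Delta(\lambda^\sharp)^* = \mathfrak m_\Delta^\sharp(\lambda)$ by definition; (iv) $S_{11}^* = S_{11}$ and $S_{21}^* = S_{12}$, since $\Delta'(\calJ)$ is self-adjoint; and (v) the duality $\mathfrak p_1^R(\lambda^\sharp)^* = \mathfrak p_1(\lambda)$ and $\mathfrak q_1^R(\lambda^\sharp)^* = \mathfrak q_1(\lambda)$ between the right and left matrix orthonormal polynomials, which is a standard consequence of the Hermitian-matrix-valued nature of $\mu_\Delta$ (Appendix~\ref{Appendix_Orthogonal}).

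Subtracting the resulting ``$\sharp$-conjugated'' version of~\eqref{p4_ee2.6} from~\eqref{p4_ee2.6} itself then kills the polynomial part entirely: the two identical copies of $\sum_j N(\widetilde f_j(\lambda))$ cancel on the LHS, producing exactly the RHS of~\eqref{p4_ems}; and on the RHS, ingredients (iv)--(v) arrange for the constant terms $\mathfrak q_1^R(\lambda)S_{21}$ and $S_{12}\mathfrak q_1(\lambda)$ to cancel, while the $\mathfrak m_\Delta$-dependent pieces combine into $[\mathfrak m_\Delta(\lambda) - \mathfrak m_\Delta^\sharp(\lambda)]\,(S_{11} + \mathfrak p_1(\lambda)S_{21})$, as claimed.

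The hard part will be the careful bookkeeping of the $\sharp$-operation on the matrix-valued polynomials, above all establishing~(v). The identification of $\mathfrak p_1^R(\lambda^\sharp)^*$ with the \emph{left} polynomial $\mathfrak p_1(\lambda)$, and analogously $\mathfrak q_1^R$ with $\mathfrak q_1$, is what ensures the final RHS factors cleanly as $[\mathfrak m_\Delta-\mathfrak m_\Delta^\sharp]\,(S_{11}+\mathfrak p_1 S_{21})$ with the same real matrix appearing on both sides (rather than a transposed mismatch), and it is also what makes the constant-term cancellation exact. Once these duality identities are in hand, the rest is a direct algebraic manipulation.
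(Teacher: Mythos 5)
Your overall approach---apply the $\sharp$-operation to~\eqref{p4_ee2.6} and subtract---is exactly what the paper's ``Immediate from the previous lemma'' hides, and your decomposition $M(z) = N(z) + m(z)\,V(z)V(z)^T$ together with the identity $M(z^\sharp)^* = N(z) + m^\sharp(z)\,V(z)V(z)^T$ is correct and does make the polynomial part drop out of the left side cleanly. But your account of what happens on the right side has a genuine gap.

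Applying ${}^*$ to $\mathfrak m_\Delta(\lambda^\sharp)\bigl(S_{11}+\mathfrak p_1^R(\lambda^\sharp)S_{21}\bigr) + (A_1^*)^{-1}S_{21}$ reverses the order of multiplication, so after (iii)--(v) (and note (v) should read $\mathfrak p_1^R(\lambda^\sharp)^* = \mathfrak p_1^L(\lambda)$, $\mathfrak q_1^R(\lambda^\sharp)^* = \mathfrak q_1^L(\lambda)$, which are generally different from $\mathfrak p_1^R,\mathfrak q_1^R$) the $\sharp$-conjugated right side reads
\begin{equation*}
\bigl(S_{11} + S_{12}\,\mathfrak p_1^L(\lambda)\bigr)\,\mathfrak m_\Delta^\sharp(\lambda) + S_{12}A_1^{-1},
\end{equation*}
with $\mathfrak m_\Delta^\sharp$ on the \emph{right} of a transposed $\mathfrak U$-factor, and with constant $S_{12}A_1^{-1}$, not $(A_1^*)^{-1}S_{21}$. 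These two constants are transposes of each other and are \emph{not equal} in general: e.g.\ for $p=2$ and quadratic $\Delta$, $(A_1^*)^{-1}S_{21} = T_{21}^{-1}S_{21}$ is strictly lower triangular and nonzero. So, contrary to your last paragraph, neither of your two asserted cancellations happens term-by-term: the constant terms do not cancel, and the $\mathfrak m_\Delta$-pieces do not factor as $[\mathfrak m_\Delta - \mathfrak m_\Delta^\sharp]\,(S_{11}+\mathfrak p_1 S_{21})$ on their own.

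The fix uses a symmetry you had available but did not invoke. Since $\calJ$, $\Delta(\calJ)$, $\Delta'(\calJ)$ have real entries, each $M(z)$---and hence both sides of~\eqref{p4_ee2.6} and of its $\sharp$-image---is symmetric under the ordinary transpose, and $\mathfrak m_\Delta(\lambda)^T = \mathfrak m_\Delta(\lambda)$, so also $\mathfrak m_\Delta^\sharp(\lambda)^T = \mathfrak m_\Delta^\sharp(\lambda)$. Transposing the $\sharp$-conjugated right side (using $S_{12}^T = S_{21}$, $\mathfrak p_1^L(\lambda)^T = \mathfrak p_1^R(\lambda)$, $(A_1^{-1})^T = (A_1^*)^{-1}$) therefore rewrites it in the same right-multiplied form as~\eqref{p4_ee2.6}, namely $\mathfrak m_\Delta^\sharp(\lambda)\bigl(S_{11}+\mathfrak p_1(\lambda)S_{21}\bigr) + (A_1^*)^{-1}S_{21}$, and only \emph{then} do the constants cancel and the $\mathfrak m_\Delta$-terms factor as claimed. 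This transpose-symmetry step is not mere bookkeeping; it is what makes the subtraction close up, and your proposal omits it.
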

\begin{proof}
Immediate from the previous lemma.
\qed
\end{proof}

In order to use equality \eqref{p4_ems}, we will need to understand the detailed behavior of $S_{11}+\mathfrak{p}_1(\lambda)S_{21}$. This is done in Lemma \ref{p4_may5}. To prove it, we will need the following perturbation theory result. For the terminology and basics of perturbation theory, we refer the reader to \cite[Section~2.1]{Kato} or \cite[Section~3.2]{Baumgartel}.

\begin{lemma}\label{perturb}
Let $\mathfrak{U}(z)$ be an  analytic $p\times p$ matrix-valued function in a small neighborhood of $z=z_0$ and $\lambda_0$ be an eigenvalue of $\mathfrak{U}(z_0)$. Suppose that
\begin{list}{}
{
\setlength\labelwidth{2em}%
\setlength\labelsep{0.5em}%
\setlength\leftmargin{2.5em}%
}
  \item[(E1)] The $\lambda_0$-group of perturbed eigenvalues of $\mathfrak{U}(z)$ is $\{ \lambda_1(z), \lambda_2(z), \ldots, \lambda_{2N}(z) \}$, each of multiplicity $1$. Suppose that this $\lambda_0$-group of eigenvalues consists of $N$ cycles $\{\lambda_{2s}(z),\lambda_{2s+1}(z)\}$ of period $2$ $(s=1,2,\ldots,N)$.
  \item[(E2)] The eigenvectors $\vec{g}_{j}(z)$ of $\mathfrak{U}(z)$ corresponding to the eigenvalue $\lambda_j(z)$ can be chosen so that they satisfy
    \begin{align}
    \label{puis1} \vec{g}_{2s}(z) &= \vec{h}_s + (z-z_0)^{1/2} \vec{k}_{2s}  + O(z-z_0),\\
    \label{puis2} \vec{g}_{2s+1}(z) &= \vec{h}_s + (z-z_0)^{1/2} \vec{k}_{2s+1} + O(z-z_0),
    \end{align}
    where $\vec{h}_1,\ldots,\vec{h}_N\in\bbC^p$ are linearly independent constant vectors.
\end{list}

Then the Jordan blocks  corresponding to $\lambda_0$ in the Jordan form of $\mathfrak{U}(z_0)$ are each of size $2\times 2$, and there are $N$ of them.
\end{lemma}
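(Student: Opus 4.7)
The plan is to read the Jordan structure of $\mathfrak{U}(z_0)$ at $\lambda_0$ directly off the Puiseux expansions \eqref{puis1}--\eqref{puis2}, extracting one length-$2$ Jordan chain from each $2$-cycle. A standard fact from analytic perturbation theory is that the algebraic multiplicity of $\lambda_0$ as an eigenvalue of $\mathfrak{U}(z_0)$ equals the size of the $\lambda_0$-group, namely $2N$; so if $\mathcal{G}$ denotes the generalized eigenspace of $\mathfrak{U}(z_0)$ at $\lambda_0$, then $\dim\mathcal{G}=2N$ and the task is to determine how this $2N$ splits into block sizes.

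First, letting $z\to z_0$ in $\mathfrak{U}(z)\vec{g}_{2s}(z)=\lambda_{2s}(z)\vec{g}_{2s}(z)$ and using (E2) gives $\mathfrak{U}(z_0)\vec{h}_s=\lambda_0\vec{h}_s$, so each $\vec{h}_s$ is an honest eigenvector; linear independence of $\vec{h}_1,\ldots,\vec{h}_N$ then yields $\dim\ker(\mathfrak{U}(z_0)-\lambda_0)\ge N$. The core step is to build a Jordan partner for each $\vec{h}_s$ out of the $(z-z_0)^{1/2}$ correction. Because the cycle $\{\lambda_{2s},\lambda_{2s+1}\}$ has period exactly $2$, the sum $\lambda_{2s}+\lambda_{2s+1}$ is holomorphic at $z_0$ while
\[
\lambda_{2s}(z)-\lambda_{2s+1}(z)=2\beta_s(z-z_0)^{1/2}+O(z-z_0),\qquad \beta_s\ne 0.
\]
Subtracting the two eigenvalue equations, splitting the right-hand side as
\[
\tfrac{\lambda_{2s}+\lambda_{2s+1}}{2}\bigl(\vec{g}_{2s}-\vec{g}_{2s+1}\bigr)+\tfrac{\lambda_{2s}-\lambda_{2s+1}}{2}\bigl(\vec{g}_{2s}+\vec{g}_{2s+1}\bigr),
\]
dividing by $(z-z_0)^{1/2}$, and letting $z\to z_0$ should produce
\[
(\mathfrak{U}(z_0)-\lambda_0)\vec{w}_s=2\beta_s\vec{h}_s,\qquad \vec{w}_s:=\vec{k}_{2s}-\vec{k}_{2s+1},
\]
exhibiting $\vec{w}_s$ as a generalized eigenvector of depth $2$ attached to $\vec{h}_s$.

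To wrap up, I will verify that $\{\vec{h}_1,\ldots,\vec{h}_N,\vec{w}_1,\ldots,\vec{w}_N\}$ is linearly independent: applying $\mathfrak{U}(z_0)-\lambda_0$ to a dependence relation $\sum_s(c_s\vec{h}_s+d_s\vec{w}_s)=0$ and using $\beta_s\ne 0$ together with independence of the $\vec{h}_s$ forces first all $d_s=0$ and then all $c_s=0$. These $2N$ vectors therefore span $\mathcal{G}$. Since $(\mathfrak{U}(z_0)-\lambda_0)^2$ annihilates every $\vec{h}_s$ and every $\vec{w}_s$, it vanishes on $\mathcal{G}$, so every Jordan block for $\lambda_0$ has size at most $2$. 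Moreover, the image of $\mathfrak{U}(z_0)-\lambda_0$ restricted to $\mathcal{G}$ is $\spann\{\vec{h}_s\}_{s=1}^N$, of dimension $N$; hence $\dim\ker(\mathfrak{U}(z_0)-\lambda_0)=2N-N=N$, so there are exactly $N$ Jordan blocks, and the total size $2N$ forces each to be precisely $2\times 2$.

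The main obstacle will be the careful bookkeeping in the subtract-and-rescale step, where one must separate the two sources of $(z-z_0)^{1/2}$ behaviour (from the eigenvalue gap $\lambda_{2s}-\lambda_{2s+1}$ and from the eigenvector correction $\vec{g}_{2s}-\vec{g}_{2s+1}$) and confirm that the surviving limit is exactly $\lambda_0\vec{w}_s+2\beta_s\vec{h}_s$. Nonvanishing of $\beta_s$, which is the analytic content of the hypothesis that each cycle has period $2$ rather than $1$, is precisely what makes the Jordan partner nontrivial; were any $\beta_s$ to vanish, that cycle would degenerate to two holomorphic branches and the $2\times 2$ conclusion would fail at that site.
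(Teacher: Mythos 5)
Your proof is correct and follows essentially the same route as the paper's: extract $\mathfrak{U}(z_0)\vec{h}_s=\lambda_0\vec{h}_s$ by letting $z\to z_0$ in the eigenvalue equation, exploit the Puiseux expansion of the period-$2$ cycle to show $(\mathfrak{U}(z_0)-\lambda_0)(\vec{k}_{2s}-\vec{k}_{2s+1})$ is a nonzero multiple of $\vec{h}_s$, and then combine the $N$ resulting Jordan chains of length $\ge 2$ with the algebraic multiplicity count $2N$. The only cosmetic difference is that you rearrange $\lambda_{2s}\vec{g}_{2s}-\lambda_{2s+1}\vec{g}_{2s+1}$ into symmetric and antisymmetric parts before rescaling, while the paper divides the subtracted equation by $2c_s(z-z_0)^{1/2}$ directly; and you spell out the linear independence of $\{\vec{h}_s,\vec{w}_s\}$ and the rank of $(\mathfrak{U}(z_0)-\lambda_0)$ on the generalized eigenspace, which the paper leaves implicit.
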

\begin{remarks}
1. Note that given the condition (E1), there always exist eigenvectors having expansions \eqref{puis1}--\eqref{puis2}, with non-zero $\vec{h}_s$ (see Theorem~2 from \cite[Section~6.1.7]{Baumgartel}). What is a non-trivial requirement here is that the vectors $\vec{h}_1,\ldots,\vec{h}_N$ are linearly independent.

2. Condition (E1) easily gives us that the algebraic multiplicity of $\lambda_0$ is $2N$. The condition (E2) says that the geometric multiplicity is \textit{at least} $N$. There doesn't seem to be a general theory that would determine the Jordan form from this.

\end{remarks}
\begin{proof}
Using the perturbation theory on the eigenprojections, we immediately know that the algebraic multiplicity of $\lambda_0$ as an eigenvalue of $\mathfrak{U}(z_0)$ is $2N$. We will now find $N$ linearly independent eigenvectors (which are going to be $\vec{h}_1,\ldots,\vec{h}_N$, of course), and show that each one of them has an associated generalized eigenvector. This determines the Jordan structure we are looking for.

Condition (E1) and the standard perturbation theory tell us that the perturbed eigenvalue functions have the Puiseux expansions
\begin{align}
\label{puis3} \lambda_{2s}(z) &= \lambda_0 + c_s (z-z_0)^{1/2} + O(z-z_0), \\
\label{puis4} \lambda_{2s+1}(z) &= \lambda_0 - c_s (z-z_0)^{1/2} + O(z-z_0),
\end{align}
for any $s=1,\ldots,N$.

Note that $c_s\ne 0$, for otherwise $\{\lambda_{2s}(z),\lambda_{2s+1}(z)\}$ would not constitute a period~$2$ cycle.

%We also need the expansions for eigenvectors. We will make use of Theorem~2 from \cite[Section~6.1.7]{Baumgartel}. Note that because of the multiplicity $1$ assumption of our lemma, each $\lambda_j(z)$-eigenprojection of $\mathfrak{U}(z)$ is of rank $1$ ($m_\alpha=1$ in the notation of \cite[Section~6.1.7]{Baumgartel}). Therefore one can choose eigenvectors $g_{s}(z)$ of $\mathfrak{U}(z)$ corresponding to the eigenvalue $\lambda_s(z)$ which satisfy
%\begin{align}
%\vec{g}_{2s}(z) &= \vec{h}_s + (z-z_0)^{1/2} \vec{k}_{2s}  + O(z-z_0)\\
%\vec{g}_{2s+1}(z) &= \vec{h}_s + (z-z_0)^{1/2} \vec{k}_{2s+1} + O(z-z_0)
%\end{align}
%for any $s=1,\ldots,N$. Note that $\vec{h}_s$ is a non-zero vector by the quoted theorem.

Now taking $z\to z_0$ in $\mathfrak{U}(z) \vec{g}_{2s}(z) = \lambda_{2s}(z) \vec{g}_{2s}(z)$ gives
\begin{equation*}
\mathfrak{U}(z_0) \vec{h}_s = \lambda_0 \vec{h}_s.
\end{equation*}
Similarly, plugging expansions \eqref{puis1}, \eqref{puis2}, \eqref{puis3}, \eqref{puis4} into
$$
\mathfrak{U}(z) \left( \frac{\vec{g}_{2s}(z) - \vec{g}_{2s+1}(z)}{2 c_s (z-z_0)^{1/2}} \right) = \tfrac{1}{2 c_s (z-z_0)^{1/2}} ( \lambda_{2s}(z) \vec{g}_{2s}(z) - \lambda_{2s+1}(z) \vec{g}_{2s+1}(z) ),
$$ and then taking taking $z\to z_0$, gives
\begin{equation*}
\mathfrak{U}(z_0) \left( \frac{\vec{k}_{2s} - \vec{k}_{2s+1}}{2 c_s} \right) = \vec{h}_s + \lambda_0 \frac{\vec{k}_{2s} - \vec{k}_{2s+1}}{2 c_s}.
\end{equation*}
This shows that $\tfrac{\vec{k}_{2s} - \vec{k}_{2s+1}}{2 c_s}$ is a non-zero vector, and, in fact, is the generalized eigenvector associated with $\lambda_0$ and $\vec{h}_s$. Thus we obtain $N$ Jordan blocks of size at least $2$. Since the algebraic multiplicity is $2N$, we obtain the statement of our lemma.
\qed \end{proof}

We can now prove the following lemma. Recall that $\{\gamma_j\}_{j=1}^{p-1}$ are the zeros of the polynomial $\Delta'(z)$. Denote
\begin{equation}\label{funcU}
\mathfrak{U}(\lambda)=S_{11}+\mathfrak{p}_1(\lambda)S_{21}.
\end{equation}

\begin{lemma}\label{p4_may5} The following holds:
\begin{align} \label{p4_ee2.15}
&\det\mathfrak{U}(\lambda)=p^{p} \prod_{j=1}^{p-1} (\lambda-\Delta(\gamma_j)), \\
\label{p4_ee2.16}
&\ker\mathfrak{U}(\lambda)=\spann \{\vec{v}_1(\lambda),\cdots,\vec{v}_p(\lambda)\}^\perp,
\end{align}
where  $\vec{v}_j(\lambda)=(1,p_1(f_j(\lambda)),\cdots,p_{p-1}(f_j(\lambda)))^*$.

In particular, $\mathfrak{U}(\lambda)$ is singular if and only if $\lambda=\Delta(\gamma_j)$, $j=1,\ldots,p-1$, and $\mathfrak{U}(\lambda)^{-1}$ has simple poles at these points.
\end{lemma}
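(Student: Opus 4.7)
The plan is to identify the column vectors $\vec{v}_j(\lambda)=(p_0(f_j(\lambda)),\ldots,p_{p-1}(f_j(\lambda)))^*$ as left eigenvectors of $\mathfrak{U}(\lambda)$ with eigenvalues $\Delta'(f_j(\lambda))$, and then to read off both the determinant and the kernel from the resulting spectral picture. The key input is the formal identity $\Delta'(\calJ)\vec{p}(z)=\Delta'(z)\vec{p}(z)$, valid for the infinite column vector $\vec{p}(z)=(p_0(z),p_1(z),\ldots)^T$ because the scalar OP recursion makes $\vec{p}(z)$ a formal eigenvector of $\calJ$ with eigenvalue $z$. Reading the first $p$ scalar entries and using that $\Delta'(\calJ)$ has bandwidth $p-1$ (so $(\Delta'(\calJ))_{1k}=0$ in block form for $k\ge 3$) gives
\[
S_{11}\vec{p}^{(0)}(z)+S_{12}\vec{P}_1(z)=\Delta'(z)\vec{p}^{(0)}(z),
\]
where $\vec{p}^{(0)},\vec{P}_1$ are the first two length-$p$ blocks of $\vec{p}(z)$. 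Substituting the block-OP identity $\vec{P}_1(z)=\mathfrak{p}_1^R(\Delta(z))\vec{p}^{(0)}(z)$ (which follows by matching the block-Jacobi recursion with the scalar one), using $S_{12}=S_{21}^*$, and transposing to align with the paper's $\mathfrak{U}(\lambda)=S_{11}+\mathfrak{p}_1^R(\lambda)S_{21}$ yields the left-eigenvector identity $\vec{v}_j(\lambda)^*\mathfrak{U}(\lambda)=\Delta'(f_j(\lambda))\vec{v}_j(\lambda)^*$.

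Next, for $\lambda\notin\{\Delta(\gamma_k)\}_{k=1}^{p-1}$ the preimages $f_j(\lambda)$ are distinct, and a Vandermonde argument (using that $\{p_0,\ldots,p_{p-1}\}$ is a triangular transform of the monomial basis) shows $\{\vec{v}_j(\lambda)\}_{j=1}^p$ is linearly independent. So $\mathfrak{U}(\lambda)$ is diagonalizable with spectrum $\{\Delta'(f_j(\lambda))\}_{j=1}^p$, giving $\det\mathfrak{U}(\lambda)=\prod_j\Delta'(f_j(\lambda))$. A resultant computation applied to $\Delta(z)-\lambda$ and $\Delta'(z)$ (using \eqref{disc_fact} and the leading coefficient $pc_0$ of $\Delta'$) evaluates this product to \eqref{p4_ee2.15}. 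The kernel identity is automatic at such $\lambda$ since both sides are $\{0\}$.

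At a critical $\lambda_0=\Delta(\gamma_k)$, two preimages $f_{j_1}(\lambda_0)=f_{j_2}(\lambda_0)=\gamma_k$ coalesce, so two of the $\vec{v}_j(\lambda_0)$ collapse to a common vector and $\dim\spann\{\vec{v}_j(\lambda_0)\}=p-1$. The simple root of $\det\mathfrak{U}$ at $\lambda_0$ combined with Jacobi's formula forces $\operatorname{rank}\mathfrak{U}(\lambda_0)=p-1$, so $\dim\ker\mathfrak{U}(\lambda_0)=1$. The left-eigenvector identity yields $\ker\mathfrak{U}(\lambda_0)\perp\vec{v}_j(\lambda_0)$ for all non-critical $j$; the remaining orthogonality to the critical $\vec{v}$ comes from Lemma~\ref{perturb} applied with $N=1$: the perturbed eigenvalues $\Delta'(f_{j_{1,2}}(\lambda))\sim\pm c\sqrt{\lambda-\lambda_0}$ form a period-2 cycle with eigenvectors sharing a common limit, forcing a single $2\times 2$ Jordan block at the eigenvalue $0$. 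For such a block the left and right null vectors are automatically orthogonal, supplying the missing orthogonality. Finally, Cramer's rule applied to the simple roots of $\det\mathfrak{U}$ gives the simple poles of $\mathfrak{U}^{-1}$.

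The main obstacle I anticipate is the first step: the identity produced directly from the generalized-eigenvector relation naturally features $S_{11}+S_{12}\mathfrak{p}_1^R(\lambda)$, whereas the paper's $\mathfrak{U}$ has $\mathfrak{p}_1^R$ on the \emph{left} of $S_{21}$. Reconciling the two requires careful bookkeeping of transposes and attention to the paper's normalization $\mathfrak{q}_1^R=A_1^{-*}$ for the right matrix OPs. The Jordan-block orthogonality in the critical case is the other nontrivial input, but with Lemma~\ref{perturb} just proven, the structural framework is in place.
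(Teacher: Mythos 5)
Your approach is essentially the same as the paper's: you identify the vectors $\vec{v}_j(\lambda)$ as (left) eigenvectors of $\mathfrak{U}(\lambda)$ with eigenvalues $\Delta'(f_j(\lambda))$ by reading off block rows of the formal identity $\Delta'(\calJ)\vec{p}(z)=\Delta'(z)\vec{p}(z)$, deduce the determinant via a resultant/Vandermonde computation, and handle the critical values via Lemma~\ref{perturb}. The bookkeeping issue you flag is real and resolvable; just note that $\vec{P}_1(z)=T_{12}^{-1}(\Delta(z)-T_{11})\vec{p}^{(0)}(z)=A_1^{-1}(\Delta(z)-B_1)\vec{p}^{(0)}(z)$ is $\mathfrak{p}_1^L(\Delta(z))\vec{p}^{(0)}(z)$, not $\mathfrak{p}_1^R$; the $\mathfrak{p}_1^R$ appearing in $\mathfrak{U}$ emerges after the transpose, as you anticipated.

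The genuine gap is at a critical value $\lambda_0$: you argue only the case where exactly one $\gamma_k$ satisfies $\Delta(\gamma_k)=\lambda_0$, i.e., $N=1$ in the notation of Lemma~\ref{perturb}. You then use ``simple root of $\det\mathfrak{U}$'' and a single $2\times 2$ Jordan block. But the paper's formula \eqref{p4_ee2.15} shows that $\det\mathfrak{U}$ has a root of order equal to the number of $j$ with $\Delta(\gamma_j)=\lambda_0$, and this number can exceed $1$. For example, if $\fre$ is symmetric about $0$ with $p\ge 4$ (say four bands, three open gaps), then $\Delta$ is an even polynomial, so the outer two critical points $\pm\gamma_1$ satisfy $\Delta(\gamma_1)=\Delta(-\gamma_1)$, giving $N=2$. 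In that situation $\det\mathfrak{U}$ has a double root at $\lambda_0$, $\dim\spann\{\vec{v}_j(\lambda_0)\}=p-2$, and $\dim\ker\mathfrak{U}(\lambda_0)=2$; your ``Jacobi's formula forces rank $p-1$'' and ``Cramer's rule on simple roots'' steps both fail. The paper uses Lemma~\ref{perturb} with general $N$ precisely to handle this: the $\lambda_0$-group splits into $N$ period-$2$ cycles with linearly independent coalescence vectors $\vec{h}_1,\ldots,\vec{h}_N$ (which are the $\vec{v}_j$'s for the critical $j$'s, independent since the $\gamma_{k_s}$ are distinct), yielding $N$ Jordan blocks of size $2\times 2$ at eigenvalue $0$. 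Your orthogonality argument does extend — for a matrix whose Jordan decomposition at eigenvalue $0$ consists of $N$ blocks of size $2\times 2$, the left and right null spaces are still mutually orthogonal (in the Jordan basis the right null space lies in $\spann\{e_1,e_3,\ldots\}$ and the left null space in $\spann\{e_2^T,e_4^T,\ldots\}$, which pair to zero, and this is preserved by the similarity) — but you need to invoke it with general $N$ and replace Cramer's rule for simple roots by the general criterion of Lemma~\ref{p4_lm0} (zero of $\det$ of order $k=\dim\ker$ implies simple pole of the inverse). With those corrections your argument matches the paper's.
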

\begin{proof}
Note that by \eqref{p4_ee1.11},
$$
\mathfrak{U}(\lambda) = S_{11}+(\lambda\bdone-B_{1}) A_{1}^*{}^{-1}S_{21}=S_{11}+(\lambda\bdone-T_{11}) T_{21}^{-1}S_{21},
$$
where $S_{ij}$ and $T_{ij}$ are the $p\times p$ blocks of $\Delta'(\calJ)$ and $\Delta(\calJ)$, respectively.

Take any $\mu\in\bbC$, and let
\begin{equation*}
\begin{aligned}
\widehat{u}(\mu)&=(1,p_1(\mu),\ldots,p_{j}(\mu),\ldots)^*, \\
\vec{u}_1(\mu)&=(1,p_1(\mu),\ldots,p_{p-1}(\mu))^*, \\
\vec{u}_2(\mu)&=(p_{p}(\mu),p_{p+1}(\mu),\ldots,p_{2p-1}(\mu))^*.
\end{aligned}
\end{equation*}
Then $\widehat{u}^* \calJ={\mu }\widehat{u}^*$ in the formal sense (note that $\widehat{u}\notin \ell^2$ in general). This gives $\widehat{u}^* \Delta(\calJ)=\Delta({\mu}) \widehat{u}^*$ and $\widehat{u}^* \Delta'(\calJ)=\Delta'({\mu}) \widehat{u}^*$ in the formal sense. However $\Delta(\calJ)$ and $\Delta'(\calJ)$ are banded matrices, so we can conclude that
\begin{align*}
\vec{u}_1^* T_{11}+\vec{u}_2^* T_{21}&=\Delta({\mu}) \vec{u}_1^*, \\
\vec{u}_1^* S_{11}+\vec{u}_2^* S_{21}&=\Delta'({\mu}) \vec{u}_1^*.
\end{align*}
The first equality implies $\vec{u}_1^* T_{11} T_{21}^{-1}+\vec{u}_2^* =\Delta({\mu}) \vec{u}_1^* T_{21}^{-1}$, and therefore
\begin{equation*}
\vec{u}_1^*[S_{11}+ (\lambda-T_{11})T_{21}^{-1} S_{21}]=\vec{u}_1^* [\Delta'({\mu})+(\lambda-\Delta({\mu}))T_{21}^{-1}S_{21}].
\end{equation*}

This shows that if $\lambda=\Delta({\mu})$, then $\mathfrak{U}(\lambda)^* \vec{u}_1(\mu) = \Delta'({\bar\mu}) \vec{u}_1(\mu)$, or equivalently, \begin{equation}\label{eigen}
\mathfrak{U}(\bar\lambda)^* \vec{u}_1(\bar \mu) = \Delta'({\mu}) \vec{u}_1(\bar\mu).
\end{equation}
Since $\mathfrak{U}(\bar\lambda)^*$ is a $p\times p$ matrix, we now know its spectrum:
\begin{equation}\label{spectrum}
\sigma(\mathfrak{U}(\bar\lambda)^*)=\{\Delta'(\Delta^{-1}(\lambda))\} = \{ \Delta'(f_1(\lambda)), \ldots, \Delta'(f_p(\lambda))\}.
\end{equation}
Indeed, by perturbation theory this equality is true even if some of the points $\{ \Delta'(f_1(\lambda)), \ldots, \Delta'(f_p(\lambda))\}$ coincide. Note that this happens if and only if $\lambda=\Delta(\gamma_j)$ for some $j$.

Thus the characteristic polynomial of $\mathfrak{U}(\bar\lambda)^*$ is
$$
\det(\mathfrak{U}(\bar\lambda)^*-t \bdone) = \prod_{k=1}^p (\Delta'(f_k(\lambda))-t).
$$
At $t=0$, using \eqref{disc_fact}:
\begin{equation}\label{d_prime}
\begin{aligned}
\det \mathfrak{U}(\lambda) = \det \mathfrak{U}(\bar\lambda)^* = \prod_{k=1}^p \Delta'(f_k(\lambda)) &= \prod_{k=1}^p pc_0\prod_{j=1}^{p-1} (f_k(\lambda)-\gamma_j)
%= \prod_{j=1}^{p-1} \prod_{k=1}^p (f_k(\lambda)-\gamma_j)
\\
&= p^{p} \prod_{j=1}^{p-1} (\lambda-\Delta(\gamma_j)).
\end{aligned}
\end{equation}
This establishes \eqref{p4_ee2.15} (alternatively one can directly see that \eqref{spectrum} contains zero if and only $\lambda=\Delta(\gamma_j)$ for some $j$, and then count the degree of the polynomials).

%This shows that $\vec{u}_1(\mu)$ is an eigenvector of $[S_{11}+ (\lambda-T_{11})T_{21}^{-1} S_{21}]^*$ if $\lambda=\Delta({\mu})$, and it is actually in the kernel if $\Delta'(\mu)=0$, i.e., $\mu\in \{\gamma_j\}_{j=1}^{p-1}$. Note that $T_{21}^{-1}S_{21}$ is a matrix with $0$'s on and below the main diagonal with positive elements right above it, which  implies that the degree of the polynomial $\det(S_{11}+(\lambda\bdone-T_{11}) T_{21}^{-1}S_{21})$ is $p-1$. This establishes \eqref{p4_ee2.15}.

Note that the system of vectors $\{(1,p_1(z_j),\cdots,p_{p-1}(z_j))\}_{j=1}^k$ is linearly independent if and only if all the points $z_j$ are distinct: easy use of Vandermonde determinant and the fact that $p_n$ is of degree $n$. Therefore if $\lambda\notin \{ \Delta(\gamma_j) \}_{j=1}^{p-1}$, then vectors $\vec{v}_j(\lambda)=(1,p_1(f_j(\lambda)),\cdots,p_{p-1}(f_j(\lambda)))^*$ form a basis of $\bbC^p$, and \eqref{p4_ee2.16} is trivial.

Suppose $\lambda_0=\Delta(\gamma_k)$ for some $k$. We showed in \eqref{eigen} that each $\vec{v}_j(\bar\lambda_0)$, $1\le j\le p$, is an eigenvector of $\mathfrak{U}(\bar\lambda_0)^*$ with eigenvalue $\Delta'(f_j(\lambda_0))$. Now let us apply Lemma~\ref{perturb} to $\mathfrak{U}(\bar\lambda)^*$ around the point $\lambda_0$.  Note that in place of the Lemma's $\mathfrak{U}(z)$, $z_0$, $\lambda_0$, we feed $\mathfrak{U}(\bar\lambda)^*$, $\lambda_0$, $0$, respectively, hoping it will not cause confusion. Note that  if $\Delta'(f_j(\lambda_0))=0$ for some $j$, then $f_j(\lambda)$ is one of the two branches of a multivalued analytic function with branching degree $2$ around $\lambda_0$. This is because each $\gamma_j$ is a \textit{simple} zero of $\Delta'$ (follows from Lemma~\ref{discriminant}(i)). Thus (E1) of Lemma~\ref{perturb} is satisfied. The linear independence in (E2) of Lemma~\ref{perturb} follows from the above-mentioned fact that the system $\{(1,p_1(z_j),\cdots,p_{p-1}(z_j))\}_{j=1}^k$ is linearly independent if and only if all the points $z_j$ are distinct.

Therefore we can conclude that the Jordan form of $\mathfrak{U}(\bar\lambda_0)^*$ consists of $1\times 1$ blocks corresponding to non-zero eigenvalues and $2\times 2$ blocks corresponding to zero eigenvalues. It is clear that for such matrices the range is precisely equal to the span of all eigenvectors. Thus,
\begin{equation*}
\ran \mathfrak{U}(\bar\lambda_0)^* = \spann \{\vec{v}_1(\bar\lambda_0),\cdots,\vec{v}_p(\bar\lambda_0)\}
\end{equation*}
holds. This implies \eqref{p4_ee2.16}, since $\ker \mathfrak{U}(\bar\lambda_0)=(\ran \mathfrak{U}(\bar\lambda_0)^*)^\perp$ and $\lambda_0=\bar\lambda_0$.

Finally, the poles of $\mathfrak{U}(\lambda)^{-1}$ at $\lambda=\Delta(\gamma_j)$ are simple by \eqref{p4_ee2.15}, \eqref{p4_ee2.16}, and Lemma~\ref{p4_lm0}.
\qed \end{proof}
%\begin{remark}
%It is clear from the proof that Lemma \ref{p4_may5} is a just a special case of the following fact: for any polynomials $r_1, r_2$ of degrees $k_1>k_2$, $\det(S_{11}+(\lambda\bdone-T_{11}) T_{21}^{-1}S_{21})=c \prod_{j=1}^{k_2} (\lambda-r_1(\zeta_j))$, where $S_{ij}$ and $T_{ij}$ are the $k_1\times k_1$ blocks of $r_2(\calJ)$ and $r_1(\calJ)$, respectively, and $\zeta_j$ are the zeros of $r_2$.
%\end{remark}

We now know everything we need about $S_{11}+\mathfrak{p}_1(\lambda)S_{21}$. We also need to analyze the right-hand side of \eqref{p4_ems}. Let us assign it a name:
\begin{equation}\label{funcL}
\mathfrak{L}(\lambda)= \sum_{j=1}^p \left[ m(\widetilde{f}_j(\lambda))-m^\sharp(\widetilde{f}_j(\lambda)) \right]
\left( \begin{array}{cccc} 1&p_1&\cdots & p_{p-1} \\ p_1&p^2_1&\cdots & p_1p_{p-1} \\ \vdots&\vdots&\ddots&\vdots \\ p_{p-1}&p_1p_{p-1} &\cdots & p^2_{p-1} \end{array}
\right) (f_j(\lambda)).
\end{equation}

\begin{lemma}\label{p4_may6} The following holds:
$$
\det\mathfrak{L}(\lambda)=\left(\tfrac{p^{p}}{c_0^p} \prod_{j=1}^{p-1} a_j^{-2(p-j)} \right) \prod_{j=1}^p \left( m(\widetilde{f}_j(\lambda))-m^\sharp(\widetilde{f}_j(\lambda)) \right) \prod_{j=1}^{p-1} (\lambda-\Delta(\gamma_j)) .
$$
If $\lambda=\widetilde\Delta(\gamma_k)$ and all $m(\widetilde{f}_j(\lambda))-m^\sharp(\widetilde{f}_j(\lambda))$ are regular and non-zero, then
\begin{equation}\label{kerL}
\ker\mathfrak{L}(\lambda)= \spann \{\vec{v}_1(\lambda),\cdots,\vec{v}_p(\lambda)\}^\perp,
\end{equation}
where $\vec{v}_j(\lambda)=(1,p_1(f_j(\lambda)),\cdots,p_{p-1}(f_j(\lambda)))^*$, and $\mathfrak{L}(\lambda)^{-1}$ has simple poles at these points.
\end{lemma}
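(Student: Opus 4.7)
The plan is to exploit the factorization
$$\mathfrak{L}(\lambda) \;=\; V(\lambda)\,D(\lambda)\,V(\lambda)^T,$$
where $V(\lambda)$ is the $p\times p$ matrix with $(i,j)$-entry $p_{i-1}(f_j(\lambda))$, and $D(\lambda)=\diag(d_1(\lambda),\dots,d_p(\lambda))$ with $d_j(\lambda):=m(\widetilde f_j(\lambda))-m^\sharp(\widetilde f_j(\lambda))$. This factorization is immediate from the definition \eqref{funcL}: each summand there is exactly the rank-one outer product $\vec{w}_j\vec{w}_j^T$ where $\vec{w}_j$ is the $j$-th column of $V$. Therefore $\det\mathfrak{L}(\lambda) = (\det V(\lambda))^2\prod_{j=1}^p d_j(\lambda)$.

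To evaluate $\det V(\lambda)$, I would write $V(\lambda)=L\cdot W(\lambda)$, where $W_{ij}=f_j(\lambda)^{i-1}$ is a Vandermonde matrix and $L$ is the constant lower-triangular change-of-basis matrix from monomials $1,x,\dots,x^{p-1}$ to $p_0,\dots,p_{p-1}$. Since $p_n$ has leading coefficient $(a_1\cdots a_n)^{-1}$, a direct combinatorial count of how often each $a_j^{-1}$ appears on the diagonal gives $\det L = \prod_{j=1}^{p-1}a_j^{-(p-j)}$. For the Vandermonde part, I would differentiate \eqref{disc_fact} to get $\Delta'(f_k(\lambda))=c_0\prod_{j\ne k}(f_k(\lambda)-f_j(\lambda))$; taking the product over $k$ and reorganising ordered pairs into unordered ones converts $\prod_k\Delta'(f_k(\lambda))$ into $c_0^p(\det W)^2$ up to a sign. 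By Lemma~\ref{p4_may5}, $\prod_k\Delta'(f_k(\lambda))=\det\mathfrak{U}(\lambda)=p^p\prod_{j=1}^{p-1}(\lambda-\Delta(\gamma_j))$, and combining all pieces produces the claimed determinant formula.

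For the kernel assertion at $\lambda_0:=\widetilde\Delta(\gamma_k)$: because $\gamma_k$ is a \emph{simple} zero of $\Delta'$ (Lemma~\ref{discriminant}(i)), exactly two of the branches $f_j$ coalesce at the common value $\gamma_k$ over $\lambda_0$, and the remaining $p-2$ branches take pairwise distinct values. The Vandermonde argument already used in Lemma~\ref{p4_may5} then yields $\rank V(\lambda_0)=p-1$. Under the hypothesis that each $d_j(\lambda_0)$ is regular and nonzero, $D(\lambda_0)$ is invertible, while the determinant formula established above shows that $\det\mathfrak{L}(\lambda)$ has a simple zero at $\lambda_0$; hence $\rank\mathfrak{L}(\lambda_0)=p-1$ exactly. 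Now $\ker V(\lambda_0)^T$ is automatically a $1$-dimensional subspace of $\ker\mathfrak{L}(\lambda_0)$, which also has dimension $1$, so the two coincide; unpacking $\ker V(\lambda_0)^T$ gives precisely $\spann\{\vec v_1(\lambda_0),\dots,\vec v_p(\lambda_0)\}^\perp$ as claimed. The simple-pole statement for $\mathfrak{L}(\lambda)^{-1}$ then follows from the combination of a simple determinantal zero with the exact rank $p-1$ at $\lambda_0$, via Lemma~\ref{p4_lm0}, in direct analogy with the last sentence of the proof of Lemma~\ref{p4_may5}.

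The main technical nuisance I anticipate is sign-tracking in Step~2 (a $(-1)^{p(p-1)/2}$ factor arises when converting $\prod_k\prod_{j\ne k}(f_k-f_j)$ into $\prod_{i<k}(f_k-f_i)^2$); since the target formula contains no such sign, either a convention in the ordering of the $f_j$ absorbs it or a compensating sign appears in the identification with $\det\mathfrak{U}$, but this is purely mechanical. Conceptually, the delicate point is ensuring that at the branch point no ``accidental'' further drop in rank occurs beyond the Vandermonde prediction, and this is handled cleanly by the simple-zero information already produced by the determinant formula.
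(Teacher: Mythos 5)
Your proof of the determinant formula is essentially the paper's: the factorization $\mathfrak{L} = VDV^T$ is exactly the rank-one-sum structure that the paper exploits, $\det V$ is reduced to Vandermonde $\times$ leading coefficients, and the identification with $\prod_k\Delta'(f_k(\lambda))$ via differentiating \eqref{disc_fact} and reusing the computation in Lemma~\ref{p4_may5} is the same chain. The sign issue you flag is present in the paper's manipulation too (passing from $\prod_{j<s}(f_j-f_s)^2$ to $\prod_{j\ne s}(f_j-f_s)$ loses a $(-1)^{p(p-1)/2}$, and the paper silently equates them); since you follow the same steps, you land on the same formula.

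The kernel argument, however, has a genuine gap. You deduce $\rank V(\lambda_0)=p-1$ and that $\det\mathfrak{L}$ has a \emph{simple} zero at $\lambda_0=\widetilde\Delta(\gamma_k)$ from the fact that $\gamma_k$ is a simple zero of $\Delta'$, asserting that ``exactly two of the branches $f_j$ coalesce ... and the remaining $p-2$ branches take pairwise distinct values.'' That last clause need not be true. It is entirely possible that $\Delta(\gamma_{k'})=\Delta(\gamma_k)$ for some $k'\neq k$ (e.g.\ whenever $\fre$ is symmetric about a point and $p\geq 4$, the even parts of the gap structure force coincident critical values), in which case two disjoint pairs of branches coalesce over $\lambda_0$ and the determinant has a zero of order $\geq 2$. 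Then $\rank V(\lambda_0)<p-1$, $\dim\ker\mathfrak{L}(\lambda_0)>1$, and your dimension-matching argument breaks down. The paper avoids this by never assuming the zero is simple: it proves the inclusion $\ker\mathfrak{L}(\lambda_0)\supseteq\spann\{\vec{v}_1,\dots,\vec{v}_p\}^\perp$ directly from the rank-one structure, then bounds $\dim\ker\mathfrak{L}(\lambda_0)$ above by the order of the zero of $\det\mathfrak{L}$ (via the Smith--McMillan form), which it identifies with $p-\#\{f_1(\lambda_0),\dots,f_p(\lambda_0)\}$, precisely the dimension of the right-hand side of \eqref{kerL}. Your $VDV^T$ decomposition could be made to yield the same conclusion (since $\ker V^T\subseteq\ker\mathfrak{L}$ and $\dim\ker V^T = p - \#\{f_j(\lambda_0)\}$ by the Vandermonde criterion, paired with the Smith--McMillan upper bound), but as written the argument only covers the generic case where $\Delta(\gamma_k)$ is an isolated critical value; and the simple-pole conclusion then needs Lemma~\ref{p4_lm0} applied with the correct (possibly $>1$) zero order, not the hard-coded order $1$.
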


\begin{proof}
Let $\eta_j=m(\widetilde{f}_j(\lambda))-m^\sharp(\widetilde{f}_j(\lambda))$. Then %determinant $\det\mathfrak{L}(\lambda)$ can be computed as follows:
\begin{align*}\label{1111}
\det\mathfrak{L}(\lambda) &= \det \sum_{j=1}^p \eta_j \, [p_{k-1}(f_j(\lambda))p_{s-1}(f_j(\lambda))]_{k,s=1}^p \\
 &= \det \left[\sum_{j=1}^p \eta_j p_{k-1}(f_j(\lambda))p_{s-1}(f_j(\lambda))\right]_{k,s=1}^p
\\
 &=\det\left( \left[ \eta_j p_{k-1}(f_j(\lambda))\right]_{k,j=1}^p  \left[  p_{s-1}(f_j(\lambda))\right]_{j,s=1}^p \right) \\
 &=  \left(\det \left[  p_{s-1}(f_j(\lambda))\right]_{j,s=1}^p \right)^2 \prod_{j=1}^p \eta_j.
\end{align*}
Since $p_j$ is of degree $j$, by performing elementary row operations we can reduce $\det \left[  p_{s-1}(f_j(\lambda))\right]_{j,s=1}^p $ to the Vandermonde determinant times the product of the leading coefficients of $1,p_1,\ldots,p_{p-1}$. Using \eqref{leading}, we get
\begin{align*}
\det\mathfrak{L}(\lambda) &= \prod_{j=1}^p \eta_j \prod_{j=1}^{p-1} (a_1\ldots a_j)^{-2} \prod_{j<s} (f_j(\lambda)-f_s(\lambda))^2 \\
&= \prod_{j=1}^p \eta_j \prod_{j=1}^{p-1} a_j^{-2(p-j)}
\prod_{j=1}^p \mathop{\prod_{s=1}^p}_{s\ne j} (f_j(\lambda)-f_s(\lambda)).
\end{align*}
Now observe that $$\mathop{\prod_{s=1}^p}_{s\ne j} (f_j(\lambda)-f_s(\lambda))=\tfrac{1}{c_0} \Delta'(f_j(\lambda))$$ by \eqref{p4_ee2.13}, and so the determinant is equal to
\begin{equation*}
\tfrac{1}{c_0^p} \prod_{j=1}^p \eta_j \prod_{j=1}^{p-1} a_j^{-2(p-j)} \prod_{j=1}^p \Delta'(f_j(\lambda))=  \tfrac{p^{p}}{c_0^p} \prod_{j=1}^{p-1} a_j^{-2(p-j)} \prod_{j=1}^p \eta_j \prod_{s=1}^{p-1}(\lambda-\Delta(\gamma_s)),
\end{equation*}
where in the last step we reused the computations from \eqref{d_prime}. This proves the first statement of the lemma.

Suppose that $\lambda=\Delta(\gamma_k)$. That any vector orthogonal to $\{\vec{v}_1(\lambda),\cdots,\vec{v}_p(\lambda)\}$ must be in the kernel is clear, since the $j$-th row of the matrix in \eqref{p4_ems} is obtained from its first row by multiplication by $p_{j-1}$. Therefore
\begin{equation}\label{supset}
\ker\mathfrak{L}(\lambda)\supseteq \spann \{\vec{v}_1(\lambda),\cdots,\vec{v}_p(\lambda)\}^\perp.
\end{equation}
Note that $\dim\ker\mathfrak{L}(\lambda)$ is less than or equal to the order of $\lambda$ as the root of $\det\mathfrak{L}(\lambda)$ (it could be strictly less if one of the $\kappa$'s is $\ge2$ in Lemma~\ref{p4_lmSM}). But this order is precisely equal to $p$ minus the cardinality of $\{ f_1(\lambda ),\ldots,f_p(\lambda) \}$. This implies that
\begin{equation}\label{subset}
\dim\ker\mathfrak{L}(\lambda) \le \dim \spann \{\vec{v}_1(\lambda),\cdots,\vec{v}_p(\lambda)\}^\perp.
\end{equation}
But then \eqref{supset} and \eqref{subset} imply that $\ker\mathfrak{L}(\lambda)= \spann \{\vec{v}_1(\lambda),\cdots,\vec{v}_p(\lambda)\}^\perp$.

Finally, each zero of $\mathfrak{L}(\lambda)$ is simple by Lemma \ref{p4_lm0}.
\qed \end{proof}

\begin{lemma}\label{p4_may3} If $m$ and $\mathfrak{m}_\Delta$ have meromorphic continuations to $\calS_R$ and $\calR_R$, respectively, then for $\lambda\in \pi^{-1}(F_R)$,
\begin{equation*}\label{p4_dets}
\det\left(\mathfrak{m}_\Delta(\lambda)-\mathfrak{m}^\sharp_\Delta(\lambda)\right)=\left(\tfrac{1}{c_0^p}\prod_{j=1}^p a_j^{-2(p-j)}\right) \prod_{j=1}^p \left( m(\widetilde{f}_j(\lambda))-m^\sharp(\widetilde{f}_j(\lambda)) \right).
\end{equation*}
\end{lemma}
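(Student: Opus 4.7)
The plan is direct: this is essentially a determinantal consequence of the matrix identity \eqref{p4_ems} from Lemma~\ref{p4_may2}, combined with the determinant formulas from Lemmas~\ref{p4_may5} and~\ref{p4_may6}.

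First, I would rewrite \eqref{p4_ems} compactly as
\begin{equation*}
\left[\mathfrak{m}_\Delta(\lambda) - \mathfrak{m}_\Delta^\sharp(\lambda)\right]\mathfrak{U}(\lambda) = \mathfrak{L}(\lambda),
\end{equation*}
using the notation \eqref{funcU} and \eqref{funcL}. Taking determinants of both sides gives
\begin{equation*}
\det\!\left[\mathfrak{m}_\Delta(\lambda) - \mathfrak{m}_\Delta^\sharp(\lambda)\right] \cdot \det\mathfrak{U}(\lambda) = \det\mathfrak{L}(\lambda).
\end{equation*}
Now I would substitute the two determinant formulas already in hand: from Lemma~\ref{p4_may5},
\begin{equation*}
\det\mathfrak{U}(\lambda) = p^{p}\prod_{j=1}^{p-1}(\lambda-\Delta(\gamma_j)),
\end{equation*}
and from Lemma~\ref{p4_may6},
\begin{equation*}
\det\mathfrak{L}(\lambda) = \tfrac{p^p}{c_0^p}\prod_{j=1}^{p-1}a_j^{-2(p-j)}\prod_{j=1}^p\bigl(m(\widetilde{f}_j(\lambda))-m^\sharp(\widetilde{f}_j(\lambda))\bigr)\prod_{j=1}^{p-1}(\lambda-\Delta(\gamma_j)).
\end{equation*}
Dividing, the common factors $p^p\prod_{j=1}^{p-1}(\lambda-\Delta(\gamma_j))$ cancel, and using that the $j=p$ term $a_p^{-2(p-p)}=1$ trivially extends the product $\prod_{j=1}^{p-1}$ to $\prod_{j=1}^{p}$, one obtains exactly the stated formula.

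The only point requiring a word of justification is that the cancellation is legitimate only where $\det\mathfrak{U}(\lambda)\ne 0$, i.e., away from $\lambda=\Delta(\gamma_j)$, $j=1,\ldots,p-1$. However, both sides of the claimed identity are meromorphic functions of $\lambda\in\pi^{-1}(F_R)$ (the left-hand side by the assumed meromorphic continuation of $\mathfrak{m}_\Delta$, the right-hand side by the meromorphic continuation of $m$), and the identity holds on the complement of a discrete set. By analytic continuation it holds throughout $\pi^{-1}(F_R)$. No real obstacle arises: the entire argument is bookkeeping, with the heavy lifting already done in Lemmas~\ref{p4_may5} and~\ref{p4_may6}.
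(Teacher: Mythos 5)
Your proof is correct and is exactly the argument the paper intends: the paper's own proof of this lemma is simply ``Immediate from Lemmas~\ref{p4_may2}, \ref{p4_may5}, and \ref{p4_may6},'' and you have supplied precisely the routine determinant bookkeeping (taking $\det$ of $[\mathfrak{m}_\Delta-\mathfrak{m}_\Delta^\sharp]\mathfrak{U}=\mathfrak{L}$, substituting, cancelling, and extending off the discrete set $\{\Delta(\gamma_j)\}$ by meromorphic continuation) that makes ``immediate'' rigorous. Your observation that the $j=p$ factor $a_p^{-2(p-p)}=1$ reconciles the $\prod_{j=1}^{p-1}$ of Lemma~\ref{p4_may6} with the $\prod_{j=1}^{p}$ in the statement is also exactly right.
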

\begin{remark}
Note that if we take $\lambda \in\pi^{-1}(\fre)$ in the lemma, then we can recover the formula from Damanik--Killip--Simon relating the determinant of the density $\frac{d\mu_\Delta}{dx}$ of $\mu_\Delta$ and the density $\frac{d\mu}{dx}$ of $\mu$ (see \cite[Prop 11.1]{DKS}). In our notation it looks as follows:
\begin{equation*}
\det \left[\frac{d\mu_\Delta(\lambda)}{d\lambda} \right]= \tfrac{1}{c_0^p}\prod_{j=1}^{p-1} a_j^{-2(p-j)} \prod_{j=1}^p \frac{d\mu}{dx}(f_j(\lambda)).
\end{equation*}
\end{remark}
\begin{proof}
Immediate from Lemmas \ref{p4_may2}, \ref{p4_may5}, and \ref{p4_may6}.
\qed \end{proof}

The next lemma will allow us to assume that $m$ satisfies conditions (P1) and (P2) stated below, which will considerably simplify the proof of the main results.

Recall that $\{\xi_j\}_{j=1}^N$ are all of the preimages $\Delta^{-1}(\Delta(\gamma_j))$, where $\gamma_j$ are the critical points of $\Delta'$.

\begin{lemma}\label{p4_may4}
Let $a_{0}>0, b_{0}\in\bbR$, and let $\calJ^{(-1)}=(a_n,b_n)_{n=0}^\infty$ be the Jacobi matrix obtained from $\calJ=(a_n,b_n)_{n=1}^\infty$ by adding one column and one row with the coefficients $a_0,b_0$. Let $m$ and $m^{(-1)}$ be the Borel transforms of the spectral measure of $\calJ$ and $\calJ^{(-1)}$, respectively. If $m$ satisfies (ii) of Theorem \ref{p4_th_merom}/\ref{p4_th_finite_merom}, then so does $m^{(-1)}$.

Moreover, for any $\veps>0$, there exist $a_0,b_0,a_{-1},b_{-1}$ such that the Jacobi matrix $\calJ^{(-2)}=(a_{n},b_{n})_{n=-1}^\infty$ (with two rows and columns added) satisfies
\begin{list}{}
{
\setlength\labelwidth{2em}%
\setlength\labelsep{0.5em}%
\setlength\leftmargin{2.5em}%
}
\item[(P1)] $m^{(-2)}$ does not have poles at any $(\xi_j)_\pm$ and band edges;
\item[(P2)] for any two poles $\zeta_1$, $\zeta_2$ of $m^{(-2)}$ in $\calS_{R-\veps}$, %\setminus\{\alpha_j,\beta_j\}_{j=1}^p$
  $\widetilde{\Delta}(\zeta_1)\ne\widetilde{\Delta}(\zeta_2)$, $\widetilde{\Delta}(\zeta^\sharp_1)\ne\widetilde{\Delta}(\zeta_2)$.
\end{list}
\end{lemma}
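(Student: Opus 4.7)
The plan is to use the classical coefficient-adding identity for Jacobi matrices: prepending a row and column with entries $a_0, b_0$ transforms the Borel transform by the $z$-dependent Möbius map
\begin{equation*}
m^{(-1)}(z) = \frac{1}{b_0 - z - a_0^2 m(z)} =: \frac{1}{F_0(z)},
\end{equation*}
and iterating once more gives $m^{(-2)} = 1/F_{-1}$ with $F_{-1}(z) = b_{-1} - z - a_{-1}^2 m^{(-1)}(z)$.

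For the first part of the lemma, I would verify conditions (a)--(d) for $m^{(-1)}$ directly from this formula. (a) is immediate, since $F_0$ inherits the meromorphic continuation of $m$. For (b), note that on the interior of each band $\imag m$ has definite nonzero sign on each sheet (strict positivity of the a.c.\ density, which is implied by (c) for $m$), so $F_0 \neq 0$ there; at a band edge $z_0$, expand $m = m_0 + m_1 w + O(w^2)$ in the local coordinate $w = \sqrt{z-z_0}$ with $m_j \in \bbR$, and observe that (c) for $m$ forces $m_1 \ne 0$ (since $m - m^\sharp = 2m_1 w + O(w^3)$ has simple zero exactly when $m_1 \ne 0$), so any zero of $F_0$ at a branch point is simple in $w$, giving at most a simple pole of $m^{(-1)}$. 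For (c), the factorization
\begin{equation*}
m^{(-1)} - (m^{(-1)})^\sharp = \frac{a_0^2 (m - m^\sharp)}{F_0 \cdot F_0^\sharp}
\end{equation*}
transfers the zero structure to $m^{(-1)} - (m^{(-1)})^\sharp$, after verifying by local computation that cancellations at poles of $m$ leave the expression finite and nonzero (using (d) for $m$ to exclude simultaneous poles of $m$ and $m^\sharp$). For (d), if $m^{(-1)}$ had poles at both $z$ and $z^\sharp$ with $z \in \pi^{-1}(E_R \setminus \fre)$, then $F_0(z) = F_0(z^\sharp) = 0$; conjugating the second equation and subtracting would force $m(z) = m^\sharp(z)$, contradicting (c) for $m$.

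For the second part, I would argue by genericity in the four real parameters $(a_0, b_0, a_{-1}, b_{-1}) \in (0,\infty)\times\bbR\times(0,\infty)\times\bbR$. The poles of $m^{(-2)}$ in the precompact region $\calS_{R-\veps}$ are precisely the zeros of $F_{-1}$, and by a Rouché-type argument their number is uniformly bounded as the parameters vary in a small neighborhood. For (P1), the condition ``$z_0$ is a pole of $m^{(-2)}$'' is $F_{-1}(z_0) = 0$, a single real-analytic equation depending non-trivially on $b_{-1}$; over the finite set of prescribed bad points $z_0 \in \{(\xi_j)_\pm\} \cup \{\text{band edges}\}$, this gives a union of proper analytic subvarieties whose complement is open dense. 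For (P2), each pole $\zeta$ of $m^{(-2)}$ in $\calS_{R-\veps}$ depends real-analytically on the parameters (by the implicit function theorem applied to $F_{-1}$ at each simple zero), and the coincidence conditions $\widetilde\Delta(\zeta_1) = \widetilde\Delta(\zeta_2)$ and $\widetilde\Delta(\zeta_1^\sharp) = \widetilde\Delta(\zeta_2)$ again cut out proper analytic subvarieties of parameter space; intersecting the finitely many resulting open dense sets yields an open dense set of admissible $(a_0, b_0, a_{-1}, b_{-1})$.

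The hard part, I expect, is verifying the non-degeneracy in (P2) — namely that as $(a_{-1}, b_{-1})$ vary, any two distinct poles of $m^{(-2)}$ can be perturbed independently so that the coincidence loci are genuinely proper. This should follow from the affine dependence of $F_{-1}$ on $(a_{-1}, b_{-1})$ combined with the implicit function theorem at simple zeros, but care is needed at any higher-order zeros of $F_{-1}$ that might arise on a lower-dimensional parameter stratum, where one may need to perturb $(a_0, b_0)$ first before invoking the argument in $(a_{-1}, b_{-1})$.
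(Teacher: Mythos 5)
Your treatment of the first part — verifying (ii)(a)--(d) for $m^{(-1)}$ via the Möbius identity $m^{(-1)} = 1/(b_0 - z - a_0^2 m)$ and the factorization $m - m^\sharp = (m^{(-1)} - m^{(-1)\,\sharp})/(a_0^2 m^{(-1)} m^{(-1)\,\sharp})$ — is correct and matches the paper's argument in substance; your band-edge analysis via the local coordinate $w = \sqrt{z - z_0}$ and the observation that (ii)(c) forces $m_1 \neq 0$ is a clean way to phrase what the paper does via order-counting in the pole/zero bookkeeping.

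The second part has a genuine gap, which you yourself flag as ``the hard part'' but do not close. The Baire/genericity framework is sound in outline, and (P1) does indeed cut out proper subvarieties (each exceptional condition excludes finitely many $b_0$). But for (P2) the heart of the matter is exactly the properness of the coincidence locus $\{\,\widetilde{\Delta}(\zeta_1) = \widetilde{\Delta}(\zeta_2)\,\}$, and neither ``affine dependence of $F_{-1}$ on $(a_{-1},b_{-1})$'' nor the implicit function theorem at simple zeros actually establishes this. The implicit function theorem tells you each pole moves analytically with the parameters; it does not tell you that two poles can be moved independently off the diagonal $\widetilde{\Delta}(\zeta_1) = \widetilde{\Delta}(\zeta_2)$, because both poles are solutions of the \emph{same} scalar equation $F_{-1}(z) = 0$ and hence are tightly coupled. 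The paper's argument has a key additional mechanism that your sketch misses: if, for a fixed $a_0$, the coincidence $\widetilde{\Delta}(\zeta_1(t)) = \widetilde{\Delta}(\zeta_2(t))$ persists for uncountably many $t = b_0$, then by analytic continuation one obtains a functional identity
\begin{equation*}
a_0^2\left( m(\widetilde{f}_1(\lambda)) - m(\widetilde{f}_2(\lambda)) \right) = f_2(\lambda) - f_1(\lambda)
\end{equation*}
on a neighborhood of $\lambda_0$, and the crucial observation is that the left side scales with $a_0^2$ while the right side does not, so this identity can hold for at most one value of $a_0$. Choosing $a_0$ away from that single bad value then leaves only countably many exceptional $t$, which can be avoided. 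Without this (or an equivalent) scaling/rigidity observation you have no proof that the bad locus is proper — it is genuinely possible for the coincidence to hold identically along a one-parameter family, as the paper's phrase ``this may, in fact, happen'' warns. Moreover, the paper first passes from $m$ to a one-step-extended $m$-function with only simple poles (by choosing $b_0$ large and $a_0$ small) precisely so that the subsequent pole-tracking is unambiguous; your remark that ``one may need to perturb $(a_0,b_0)$ first'' gestures at this but does not carry it out, and in particular you do not justify that after the first extension the poles in $\calS_{R-\veps}$ are all simple and finite in number, which is what makes the genericity count go through.
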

%\begin{remark} It seems $k=1$ should be sufficient, but would overcomplicate the proof for no reason.
%\end{remark}
\begin{proof}
Suppose that $m$ satisfies (ii) of Theorem \ref{p4_th_merom}. By the recursion
\begin{equation}\label{p4_m-recur}
a_{0}^2 m(z)=-z+b_{0}-{m^{(-1)}(z)}^{-1}
\end{equation}
we can extend $m^{(-1)}$ to the same domain as $m$. So $m^{(-1)}$ satisfies (ii)(a) of Theorem~\ref{p4_th_merom}. Moreover,
\begin{equation}\label{p4_m-m}
m(z)-m^\sharp(z)=\frac{m^{(-1)}(z)-m^{(-1)}{}^\sharp(z)}{a_0^2 m^{(-1)}(z)m^{(-1)}{}^\sharp(z)} \quad \mbox{ for } z\in \pi^{-1}(E_R).
\end{equation}

Assume $m^{(-1)}(z)$  has a pole at a point in $\pi^{-1}(\fre\setminus\cup_{j=1}^p \{\alpha_j,\beta_j\})$. Then \eqref{p4_m-recur} implies that  $m$ is real at this point, which violates (ii)(c). Assume $m^{(-1)}(z)$  has a pole of order $k\ge2$ at a band edge $z\in\pi^{-1}(\cup_{j=1}^p \{\alpha_j,\beta_j\})$. Then $m^{(-1)}{}^\sharp(z)$ has the same order pole at this point, and therefore $m^{(-1)}(z)-m^{(-1)}{}^\sharp(z)$ has a pole of order at most $k$. Now \eqref{p4_m-m} implies that $m-m^\sharp$ has a zero of order at least $2k-k\ge2$, contradicting (ii)(c) for $m$. Thus $m^{(-1)}$ satisfies (ii)(b).

Assume $m^{(-1)}(z)$ and $m^{(-1)}{}^\sharp(z)$ are both regular and $m^{(-1)}(z)-m^{(-1)}{}^\sharp(z)=0$, for some $z$ not at a band edge. Then \eqref{p4_m-m} implies that $m$ violates (ii)(c) or (ii)(d) of Theorem \ref{p4_th_merom}, a contradiction. Thus $m^{(-1)}$ satisfies (ii)(c) for $z$ not at a band edge.

Let us verify (ii)(c) for $m^{(-1)}$ at a band edge.

Assume $m^{(-1)}$ is finite and non-zero at a band edge. Then so is $m^{(-1)}{}^\sharp$, and then $m^{(-1)}-m^{(-1)}{}^\sharp = a_0^2(m-m^\sharp)m^{(-1)} m^{(-1)}{}^\sharp$ has at most first order pole there.

Now let $m^{(-1)}$ have a zero of order $k\ge1$ at $z_0\in\pi^{-1}(\cup_{j=1}^p \{\alpha_j,\beta_j\})$. Then \eqref{p4_m-recur} shows that necessarily $k=1$. This means that locally around $z_0$,
$$
m^{(-1)}(z)=s_1{\sqrt{z-z_0}} + s_2 (z-z_0) + O(z-z_0)^{3/2}
$$
for a non-zero constant $s_1$.
But then $m^{(-1)}{}^\sharp(z)=-s_1{\sqrt{z-z_0}} + s_2 (z-z_0) - O(z-z_0)^{3/2}$, and so $m^{(-1)}(z)-m^{(-1)}{}^\sharp(z) = 2s_1{\sqrt{z-z_0}} + O(z-z_0)^{3/2}$ has first order zero too.

Lastly, assume $m^{(-1)}$ has a pole at a band edge. We showed that then this pole is simple. Again, $m^{(-1)}{}^\sharp$ has a first order pole with the  coefficient near $\tfrac{1}{\sqrt{z-z_0}}$ being negative to that of $m^{(-1)}$. Therefore $m^{(-1)}-m^{(-1)}{}^\sharp$ still has a first order pole. Thus is does not vanish, and so $m^{(-1)}$ satisfies (ii)(c).

%Now note that if $m$ has a zero/pole of order $k\ne0$ at a band edge, then so does $m^\sharp$ with the leading coefficient being negative of  that of $m$ ($\sqrt{\Delta^2-4}$ changes sign when we change sheets). Therefore $m-m^\sharp$ also has a zero/pole of order $k$. This and \eqref{p4_m-m} shows that if $m^{(-1)}$ has a zero/pole of order $k\ne0$ there, then $m$ has a zero/pole of order $-k\ne0$. But (ii)(c)--(ii)(d) for $m$ say that $|k|$ can be at most $1$. Thus $m^{(-1)}$ satisfies (ii)(b)--(ii)(c) for the band edge points too.

Finally, let us check (ii)(d) for $m^{(-1)}$. Assume $m^{(-1)}(z)$ and $m^{(-1)}{}^\sharp(z)$ both have a pole at $z_0\in\pi^{-1}(E_R\setminus\fre)$. Then by \eqref{p4_m-recur} $m(z_0)=m^\sharp(z_0)=\frac{b_0-\pi(z_0)}{a_0^2}$, which means that $m$ violates (ii)(c). Contradiction.

\smallskip

Let us prove the ``moreover'' part of the lemma now. Note that all the poles of $m^{(-1)}$ occur at the points where $a_0^2 m(z)=b_0-z$. Denote the finite number of distinct poles of $m$ in $\calS_{R-\veps}$ by $\{z_j\}_{j=1}^K$. Let $M_1=\max_j |\pi(z_j)|$. Choose small $\delta>0$ such that the $\delta$-neighborhoods $U_\delta(z_j)=\{z: |z-z_j|<\delta\}$ of these points are disjoint and lie inside $\calS_{R-\veps}$. Let
$$
M_2 =\sup_{z\in \calS_{R-\veps}\setminus \cup_{j=1}^K U_\delta(z_j)} |m(z)|.
$$
%to be larger than the supremum of $|m(z)|$ over all $z$ in $\calS_{R-\veps}$ not in these neighborhoods.

Let $b_0(t)=M_1+M_2+t$ for $t \gg 0$, and choose any $a_0$ satisfying $0<a_0<\sqrt{\delta/M_2}$. For each such $a_0, b_0(t)$ let $m^{(-1)}(a_0,b_0(t))$ be the $m$-function of $\calJ^{(-1)}=(a_n,b_n)_{n=0}^\infty$. Note that if $z$ is not in one of $U_\delta(z_j)$ or $U_\delta(b_0(t)_+)$, then $z$ cannot be a pole of $m^{(-1)}(a_0,b_0(t))$. Indeed, for such $z$, $|a_0^2 m(z)| \le \delta < |b_0-z|$. Note that for large $t$, $b_0(t)_-$ is not in $\calS_{R-\veps}$, and we can ignore $U_\delta(b_0(t)_-)$.

Let $a_0^2 m(z)+z$ around each $z_j$ be locally $k_j$-to-$1$ (where $k_j \ge1$ is the order of the pole of $m$ at $z_j$). Therefore assuming $t$ is large enough, we will have precisely $k_j$ distinct solutions to $a_0^2 m(z)+z=b_0(t)$ in each $U_\delta(z_j)$%(they are distinct since $z_j$ itself cannot be a solution)
, i.e., there are precisely $k_j$ distinct first order poles of $m^{(-1)}(a_0,b_0(t))$ in each $U_\delta(z_j)$.

For large enough $t$ there will be exactly one solution to $a_0^2 m(z)=b_0(t)-z$ in $U_\delta(b_0(t)_+)$. Indeed,  $m$ is monotonically increasing to zero as $\bbR\ni z\to+\infty$ (see \eqref{intro1}). Therefore for large $t$, $a_0^2 m(z)=b_0(t)-z$ will have exactly one real solution in $U_\delta(b_0(t)_+)$. Since any pole of $m^{(-1)}$ on $\calS_+$ must be real, there must be a unique pole of $m^{(-1)}$ in $U_\delta(b_0(t)_+)$.

Thus there are precisely $1+\sum_{j=1}^K k_j$ first order poles of $m^{(-1)}(a_0,b_0(t))$ in $\calS_{R-\veps}$, which are distinct for any $t$ large enough. Denote the locations of these poles by $\zeta_j(t)$ (note that each $\zeta_j(t)$ is a continuous function).

The restriction (P1) requires only $b_0(t)\ne a_0^2 m((\xi_j)_\pm)+\xi_j$, and $b_0(t)\ne a_0^2 m(\alpha_j)+\alpha_j$, $b_0(t)\ne a_0^2 m(\beta_j)+\beta_j$, which excludes only a finite number of allowable $b_0(t)$ for each $a_0$.

Thus choosing $t$ large enough will always ensure that $m^{(-1)}(a_0,b_0(t))$ satisfies (P1) and has only first order poles in $\calS_{R-\veps}$.
%any other $b_0(t)$ therefore produces $\calJ^{(-1)}$ satisfying ($\dagger$) and having only first order poles in $\calS_{R-\veps}$. %Notice also that the poles $\{z_j\}$ of $\calJ^{(-1)}$ cannot lie in $\pi^{-1}(\fre)$ (no eigenvalues in the interior of $\fre$ by (ii)(b), and we arranged that $m^{(-1)}$ has no first-order pole at the band edges too).
Performing this procedure and renaming $m^{(-1)}$ to $m$, we may now assume that $m$ already satisfies (P1) and has only first order poles in $\calS_{R-\veps}$.%, and has no poles in $\pi^{-1}(\fre)$.

In particular, since each pole of $m$ is assumed to be simple, $k_j=1$. Therefore there are precisely $K+1$ poles of  $m^{(-1)}$ in $\calS_{R-\veps}$: one pole $\zeta_j(t)$ in each $U_\delta(z_j)$, $1\le j\le K$, and one pole $\zeta_{K+1}(t)$ in $U_\delta(b_0(t)_+)$.

Choose any pair of indices $1\le j,n\le K+1$. We will be checking which $a_0$ and $t$ would make $m^{(-1)}(a_0,b_0(t))$ satisfy (P2) for the pair of poles $\zeta_j(t), \zeta_n(t)$.

First observe that $\zeta_{K+1}(t)\to\infty_+$ when $t\to\infty$, while $\zeta_j(t)\in U_\delta(z_j)$, so if $t$ is large enough then $\zeta_{K+1}(t)$ cannot cause any trouble with respect to (P2).

Now note that if $\delta$ is small enough, then
\begin{align*}
\widetilde{\Delta}(z_j)\ne \widetilde{\Delta}(z_n) & \Rightarrow  \widetilde{\Delta}(U_\delta(z_j))\cap \widetilde{\Delta}(U_\delta(z_n))=\varnothing ,
\\
\widetilde{\Delta}(z^\sharp_j)\ne \widetilde{\Delta}(z_n) & \Rightarrow \widetilde{\Delta}(U_\delta(z_j)^\sharp)\cap \widetilde{\Delta}(U_\delta(z_n))=\varnothing.
\end{align*}
So if $m$ satisfies (P2) some $z_j$, $z_n$, then $m^{(-1)}$ satisfies (P2) for the corresponding poles $\zeta_j(t)$, $\zeta_n(t)$.

Assume that $m$ does not satisfy (P2), say for the poles $z_1$ and $z_2$. Without loss of generality we may assume
$\widetilde{\Delta}(z_1)=\widetilde{\Delta}(z_2)\equiv\lambda_0$ (the case $\widetilde{\Delta}(z^\sharp_1)=\widetilde{\Delta}(z_2)$ can be treated in the same way).

%If we pick the coefficients $a_0$ and $b_0(t)$ so that $m^{(-1)}(a_0, b_0(t))$ satisfies ($\ddagger$) for the corresponding poles in $U_\delta(z_1)$ and $U_\delta(z_2)$, then we can keep repeating this procedure to get rid of all ``resonances''.

Fix any $a_0$ ($0<a_0<\sqrt{\delta/M_2}$). Suppose that $m^{(-1)}(a_0, b_0(t))$ fails the condition (P2) for uncountably many $t$ at $\zeta_1(t)$ and $\zeta_2(t)$. Recall that $\zeta_1(t)$, $\zeta_2(t)$ are the unique solutions of $a_0^2 m(z)=b_0(t)-z$ in $U_\delta(z_1)$, $U_\delta(z_2)$, respectively. This implies
$\widetilde{\Delta}(\zeta_1(t))=\widetilde{\Delta}(\zeta_2(t))=:\lambda(t)$. This means that we can choose \textit{different} branches $\widetilde{f}_1, \widetilde{f}_2$ of $\widetilde{\Delta}^{-1}$ around $\lambda_0$ (note that $\pi(\lambda_0)$ is not critical point of $\Delta$ since (P1) holds for $m$), such that $\zeta_1(t)=\widetilde{f}_1(\lambda(t))$, $\zeta_2(t)=\widetilde{f}_2(\lambda(t))$. This implies
$$
a_0^2 ( m(\widetilde{f}_1(\lambda(t)))-m(\widetilde{f}_2(\lambda(t))) ) =  f_2(\lambda(t))- f_1(\lambda(t)).
$$
for uncountably many $t$. But then by analytic continuation we obtain
$$
a_0^2 ( m(\widetilde{f}_1(\lambda))-m(\widetilde{f}_2(\lambda)) ) = f_2(\lambda)- f_1(\lambda).
$$
for all $\lambda$ in a neighborhood of $\lambda_0$. This may, in fact, happen. However then any $a_0$ different from the chosen one would violate this condition. This means that there may be only one $a_0$ for which $\widetilde{\Delta}(\zeta_1(t))=\widetilde{\Delta}(\zeta_2(t))$ holds for uncountably many $t$. Every other $a_0$ will have at most countably many exceptions. Note, in particular, that this allows us to take $t$ as large as we need (which is important in regards to $\zeta_{K+1}(t)$, as well as to make sure that $m^{(-1)}$ still satisfies (P1)).

Since there are finitely many pairs of indices $1\le j,n\le K+1$, we can conclude that there exists a choice of $a_0$ and $t$ which works for all of them, i.e., $m^{(-1)}$ satisfies $(P2)$. Moreover, $t$ can be chosen large enough, so that $m^{(-1)}$ still satisfies $(P1)$.
%However choose now any $0<a'_0<a_0$. Either we get rid of the resonance for this $a'_0$ and some $b_0(t)$, or we again obtain
%$$
%{a'_0}^2 ( m(\widetilde{f}_1(\lambda))-m(\widetilde{f}_2(\lambda)) ) = f_2(\lambda)- f_1(\lambda)
%$$
%for all $\lambda$ in a neighborhood of $\lambda_0$. The last two equalities imply $f_2(\lambda)-f_1(\lambda)\equiv 0$ giving the contradiction.
\qed \end{proof}
%\begin{remark}
%If $m$ satisfies Theorem \ref{p4_th_finite_merom}, then the ``moreover'' part of Lemma \ref{p4_may4} reads as ``for any $R>0$ \ldots '' and condition ($\ddagger$) is satisfies for any two poles in $\calS_R$.
%\end{remark}

Finally, we will need the following result, which is the analogue of Lemma \ref{p4_convergence}.
\begin{lemma}\label{p4_convergence2}
Assume that $\calJ=(a_n,b_n)_{n=1}^\infty$ satisfies
\begin{equation*}
\limsup_{n\to\infty} \left(|a_n-a_n^{(0)}| + |b_n-b_n^{(0)}|\right)^{1/2n}  \le R^{-1},
\end{equation*}
where $\calJ^0=\big(a_n^{(0)},b_n^{(0)}\big)_{n=1}^\infty$ is a $p$-periodic Jacobi matrix in $\calT_\fre$.
Let $m$, $m^{(n)}$, and $m^0$ be the Borel transform of the spectral measure of $\calJ,\calJ^{(n)}$, and $\calJ^0$, respectively. Suppose that $m$ has a meromorphic continuation to $\calS_R$. Then $m^{(np)}(z)\to m^0(z)$ as $n\to\infty$ for any $z\in \calS_R$.
\end{lemma}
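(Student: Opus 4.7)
The plan is to reduce Lemma \ref{p4_convergence2} to its matrix-valued analog (Lemma \ref{p4_convergence}) by passing through the block Jacobi matrix $\Delta(\calJ)$. Since $\calJ^0 \in \calT_\fre$, the Damanik--Killip--Simon Magic Formula (Lemma \ref{magic}) identifies $\Delta(\calJ^0)$ with a constant-coefficient (period~1) block Jacobi matrix with $p \times p$ blocks. The scalar exponential bound $\limsup_n (|a_n - a_n^{(0)}| + |b_n - b_n^{(0)}|)^{1/2n} \le R^{-1}$ translates into exponential decay of the block-entry perturbation of $\Delta(\calJ)$ from the constant pattern $\Delta(\calJ^0)$ at the corresponding block rate, so that after the block-shift by $n$ (which corresponds to the scalar shift by $np$) the matrix Jacobi coefficients converge to those of $\Delta(\calJ^0)$.

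I would first verify that the meromorphic continuation of $m$ to $\calS_R$ induces, via Lemma \ref{p4_may1}, a meromorphic continuation of $\mathfrak{m}_\Delta$ to the corresponding region of $\calR$. Solving the identity of Lemma \ref{p4_may1} for $\mathfrak{m}_\Delta$ requires inverting $\mathfrak{U}(\lambda) = S_{11} + \mathfrak{p}_1(\lambda)S_{21}$, whose singularities are simple by Lemma \ref{p4_may5}, so the continuation exists as a meromorphic function. With the matrix-valued hypotheses thus satisfied, applying the matrix-valued Lemma \ref{p4_convergence} yields
\begin{equation*}
\mathfrak{m}_\Delta^{(n)}(\lambda) \to \mathfrak{m}_\Delta^0(\lambda)
\end{equation*}
at every regular point $\lambda$ in the relevant region of $\calR$.

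To transfer this convergence back to $m^{(np)}$, I would apply Lemma \ref{p4_may0} with $\calJ$ replaced by $\calJ^{(np)}$:
\begin{equation*}
m^{(np)}(\widetilde{f}_l(\lambda)) = \bigl(c_0 (\Delta(\calJ^{(np)}) - \lambda)^{-1} \textstyle\prod_{j\ne l}(\calJ^{(np)} - f_j(\lambda))\,\delta_1, \delta_1 \bigr).
\end{equation*}
The $(1,1)$ block entry of the resolvent of $\Delta(\calJ^{(np)})$ is $\mathfrak{m}_\Delta^{(n)}(\lambda)$, which converges to $\mathfrak{m}_\Delta^0(\lambda)$ by the previous step, while the finite-banded operator $\prod_{j\ne l}(\calJ^{(np)} - f_j(\lambda))$ has matrix entries that are polynomials in finitely many shifted coefficients $(a_{k+np}, b_{k+np})$, which converge entrywise to those of $\calJ^0$. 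Combining these two convergences yields $m^{(np)}(z) \to m^0(z)$ pointwise at regular $z \in \calS_R$.

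The main obstacle will be the exceptional points where the algebra above degenerates: critical values of $\widetilde\Delta$ where $\mathfrak{U}$ becomes singular, band edges, and poles of $m^0$ or of individual $m^{(np)}$ inside $\calS_R$. At critical values the residues are controlled via Lemma \ref{p4_may5} by the first few polynomials $p_j^{(np)}$, $q_j^{(np)}$, which depend on only a bounded number of the shifted coefficients and thus converge. Near any pole of $m^0$, or any accumulation of poles of $\{m^{(np)}\}$, pointwise convergence on $\calS_+$ together with the uniform meromorphy on $\calS_R$ and a Vitali--Montel type normal-families argument on each connected component of $\calS_R$ propagates the convergence to the full surface.
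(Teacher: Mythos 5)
Your overall strategy matches the paper's: push the problem to the block side, invoke Lemma \ref{p4_convergence} there, and pull back through the identity of Lemma \ref{p4_may0} applied to $\calJ^{(np)}$. However, there is a concrete gap in the pull-back step. You write that ``the $(1,1)$ block entry of the resolvent of $\Delta(\calJ^{(np)})$ is $\mathfrak{m}_\Delta^{(n)}(\lambda)$,'' but that identification is false: $\mathfrak{m}_\Delta^{(n)}$ is the $m$-function of the $n$-times block-stripped operator $\Delta(\calJ)^{(n)}$, whereas the $(1,1)$ block of the resolvent of $\Delta(\calJ^{(np)})$ is the $m$-function of $\Delta(\calJ^{(np)})$. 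These two block Jacobi matrices are \emph{not} equal. Because $\Delta$ is a degree-$p$ polynomial applied to a banded operator, truncating $\calJ$ at level $np$ before applying $\Delta$ produces a boundary effect in the first block, so $\Delta(\calJ^{(np)})\ne\Delta(\calJ)^{(n)}$; they agree only from the second block row and column onward. Lemma \ref{p4_convergence} gives you convergence of $\mathfrak{m}_\Delta\big(\Delta(\calJ)^{(n)}\big)$, not of $\mathfrak{m}_\Delta\big(\Delta(\calJ^{(np)})\big)$, so you cannot substitute one for the other as you do.

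The paper's proof repairs exactly this point: it observes that $\Delta(\calJ^{(np)})$ and $\Delta(\calJ)^{(n)}$ differ only in the first block entry, hence their once-stripped versions coincide, $\Delta(\calJ^{(np)})^{(1)}=\Delta(\calJ)^{(n+1)}$. Applying Lemma \ref{p4_convergence} to the latter and then using the $m$-function recursion \eqref{m_recur} (together with the convergence of the first block Jacobi parameters of $\Delta(\calJ^{(np)})$ to those of $\Delta(\calJ^0)$, which follows from convergence of finitely many shifted scalar coefficients) recovers $\mathfrak{m}_\Delta\big(\Delta(\calJ^{(np)})\big)\to\mathfrak{m}_\Delta\big(\Delta(\calJ^0)\big)$. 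Your proof needs this intermediate step. Separately, the closing appeal to a Vitali--Montel normal-families argument to cover poles is vaguer than the paper's route (which packages the right-hand side of \eqref{p4_func} as a fixed rational function of finitely many uniformly convergent analytic pieces and concludes spherical-metric convergence on compacts), but that part is a matter of detail rather than a logical error; the missing stripping identity is the real obstruction.
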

\begin{remark}
In fact, if we view $m$ as a function $\calS_R \to \bbC\cup\{\infty\}$, then the convergence is uniform on compacts {with respect to the spherical distance} on the Riemann sphere $\bbC\cup\{\infty\}$ .
\end{remark}
\begin{proof}
Since $(\calJ^0)^{(np)}=\calJ^0$, we have $\calJ^{(np)} \to \calJ^0$ in norm. This also gives us $\Delta(\calJ^{(np)}) \to \Delta(\calJ^0)$.  Note that convergence in norm implies convergence of the resolvents, which gives us $m^{(np)}(z)\to m^0(z)$, \textit{but only for $z\in\calS_+$}.

Fix any point $z\in\calS_R$. For this lemma only, let us employ the following convention. For any \textit{scalar} Jacobi matrix $\mathcal{I}$, let us write $m(\mathcal{I})$ to mean the $m$-function (i.e., the Borel transform of the spectral measure) of $\mathcal{I}$ evaluated at $z\in\calS_R$, the dependence on which we will omit for convenience. For any \textit{block} Jacobi matrix $\mathcal{I}$, let us write $\mathfrak{m}_\Delta(\mathcal{I})$ to mean the (matrix-valued) $m$-function of $\mathcal{I}$ evaluated at $\widetilde{\Delta}(z)\in\calR_R$. %(where one takes $\Delta(z)\in \calR_+$ if $z\in\calS_+$ and $\Delta(z)\in \calR_-$ if $z\in\calS_-$). %The matrix-valued $m_\Delta(\calJ)$ will of course run us into trouble when we hit a pole of one of these $m$-functions, but since we look at limits, and these limits will happen to be finite, we are fine.

Let also $m^0$ be the $m$-function of $\calJ^0$, evaluated at $z$, and $\mathfrak{m}_\Delta^0$ be the $m$-function of the free block Jacobi matrix ($\frac{-\lambda\pm\sqrt{\lambda^2-4}}{2}\bdone$), evaluated at $\widetilde{\Delta}(z)$.

%We, of course, want to show $m(\calJ^{(np)})\to m(\calJ^0)$. This holds trivially for $z\in\calS_+$, but for $\calS_-$ we need additional arguments.

Let us write \eqref{p4_func} as $m(\calJ)=g(\mathfrak{m}_\Delta(\Delta(\calJ)),\{a_j\}_{j=1}^N,\{b_j\}_{j=1}^N)$, where $g$ is a continuous function that takes one $p\times p$ matrix-valued parameter and $2N$ real parameters. %(here $\{a_j\}_{j=1}^N,\{b_j\}_{j=1}^N$ are the first Jacobi parameters of the matrix $\calJ$).
Indeed, the right-hand side of \eqref{p4_func} depends on $\mathfrak{m}_\Delta(\Delta(\calJ))$, the first orthogonal polynomial $\mathfrak{p}_1$ of $\Delta(\calJ)$, and the first column of the product $\prod_{j\ne l} (\calJ- f_j(\Delta(z)))$. The latter two objects are smooth functions (in fact, polynomials) of first $N$ Jacobi parameters $\{a_j\}_{j=1}^N,\{b_j\}_{j=1}^N$ of $\calJ$, for $N$ sufficiently large but finite. This proves that if $\calJ_k\to \calJ$ and $\mathfrak{m}_\Delta(\Delta(\calJ_k))\to \mathfrak{m}_\Delta(\Delta(\calJ))$ then $m(\calJ_k)\to m(\calJ)$.

By Lemma \ref{p4_convergence} we have that $\mathfrak{m}_\Delta\big(\Delta(\calJ)^{(n)}\big) \to \mathfrak{m}_\Delta^0$.
Note that  $\Delta(\calJ^{(np)})\ne \Delta(\calJ)^{(n)}$. However, $\Delta(\calJ^{(np)})$ and $\Delta(\calJ)^{(n)}$ differ only in the first block entry, which implies $\Delta(\calJ^{(np)})^{(1)} = \Delta(\calJ)^{(n+1)}$. Thus
$$\mathfrak{m}_\Delta\big(\Delta(\calJ^{(np)})^{(1)}\big)=\mathfrak{m}_\Delta\big(\Delta(\calJ)^{(n+1)}\big)\to \mathfrak{m}_\Delta^0.
$$
Note that $\Delta(\calJ^0)^{(1)}$ is free, so $\mathfrak{m}_\Delta\big(\Delta(\calJ^0)^{(1)}\big)= \mathfrak{m}_\Delta^0$.
Therefore
$$\mathfrak{m}_\Delta\big(\Delta(\calJ^{(np)})^{(1)}\big)\to \mathfrak{m}_\Delta\big(\Delta(\calJ^0)^{(1)}\big).$$
Now use \eqref{p4_m-recur}: the first Jacobi parameters of $\Delta(\calJ^{(np)})$ converge to the first Jacobi parameters of $\Delta(\calJ^0)$, which implies that $\mathfrak{m}_\Delta\big(\Delta(\calJ^{(np)})\big)\to \mathfrak{m}_\Delta\big(\Delta(\calJ^0)\big)$ if $\widetilde{\Delta}(z)$ is a regular point of $\mathfrak{m}_\Delta\big(\Delta(\calJ^0)\big)$. This gives us $m(\calJ^{(np)})\to m(\calJ^0)$ by continuity of $g$, for all $z$ such that $\widetilde{\Delta}(z)$ is regular for $\mathfrak{m}_\Delta\big(\Delta(\calJ^0)\big)$.

In fact, note that the convergence $\mathfrak{m}_\Delta\big(\Delta(\calJ^{(np)})^{(1)}\big)\to \mathfrak{m}_\Delta\big(\Delta(\calJ^0)^{(1)}\big)$ is given by Lemma \ref{p4_convergence} to be uniform on compacts ($\mathfrak{m}_\Delta^0$ has no poles except at $\infty_-$). Therefore
\begin{multline*}
m(\calJ^{(np)})
=g\left(\left[B_{0}^{(np)}-\Delta(z)-A_{0}^{(np)} \mathfrak{m}_\Delta(\Delta(\calJ^{(np)})^{(1)})A_{0}^{(np)}{}^* \right]^{-1}, \right. \\
 \left.
 \vphantom{g\left(\left[B_{0}^{(np)}-\Delta(z)-A_{0}^{(np)} \mathfrak{m}_\Delta(\Delta(\calJ^{(np)})^{(1)})A_{0}^{(np)}{}^* \right]^{-1}, \right.}
 \{a_j\}_{j=1+np}^{N+np},\{b_j\}_{j=1+np}^{N+np}\right)
\end{multline*}
is just some rational function of finitely many uniformly convergent analytic functions. This implies that $m(\calJ^{(np)})$ is a sequence of meromorphic functions that converges to  $m(\calJ^0)$ uniformly on compacts {with respect to the spherical distance}. In particular, if $m(\calJ^0)$ has a pole at $z$, then $m(\calJ^{(np)})\to\infty$.
\qed \end{proof}

\end{subsection}

%%%%%%%%%%%%%%%%%%%%%%%%%%%%%%%%%%%%%%%%%%%%%%%%%%%%%%%%%%%%%%%%%%%%%%%%%%%%%%%%%%%%%%%%%%%%%

\begin{subsection}{Proof of Theorems \ref{p4_th_merom} and \ref{p4_th_finite_merom}} \label{Proofs_Proof}
\hspace*{\fill}
\smallskip
%\mbox{}\linebreak

\textit{Proof of Theorem \ref{p4_th_merom}.}

(ii)$\Rightarrow$(i) Passing from $m$ to $m^{(-2)}$ in Lemma \ref{p4_may4}, we may assume that $m$ itself satisfies (P1) and (P2).

We want to apply Lemma \ref{p4_lm_M} to $\Delta(\calJ)$.

(II)(A)  holds by (ii)(a) and  Lemma \ref{p4_may1}, and analytic continuation. Indeed, for any $\lambda\in\calR_R$, $\widetilde{f}_j(\lambda)\in\calS_R$, so all we need to check is continuity along $\pi^{-1}((-\infty,-2)\cup(2,\infty))\cap\calR_-$.
%Let $\eta\in (2,\infty)\setminus\{\Delta(\gamma_j)\}$, where $\gamma_j$ are the zeros of $\Delta'$, and consider
%\begin{equation}\label{p4_ee2.29}
%\lim_{\calR_-\cap\pi^{-1}(\bbC_+)\ni \lambda\to\eta_-} \mathfrak{m}_\Delta(\lambda) - \lim_{\calR_-\cap\pi^{-1}(\bbC_-)\ni \lambda\to\eta_-} \mathfrak{m}_\Delta(\lambda).
%\end{equation}
We want to show that for any $\eta\in (-\infty,-2)\cup(2,\infty)$,
\begin{equation}\label{p4_ee2.29}
\lim_{\calR_-\cap\pi^{-1}(\bbC_+)\ni \lambda\to\eta_-} \mathfrak{m}_\Delta(\lambda) = \lim_{\calR_-\cap\pi^{-1}(\bbC_-)\ni \lambda\to\eta_-} \mathfrak{m}_\Delta(\lambda).
\end{equation}
Even though in general $\lim_{\calR_-\cap\pi^{-1}(\bbC_+) \lambda\to\eta_-} \widetilde{f}_j(\lambda)\ne\lim_{\calR_-\cap\pi^{-1}(\bbC_-)\ni \lambda\to\eta_-} \widetilde{f}_j(\lambda) $, we however still have
\begin{equation*}\label{p4_ee2.30}
\left\{ \lim_{\calR_-\cap\pi^{-1}(\bbC_+) \lambda\to\eta_-} \widetilde{f}_j(\lambda) \right\}_{1\le j\le p} = \left\{ \lim_{\calR_-\cap\pi^{-1}(\bbC_-)\ni \lambda\to\eta_-} \widetilde{f}_j(\lambda) \right\}_{1\le j\le p} %= \left\{ (f_j(\eta))_- \right\}_{1\le j\le p}
\end{equation*}
as sets (these points just get permuted). Then \eqref{p4_ee2.6} shows that \eqref{p4_ee2.29} is true.
%One has to be careful with the points $\Delta(\gamma_j)_-$, since \eqref{p4_ee2.29} does not make sense at those. However one can deform the contour $\pi^{-1}((-\infty,-2)\cup(2,\infty))\cap\calR_-$ to avoid these points, and then repeat the above arguments. Thus (II)(A) holds.

(ii)(b), Lemma~\ref{p4_may1}, and Lemma~\ref{p4_may5} imply (II)(B).

Recall the functions $\mathfrak{U}(\lambda)$ and $\mathfrak{L}(\lambda)$, which we introduced in \eqref{funcU} and \eqref{funcL}. Equation \eqref{p4_ems} can be rewritten as
\begin{equation}\label{mmsharp}
  \left[\mathfrak{m}_\Delta(\lambda)-\mathfrak{m}^\sharp_\Delta(\lambda)\right]^{-1} = \mathfrak{U}(\lambda) \mathfrak{L}(\lambda)^{-1},
\end{equation}
and so the poles of \eqref{mmsharp} may come only from the poles of $\mathfrak{L}(\lambda)^{-1}$.

Let us show that (II)(C) holds. Assume that it does not, and there is a pole of \eqref{mmsharp} at $\lambda_0\in \pi^{-1}(F_R\setminus\{\pm2\})$. By symmetry, we can assume $\lambda_0\in\calR_+$.
%Without loss of generality, let $\lambda_0\in\calR_+\cap\pi^{-1}(\bbC_+\cup\bbR)$.

Suppose first that $\{ \widetilde{f}_j(\lambda_0) \}_{j=1}^p$ are all regular points for $m$ and $m^\sharp$.  Then $\mathfrak{L}(\lambda)$ is regular at $\lambda_0$. So the fact that $\mathfrak{L}(\lambda)^{-1}$ has a pole means that $\det\mathfrak{L}(\lambda)$ is zero. By Lemma \ref{p4_may6} and (ii)(b), $\lambda_0=\Delta(\gamma_j)_+$ for some $j$.
%
%Evaluating the determinant on the right-hand side of \eqref{p4_ems}, we see that $f_j(\lambda_0)=f_k(\lambda_0)$ for some $j,k$

By Lemma \ref{p4_may5}, $\mathfrak{L}(\lambda)^{-1}$ has a simple pole at $\lambda_0$%, where $k=\dim\spann \{v_1,\cdots,v_p\}^\perp$ and $F(\lambda_0)$ is regular. Then its Smith-McMillan form at $\lambda_0$ has $\{\lambda-\lambda_0,\ldots,\lambda-\lambda_0,1,\ldots,1\}$ ($k$ of $\lambda-\lambda_0$) on the diagonal. Therefore $F(\lambda)^{-1}$ has first order pole
, so
$$
\res_{\lambda=\lambda_0} \left[\mathfrak{m}_\Delta(\lambda)-\mathfrak{m}^\sharp_\Delta(\lambda)\right]^{-1} = \mathfrak{U}(\lambda_0) \res_{\lambda=\lambda_0} \mathfrak{L}(\lambda)^{-1}.
$$
But using Lemmas \ref{p4_lm0}, \ref{p4_may6}, and \ref{p4_may5}, we get
$$
\ran \res_{\lambda=\lambda_0} \mathfrak{L}(\lambda)^{-1}=\ker \mathfrak{L}(\lambda_0)=\ker \mathfrak{U}(\lambda_0),
$$
which implies $\res_{\lambda=\lambda_0} (\mathfrak{m}_\Delta(\lambda)-\mathfrak{m}^\sharp_\Delta(\lambda))^{-1}=\bdnot$, i.e., $(\mathfrak{m}_\Delta(\lambda)-\mathfrak{m}^\sharp_\Delta(\lambda))^{-1}$ is regular at $\lambda_0$.

Now assume that $z_0=\widetilde{f}_{n}(\lambda_0)$ is a pole for $m$ or $m^\sharp$ for some $1\le n\le p$. Note that \eqref{kerL} does not apply here, so we need some additional arguments.

By (ii)(d), $z_0$ cannot be a pole for both $m$ and $m^\sharp$. Without loss of generality, let it be a pole for $m^\sharp$. By the property (P2), $m(\widetilde{f}_j(\lambda_0))$ and $m^\sharp(\widetilde{f}_j(\lambda_0))$ are regular for $j\ne n$. By the property (P1), $\pi(\lambda_0) \ne \Delta(\gamma_j)$ for every $j$. Therefore $\mathfrak{U}(\lambda_0)$ is invertible. Let $k\ge1$ be the order of the pole of $m^\sharp$ at $z_0$. By Lemma~\ref{p4_may2}, $\mathfrak{m}_\Delta(\lambda)-\mathfrak{m}^\sharp_\Delta(\lambda)$ has a pole of order $k$ at $\lambda_0$. Let its Smith--McMillan  form (see Lemma~\ref{p4_lmSM}) be
\begin{equation*}
\mathfrak{m}_\Delta(\lambda)-\mathfrak{m}^\sharp_\Delta(\lambda)=E(\lambda) \diag\left((\lambda-\lambda_0)^{\kappa_1},\ldots,(\lambda-\lambda_0)^{\kappa_p}\right) F(\lambda)
\end{equation*}
with $\kappa_1\ge \kappa_2 \ge \ldots \ge\kappa_p=-k$. By Lemma \ref{p4_may3} (and (ii)(c)), $\det(\mathfrak{m}_\Delta(\lambda)-\mathfrak{m}^\sharp_\Delta(\lambda))$ has also a pole of order $k$. Therefore $\kappa_1+\ldots+\kappa_{p-1}=0$. In order to get that $(\mathfrak{m}_\Delta-\mathfrak{m}^\sharp_\Delta)^{-1}$ is regular at $\lambda_0$, we need to show that $\kappa_j\le 0$ for all $j$.

Using \eqref{p4_ems}, we can see that
$$
\lim_{\lambda\to\lambda_0} (\lambda-\lambda_0)^k [\mathfrak{m}_\Delta(\lambda)-\mathfrak{m}^\sharp_\Delta(\lambda)]
$$
has rank $1$, since each matrix $[p_{j-1}(f_j(\lambda))p_{s-1}(f_j(\lambda))]_{j,s=1}^p$ is of rank $1$ and $\mathfrak{U}(\lambda_0)$ is invertible. Therefore $\kappa_{p-1}>-k$.

Assume $0>\kappa_{p-1}>-k$. Then by Lemma \ref{p4_null-pole}, there exists an analytic $\bbC^p$-valued function $\phi_{p-1}$ such that $\phi_{p-1}(\lambda_0)\ne 0$ and
$$
(\lambda-\lambda_0)^{-\kappa_{p-1}} \phi_{p-1}(\lambda)^T (\mathfrak{m}_\Delta(\lambda)-\mathfrak{m}^\sharp_\Delta(\lambda))=\psi_{p-1}(\lambda)
$$
is analytic at $\lambda_0$ with $\psi_{p-1}(\lambda_0)\ne0$. Now plug $\mathfrak{m}_\Delta-\mathfrak{m}^\sharp_\Delta$ from \eqref{p4_ems} into the last expression.  We claim that, in fact,
$$
\lim_{\lambda\to \lambda_0}(\lambda-\lambda_0)^{-\kappa_{p-1}} \phi_{p-1}(\lambda)^T (\mathfrak{m}_\Delta(\lambda)-\mathfrak{m}^\sharp_\Delta(\lambda))=0.
$$
The reason is that $m(\widetilde{f}_{N}(\lambda_0))-m^\sharp(\widetilde{f}_{N}(\lambda_0))$ has a pole of order $k> -\kappa_{p-1}$, which forces $\phi_{p-1}(\lambda_0)^T$ to be in the kernel of $[p_{j-1}({f}_{N}(\lambda_0))p_{s-1}({f}_{N}(\lambda_0))]_{j,s=1}^p$. But any other $m(\widetilde{f}_j(\lambda_0))-m^\sharp(\widetilde{f}_j(\lambda_0))$ ($j\ne n$) is regular, so each of those terms in the sum vanishes too. Therefore we conclude $\psi_{p-1}(\lambda_0)=0$, a contradiction.

We showed that $\kappa_{p-1}\ge 0$. Since $\kappa_1+\ldots+\kappa_{p-1}=0$ and $\kappa_1\ge \kappa_2 \ge \ldots \ge\kappa_{p-1}\ge0$, we obtain $\kappa_1= \kappa_2 =\ldots =\kappa_{p-1}=0$, which implies that $(\mathfrak{m}_\Delta(\lambda_0)-\mathfrak{m}^\sharp_\Delta(\lambda_0))^{-1}$ is regular.

Finally we need to show that there are at most simple poles of $(\mathfrak{m}_\Delta-\mathfrak{m}^\sharp_\Delta)^{-1}$ at $\lambda_0=\pi^{-1}(\pm2)$. By (ii)(c), $m(\widetilde{f}_j(\lambda_0))-m^\sharp(\widetilde{f}_j(\lambda_0))$ are zeros of order at most 1. Let $k$ be the number of such simple zeros, and let the corresponding indices be $j_1,\ldots,j_k$. There are no poles of $m$ or $m^\sharp$ at $\widetilde{f}_j(\lambda_0)$ by (P1), so $m-m^\sharp$ is analytic there.
%Let the Smith--McMillan form of $(m_\Delta(\lambda)-m^\sharp_\Delta(\lambda))$ at $\lambda_0$ be
%\begin{equation}
%m_\Delta(\lambda)-m^\sharp_\Delta(\lambda)=E(\lambda) \diag\left((\lambda-\lambda_0)^{\kappa_1},\ldots,(z-z_0)^{\kappa_l}\right) F(\lambda)
%\end{equation}
%with $\kappa_1\ge \kappa_2 \ge \ldots \ge\kappa_l=-1$.
%Note that (using \eqref{p4_ems} again) $(\lambda-\lambda_0)(m_\Delta(\lambda_0)-m^\sharp_\Delta(\lambda_0))$ has rank $s$. This means that the last $s$ of $\kappa_j$'s are equal to $-1$: $\kappa_{l-s+1}=\ldots=\kappa_l=-1$. Finally note that any vector in $\spann \{v_{j_1},\cdots,v_{j_k}, v_{n_1}, \ldots, v_{n_s}\}^\perp$ is a
%
Repeating the arguments of Lemma \ref{p4_may6}, one sees that
$$\ker\mathfrak{L}(\lambda_0)=\spann\left[\{\vec{v}_{1},\cdots,\vec{v}_{p}\}\setminus\{\vec{v}_{j_1},\cdots,\vec{v}_{j_k}\}\right]^\perp,$$
where $\vec{v}_j=(1,p_1(f_j(\lambda_0)),\cdots,p_{p-1}(f_j(\lambda_0)))^*$. Since $\vec{v}_j$ are linearly independent, we see that the dimension of this kernel is precisely $k$. Since $\det\mathfrak{L}$ has a zero of order $k$ at $\lambda_0$  by Lemma \ref{p4_may6}, we conclude that its inverse has a simple pole (Lemma \ref{p4_lm0}). This establishes that $\mathfrak{m}_\Delta$ satisfies (II)(C).

Finally, let us check (II)(D). Assume $\mathfrak{m}_\Delta$ has a pole at $(\lambda_0)_+$ and $(\lambda_0)_-$ for some $\lambda\in\bbC\setminus{[-2,2]}$. By (P1), $\lambda_0\ne \Delta(\gamma_j)$ for any $j$. This implies that $\det\mathfrak{U}(\lambda_0)$ is invertible, and so the pole of $\mathfrak{m}_\Delta((\lambda_0)_+)$  must have come from  a pole of $m(f_j((\lambda_0)_+))$ or $m^\sharp(f_j((\lambda_0)_+))$ for some $j$. Similarly, the pole of $\mathfrak{m}_\Delta((\lambda_0)_-)$ comes from  a pole of $m(f_k((\lambda_0)_-))$ or $m^\sharp(f_k((\lambda_0)_-))$ for some $k$. But this violates the condition (P2).

Thus (II)(A)--(D) hold, and we are in position to apply Lemma  \ref{p4_lm_M}. Therefore $\Delta(\calJ)$ satisfies (I), which implies (i) by Lemma \ref{p4_DKS}.

\bigskip

(i)$\Rightarrow$(ii) %Applying Lemma \ref{p4_may4}, we might as well assume that $m$ satisfies ($\dagger$) and ($\ddagger$).
The condition (i) implies that (I) holds for $\Delta(\calJ)$ by Lemma \ref{p4_DKS}, which in turn implies that (II)(A)--(D) hold by Lemma \ref{p4_lm_M}.

(ii)(a) holds by Lemma \ref{p4_may0} and analytic continuation. Indeed, for each $l$, $1\le l \le p$, it allows us to meromorphically extend $m$ to  the region $\widetilde{f}_l(F_R)\cap\calS_-$. Their union is of course $\pi^{-1}(E_R)\cap\calS_-$, so the only thing we need to check is that our continuation is continuous on the boundaries of these regions, i.e., on $\pi^{-1}(\Delta^{-1}((-\infty,2)\cup(2,\infty)))\cap\calS_-$. Choose any $z_0$ there, and let $\lambda_0=\widetilde{\Delta}(z_0)$. Let us assume that $z_0$ lies on the boundaries of $\widetilde{f}_1(F_R)$ and of $\widetilde{f}_2(F_R)$. Then either
\begin{equation}\label{bdry}
\lim_{\calR_-\cap\pi^{-1}(\bbC_+)\ni \lambda\to\lambda_0}\widetilde{f}_1(\lambda)=\lim_{\calR_-\cap\pi^{-1}(\bbC_-)\ni \lambda\to\lambda_0}\widetilde{f}_2(\lambda)
\end{equation}
or
$$
\lim_{\calR_-\cap\pi^{-1}(\bbC_-)\ni \lambda\to\lambda_0}\widetilde{f}_1(\lambda)=\lim_{\calR_-\cap\pi^{-1}(\bbC_+)\ni \lambda\to\lambda_0}\widetilde{f}_2(\lambda).
$$
Without loss, let us assume it's \eqref{bdry}. We need to show
\begin{equation}\label{contin}
\lim_{\calS_-\cap \widetilde{f}_1(F_R) \ni z \to z_0} m(z) = \lim_{\calS_-\cap \widetilde{f}_2(F_R) \ni z \to z_0} m(z).
\end{equation}
But \eqref{bdry} implies
%Using \eqref{p4_ee2.30}, we  obtain
\begin{equation*}
\left\{ \lim_{\calR_-\cap\pi^{-1}(\bbC_+) \ni \lambda\to\lambda_0} \widetilde{f}_j(\lambda) \right\}_{1\le j\le p,j\ne1} = \left\{ \lim_{\calR_-\cap\pi^{-1}(\bbC_-)\ni \lambda\to\eta_-} \widetilde{f}_j(\lambda) \right\}_{1\le j\le p,j\ne2} %= \left\{ \widetilde{f}_j(\lambda_0) \right\}_{1\le j\le p}\setminus\{z_0\}.
\end{equation*}
Then \eqref{p4_func} and the fact that $\calJ-x_j$ commute for different $j$'s prove \eqref{contin}. Thus we established (ii)(a).

\eqref{p4_func} and (II)(B) imply (ii)(b).

Now let us show (ii)(c) and (ii)(d).

First of all, let $m^0$ be the Borel transform of the spectral measure of the periodic Jacobi matrix $\big(a_n^{(0)},b_n^{(0)}\big)_{n=1}^\infty$ from (i). Note that $m^0$ is of the form \eqref{m_periodic}, and it is straightforward to check that it satisfies (ii)(c) and (ii)(d) on all $\calS$ ((d) follows from the fact that $p_{p-1}(z)$ has simple zeros).

Note also that if $m(z)=m^\sharp(z)$ (this includes the possibility of $\infty=\infty$), then we would have $m^{(n)}(z)=m^{(n)}{}^\sharp(z)$ for every $n$ by \eqref{m_recur}. But by Lemma \ref{p4_convergence2} this would produce
$$
m^0(z)=\lim_{n\to\infty}m^{(n)}(z)=\lim_{n\to\infty}m^{(n)}{}^\sharp(z)=m^0{}^\sharp(z),
$$
which, as we just checked, is possible only if $z\in \pi^{-1}(\cup_{j=1}^p \{\alpha_j,\beta_j\})$.

Thus $m$ satisfies (ii)(c) and (ii)(d) with a possible exceptions of the band edges. So let us assume that $m(z)-m^\sharp(z)$ has a pole of order $k\ge2$ at some band edge $z_0\in \pi^{-1}(\cup_{j=1}^{p}\{\alpha_j,\beta_j\})$. Without loss of generality, we may assume $\widetilde{\Delta}(z_0)=2$ and $\widetilde{f}_1(2)=z_0$.

For $\lambda$ in a small neighborhood of $2$, let us define the $\bbC^p$-valued function $\phi(\lambda)$ to be the unique vector of norm $1$ in $\spann\{\vec{v}_2(\lambda),\vec{v}_3(\lambda),\ldots,\vec{v}_p(\lambda)\}^\perp$, where, just as in Lemma~\ref{p4_may6}, $\vec{v}_j(\lambda)=(1,p_1(f_j(\lambda)),\ldots,p_{p-1}(f_j(\lambda)))^*$. Indeed, for $\lambda$ close to $2$, this is a $1$-dimensional space. Moreover, $\phi(\lambda)$ is analytic at $\lambda=2$ (as a function on $\calS$), and $\phi(2)\ne 0$.

Now consider the function
$\phi(\lambda)^T \mathfrak{L}(\lambda)$ (see \eqref{funcL}). By construction, each term in the sum except $j=1$ is identically zero for any $\lambda$. The $j=1$ term has zero at $\lambda=2$ of order at least $k\ge2$ because of the factor $m(\widetilde{f}_1(\lambda))-m^\sharp(\widetilde{f}_1(\lambda))$. This means that $\phi(\lambda)$ is a left null function (see Definition \ref{null_pole}) at $\lambda=2$ for $\mathfrak{L}(\lambda)$ of order at least $2$. But that means that one of the $\kappa$'s in the Smith--McMillan form of $\mathfrak{L}(\lambda)$ is $\ge 2$. This implies that $\mathfrak{L}(\lambda)^{-1}$ has pole at $\lambda=2$ of order at least $2$. Then Lemma~\ref{p4_may2} implies that $(\mathfrak{m}_\Delta-\mathfrak{m}^\sharp_\Delta)^{-1}$ has a pole at $\lambda=2$ of order at least $2$, which contradicts (II)(C).
\qed

\medskip

\textit{Proof of Theorem \ref{p4_th_finite_merom}.}

(i)$\Rightarrow$(ii) If $\calJ=(a_n,b_n)_{n=1}^\infty$ is eventually periodic, then $\Delta(\calJ)$ is eventually free by the Magic Formula (Lemma~\ref{magic}). Then Lemma~\ref{p4_lm_M_finite} implies that $\mathfrak{m}_\Delta$ has a meromorphic continuation to the whole surface $\calR$. Lemma \ref{p4_may0} allows us to extend $m$ to the whole $\calS$ as well. Parts (ii)(b), (ii)(c), and (ii)(d) are already proven in the previous theorem.

\medskip

(ii)$\Rightarrow$(i) The result is obtained by following the proof of the previous theorem, but applying  Lemma \ref{p4_lm_M_finite}  instead of Lemma \ref{p4_lm_M} (note that $\mathfrak{m}_\Delta$ has meromorphic continuation to the whole surface $\calR$ by (ii)(a) and Lemma \ref{p4_may1}).
\qed
\end{subsection}
\end{section}

%%%%%%%%%%%%%%%%%%%%%%%%%%%%%%%%%%%%%%%%%%%%%%%%%%%%%%%%%%%%%%%%%%%%%%%%%%%%%%%%%%%%%%%%%%%%%%%%%
%%%%%%%%%%%%%%%%%%%%%%%%%%%%%%%%%%%%%%%%%%%%%%%%%%%%%%%%%%%%%%%%%%%%%%%%%%%%%%%%%%%%%%%%%%%%%%%%%
\begin{appendix}\label{Appendix}

%\begin{section}{}
\begin{section}{Orthogonal Polynomials on the Real Line}\label{Appendix_Orthogonal}
We will introduce some basics of orthogonal polynomials on the real line here. %Since we will also need the matrix-valued analogue of the theory, to avoid the repetition, we will do it simultaneously. Therefore we will immediately
We immediately start with the matrix-valued theory to avoid repetition. The scalar theory is of course a special case $p=1$. We will mention the differences between the scalar and matrix-valued cases as we proceed.

The proofs of most of the results listed here, along with more details, can be found in the paper by Damanik--Pushnitski--Simon \cite{DPS} (see also \cite{Rice}).

Let $\mu$ be a $p\times p$ matrix-valued Hermitian positive semi-definite finite measure on $\bbR$ of compact support, normalized
by $\mu(\bbR) = \bdone$, where $\bdone$ is the $p\times p$ identity matrix. For any $p\times p$ dimensional matrix functions $f,g$, define
\begin{equation*}
\lla f,g\rra_{L^2(\mu)}=\int f(x)^* d\mu(x) g(x);\label{p2_eq1.1}\\
%\lla f\rra^2_{L^2(\mu)}&=\lla f,f\rra_{L^2(\mu)},
\end{equation*}
where ${}^*$ is the Hermitian conjugation (just complex conjugation if $p=1$).
%Here we can regard $\lla f\rra_{L^2(\mu)}$ as the square root of the non-negative definite matrix $\lla f,f\rra_{L^2(\mu)}$. %By $\lla f,g\rra_{L^2}$, with the index just $L^2$, we will mean the product with respect to the Lebesgue measure on the real line or the unit circle, depending on the context.

What we have defined here is the right product of $f$ and $g$, as opposed to the left product $\int f(x)d\mu(x) g(x)^*$, whose properties are completely analogous.

Measure $\mu$ is called non-trivial if $|| \lla f,f\rra_{L^2(\mu)}||>0$ for all non-zero matrix-valued polynomials $f$. From now on assume $\mu$ is non-trivial. Then there exist unique (right) monic polynomials $\mathbf{P}^R_n$ of degree $n$ satisfying
\begin{equation*}
\lla \mathbf{P}^R_n,f \rra_{L^2(\mu)}=0 \quad \mbox{ for any polynomial } f \mbox{ with } \deg f<n.
\end{equation*}

For any choice of unitary $l\times l$ matrices $\tau_n$ (we demand $\tau_0=\bdone$), the polynomials
\begin{equation}\label{in2.3}
\mathfrak{p}^R_n=\mathbf{P}^R_n \lla \mathbf{P}^R_n,\mathbf{P}^R_n\rra_{L^2(\mu)}^{-1/2}\tau_n
\end{equation}
are orthonormal:
\begin{equation*}
\lla \mathfrak{p}^R_n,\mathfrak{p}^R_m \rra_{L^2(\mu)}=\delta_{n,m} \bdone,
\end{equation*}
where $\delta_{n,m}$ is the Kronecker $\delta$.
Using orthogonality one can show that they satisfy the (Jacobi) recurrence relation
\begin{equation}\label{p3_recur}
x \mathfrak{p}^R_n(x)=\mathfrak{p}^R_{n+1}(x)A_{n+1}^* +\mathfrak{p}^R_n(x)B_{n+1}+\mathfrak{p}^R_{n-1}(x) A_n, \quad n=1,2,\ldots,
\end{equation}
where matrices $A_n=\lla \mathfrak{p}^R_{n-1},x\mathfrak{p}^R_n \rra_{L^2(\mu)}$, $B_n=\lla \mathfrak{p}^R_{n-1},x\mathfrak{p}^R_{n-1} \rra_{L^2(\mu)}$ are called the Jacobi parameters (with $\mathfrak{p}^R_{-1}=\bdnot$, $A_0=\bdone$, the relation holds for $n=0$ too).

From the above recursion, it is easily seen that the leading coefficient of $\mathfrak{p}^R_n(x)$ is
\begin{equation}\label{leading}
(A_1^*)^{-1}\ldots (A_n^*)^{-1}.
\end{equation}

In the exact same fashion, just using the left product instead of right, one can define the left monic orthogonal polynomials $\mathbf{P}^L_n$ and left orthonormal polynomials $\mathfrak{p}^L_n$. It is not hard to see that $\mathbf{P}^L_n(z)=\mathbf{P}^R_n(\bar{z})^*$ and $\mathfrak{p}^L_n(z)=\mathfrak{p}^R_n(\bar{z})^*$.

We will be using the notation $\mathbf{P}_n$, $\mathfrak{p}_n$ for  matrix-valued polynomials, while in the case $p=1$ we will downgrade them to ${P}_n$, ${p}_n$. Whenever we write $\mathfrak{p}_n$ without the sup-index ${}^R$ or ${}^L$, we will mean the right orthonormal polynomial $\mathfrak{p}^R_n$.

Note that if $p=1$ it is natural to choose $\tau_n=1$ in \eqref{in2.3}. In particular this gives $p_n^R=p_n^L$, the Jacobi parameters become real, and $A_n$'s positive. This choice of $\tau_n$'s is not necessarily the best if $p>1$. Thus one has to talk about the equivalence classes of Jacobi matrices (see \cite{DPS,K_equiv}).

We can arrange sequences  $\{A_n\}_{n=1}^\infty$, $\{B_n\}_{n=1}^\infty$ (called Jacobi parameters) into an infinite matrix
\begin{equation}\label{in2.5}
\mathcal{J}=\left(
\begin{array}{cccc}
B_1&A_1&\mathbf{0}& \\
A_1^* &B_2&A_2&\ddots\\
\mathbf{0}&A_2^* &B_3&\ddots\\
 &\ddots&\ddots&\ddots\end{array}\right).
\end{equation}
This is called a block Jacobi matrix if $p>1$. If $p=1$ then we lose the word ``block'' and denote the Jacobi coefficients by $a_n$, $b_n$ instead of $A_n$, $B_n$.

If $A_n\equiv \bdone$, $B_n\equiv \bdnot$ the corresponding (block) Jacobi matrix is called \textit{free}.
%We will also use
%\begin{equation*}
%\calJ^{(k)}=\left(
%\begin{array}{cccc}
%B_{k+1}&A_{k+1}&\mathbf{0}&\cdots\\
%A_{k+1}^{*}&B_{k+2}&A_{k+2}&\cdots\\
%\mathbf{0}&A_{k+2}^*&B_{k+3}&\cdots\\
%\vdots&\vdots&\vdots&\ddots\end{array}\right), \quad
%\widetilde{\calJ}_k=\left(
%\begin{array}{cccccccc}
%B_1&A_1&\mathbf{0}\\
%A_1^{*}&B_2&A_2\\
%\mathbf{0}&A_2^*&\ddots&\ddots\\
%&&\ddots&\ddots&\ddots\\
%&&&A_{k-1}^* &B_k&A_k&\mathbf{0}\\
%&&&\mathbf{0}&A_k^*&\mathbf{0}&\mathbf{1}\\
%&&&\mathbf{0}&\mathbf{0}&\mathbf{1}&\mathbf{0}&\ddots\\
%&&&&&&\ddots&\ddots
%\end{array}\right)
%\end{equation*}

Conversely, any block Jacobi matrix \eqref{in2.5} with invertible $\{A_n\}_{n=1}^\infty$ gives rise to a $p\times p$ matrix-valued Hermitian measure $\mu$ via the spectral theorem. If $p=1$ this establishes a one-to-one correspondence between all non-trivial compactly supported measures and  bounded Jacobi matrices. If $p>1$ the same holds, except now the correspondence is with the set of equivalence classes of bounded block Jacobi matrices. This has the name of Favard's Theorem (see \cite{DPS} for a proof in the matrix-valued case).

%Since we will be considering perturbations of the free case in Sections 1.3.2--1.3.4, the following two classical results will prove to be useful.
%
%\begin{theorem}[Weyl's Theorem] If $A_n\to\bdone$, $B_n\to\bdnot$, then $\esssup \mu=[-2,2]$.
%\end{theorem}
%
%\begin{theorem}[Denisov--Rakhmanov Theorem]\label{denisov} Assume $\mu$ is a non-trivial $l\times l$ matrix-valued measure on $\bbR$ with associated block Jacobi matrix $\calJ$ of type $3$ such that $\esssup \mu=[-2,2]$ and $\det\left(\frac{d\mu(x)}{dx}\right)>0$ a.e. on $[-2,2]$. Then $A_n\to\bdone$, $B_n\to\bdnot$.
%\end{theorem}
%
%The first result is trivial, while the second, in the form given here, is proven in \cite{DKS} (see also \cite{yakhlef-marc}, as well as \cite{denisov, rakhmanov}).

Define the Borel transform (also called the Weyl-Titchmarsh ${m}$-function) of the measure $\mu$:
\begin{equation}\label{p3_m-func}
\mathfrak{m}(z)=\int \frac{d\mu(x)}{x-z},
\end{equation}
which is a matrix-valued meromorphic  function in $\bbC\setminus \esssup\mu$.
Again, we will use the letter $m$ instead of $\mathfrak{m}$ if $p=1$.

Define $\mathcal{J}^{(1)}$ to be the ``once-stripped'' Jacobi matrix with Jacobi parameters $(A_n,B_n)_{n=2}^\infty$, i.e., the Jacobi matrix of the form  \eqref{in2.5} with the first row and column removed. Then the following holds (the matrix-valued version is due to \cite{AN}):
\begin{equation}\label{m_recur}
A_1 \mathfrak{m}(z;\calJ^{(1)}) A_1^*= B_1-z-\mathfrak{m}(z;\calJ)^{-1}.
\end{equation}

As was explained in the Introduction, \cite{K_jost} established a connection between the rate of exponential convergence of Jacobi coefficients and meromorphic continuations of $\mathfrak{m}$.

Denote by $\calR=\calS_{[-2,2]}$ the Riemann surface corresponding to $[-2,2]$ (i.e., the hyperelliptic surface corresponding to the polynomial $z^2-4$). Recall Definitions~\ref{s} and~\ref{sharp}.

Let $x(z)=z+z^{-1}$, and for any $R>1$ let $\calR_R=\calR_+ \cup \pi^{-1}(F_R)$, where $F_R$ is the interior of the bounded component of $x(R\,\partial\bbD)$ (ellipse).

%Now assume that $\fre=[-2,2]$. Instead of looking at the meromorphic continuations of $m$ through $(-2,2)$, it will be convenient to move $\bbC\setminus [-2,2]$ to $\bbD$ via the inverse of $z \mapsto z+z^{-1}$, and study the meromorphic continuations of $$M(z)=-m(z+z^{-1})$$ from $\bbD$ through $\partial\bbD$. Note that $M$ is also Herglotz in the meaning that $\imag M(z)\gtrless \bdnot$ if $z\in\bbC_{\pm}\cap \bbD$.
%
%Let $M^\sharp(z)=M(\overline{z}^{-1})^\dagger$ (which corresponds to $m^\sharp(z_\pm)=m(\overline{z}_\mp)^\dagger$).
%The following result hold.

The next three lemmas are taken from the author's~\cite{K_jost}.

\begin{lemma}\label{p4_lm_M}
Let $\esssup \mu = [-2,2]$ and $R>1$. Define $\mathfrak{m}$ as in \eqref{p3_m-func}. The following are equivalent:
\begin{list}{}
{
\setlength\labelwidth{2em}%
\setlength\labelsep{0.5em}%
\setlength\leftmargin{2.5em}%
}
\item[(I)] The Jacobi matrix $(A_n,B_n)_{n=1}^\infty$ associated with $\mu$ satisfies
  \begin{equation*}
  \limsup_{n\to\infty}\left(||B_n||+||\mathbf{1}-A_nA_n^*||\right)^{1/2n}\le R^{-1}.
  \end{equation*}
\item[(II)] All of the following holds:
\begin{itemize}
\item[(A)] $\mathfrak{m}$ has a meromorphic continuation to $\calR_R$;
\item[(B)] $\mathfrak{m}$ has no poles in $\pi^{-1}(-2,2)$, and at most simple poles at $\pi^{-1}(\pm2)$;
\item[(C)] $(\mathfrak{m}-\mathfrak{m}^\sharp)^{-1}$ has no poles in $\pi^{-1}(F_R)$, except at $\pi^{-1}(\pm2)$, where they are at most simple;
%\item For each $R^{-1}<|z_j|<1$, if $M(z)$ has at most order $1$ pole at $z_j^{-1}$ then
%\begin{gather}
%\label{p4_eq3.55n}
%\ran \widetilde{w}_j \subset \ran (\widetilde{w}_j+z_j^2 \widetilde{q}_j),\\
%\label{p4_eq3.56n}
%\ran \widetilde{w}_j \cap \ran \widetilde{q}_j=\varnothing,
%\end{gather}
%where $\widetilde{q}_j=\lim_{z\to z^{-1}_j} (z-z^{-1}_j)M(z)$.
\item[(D)] If $\mathfrak{m}$ has a pole at $\lambda_0\in \pi^{-1}(F_R) \cup \calR_+$ and at $\lambda_0^\sharp$, then%if $M(z)$ has a pole at $z_j^{-1}$ of order higher than $1$, then $(M(z)-M^\sharp(z))^{-1} M(z)$  is analytic at $z_j^{-1}$ and
\begin{gather*}
\ran \res_{\lambda=\lambda_0}\mathfrak{m}(\lambda) \subset \ker (\mathfrak{m}(\lambda_0^\sharp)-\mathfrak{m}^\sharp(\lambda_0^\sharp))^{-1} ,\\
\ran \res_{\lambda=\lambda_0}\mathfrak{m}(\lambda) \subset \left(\ran (\mathfrak{m}(\lambda_0^\sharp)-\mathfrak{m}^\sharp(\lambda_0^\sharp))^{-1} \mathfrak{m}(\lambda_0^\sharp)\right)^\perp.
%=\ker M(z^{-1}_j) (M(z^{-1}_j)-M^\sharp(z^{-1}_j))^{-1} .
\end{gather*}
\end{itemize}
\end{list}
\end{lemma}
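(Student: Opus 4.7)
The plan is to uniformize the Riemann surface $\calR$ via $\lambda = z+z^{-1}$, which identifies $\calR_+$ with $\bbD$, $\calR_-$ with $\{|z|>1\}$, the band $[-2,2]$ with $\partial\bbD$, and the branch points $\lambda=\pm 2$ with $z=\pm 1$ (where the local coordinate on $\calR$ is $\sqrt{z\mp 1}$). Writing $M(z):=\mathfrak{m}(z+z^{-1})$ and $M^\sharp(z)=M(1/\bar z)^*$, the equivalence becomes a statement about meromorphic continuation of $M$ from $\bbD$ into $\{|z|<R\}$ with a specific pole/zero structure, versus $\|A_n A_n^*-\bdone\|+\|B_n\|$ decaying at rate $R^{-1}$.

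The central tool is the matrix Jost solution. For $|z|<1$ small, let $\mathbf{U}_n(z)$ be the unique $p\times p$ matrix solution of the Jacobi recurrence for $\calJ$ with $\mathbf{U}_n(z)\,z^{-n}\to\bdone$ as $n\to\infty$. Under (I), a discrete Volterra fixed-point argument, using that $\|A_nA_n^*-\bdone\|+\|B_n\|$ is summable at rate $R^{-1}$, extends $\mathbf{U}_n(z)$ analytically to $\{|z|<R\}$, and a standard computation expresses $M(z)$ as a rational function of $\mathbf{U}_0(z)$ and $\mathbf{U}_1(z)$, producing a meromorphic extension of $M$ with poles at zeros of $\det\mathbf{U}_0$. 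The reflected Jost solution $\mathbf{U}_n(1/z)$ is analytic on $\{|z|>1\}$ and supplies the continuation to $\calR_-$; the difference $M-M^\sharp$ admits a Jost--Wronskian identity of schematic form
\begin{equation*}
M(z)-M^\sharp(z) \;=\; (z-z^{-1})\,\mathbf{U}_0(z)^{-*}\,W\,\mathbf{U}_0(1/z)^{-1},
\end{equation*}
with $W$ a constant invertible matrix; hence $(M-M^\sharp)^{-1}$ has poles precisely at common zeros of $\det\mathbf{U}_0(z)$ and $\det\mathbf{U}_0(1/z)$.

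For (I)$\Rightarrow$(II), condition (A) is immediate from the Jost extension. Unitarity of the transfer matrix on $\partial\bbD$ forces $\det\mathbf{U}_0$ to have no zeros on $\partial\bbD\setminus\{\pm 1\}$, giving (B) and (C); the $\sqrt{\cdot}$ local coordinate at $\pm 1$ restricts orders there to be simple; condition (D) captures the compatibility that the Wronskian factorization imposes on residues at paired bound states $z_0$ and $1/\bar z_0$. For (II)$\Rightarrow$(I) one runs the construction in reverse: from the prescribed data on $\calR_R$, factor $(M-M^\sharp)^{-1}$ via the Wronskian identity to recover a candidate $\mathbf{U}_0$ analytic on $\{|z|<R\}$, define $(A_n,B_n)$ by reading off coefficients in the three-term recursion, and verify $\mathbf{U}_n(z)z^{-n}\to\bdone$ via a Szeg\H{o}-type boundary analysis using $M$ on $\partial\bbD$; analyticity of $\mathbf{U}_0$ on $\{|z|<R\}$ then yields exponential decay of the Jacobi parameters at rate $R^{-1}$.

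The main technical obstacle is the matrix subtlety in condition (D). In the scalar case a pole of $M$ at $z_0\in\bbD$ outright excludes a pole at $1/\bar z_0$, but in the matrix case both points may host poles simultaneously, provided the residues satisfy the stated range/kernel compatibility, reflecting partial bound states. Disentangling the exact Smith--McMillan structure of $M-M^\sharp$ at such paired points, and showing that (D) is the sharp condition that both characterizes the Jost-analytic class and is preserved by the recursion, is where the substantive work lies; the remaining steps are routine matrix extensions of the classical scalar Jost-function analysis.
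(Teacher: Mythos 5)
This lemma is not proved in the present paper; it is quoted verbatim from the author's earlier work \cite{K_jost} (``The next three lemmas are taken from the author's~\cite{K_jost}.''), so there is no in-paper proof to compare against. Your outline is a plausible reconstruction of the Jost-function route that \cite{K_jost} indeed takes: uniformize $\calR$ by $\lambda=z+z^{-1}$, build a matrix Jost solution $\mathbf{U}_n(z)$ under hypothesis~(I) by a discrete Volterra/fixed-point estimate, express $M(z)=\mathfrak{m}(z+z^{-1})$ through $\mathbf{U}_0,\mathbf{U}_1$, and control $M-M^\sharp$ via a Wronskian identity. The structural picture is right.

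There is, however, a clear internal inconsistency that you should fix. From the schematic identity $M(z)-M^\sharp(z)=(z-z^{-1})\,\mathbf{U}_0(z)^{-*}W\,\mathbf{U}_0(1/z)^{-1}$ one gets $(M-M^\sharp)^{-1}=(z-z^{-1})^{-1}\,\mathbf{U}_0(1/z)\,W^{-1}\,\mathbf{U}_0(z)^*$, which is analytic on $\{R^{-1}<|z|<R\}\setminus\{\pm1\}$ with at most simple poles at $\pm1$ — exactly condition~(C). Your next sentence, that ``$(M-M^\sharp)^{-1}$ has poles precisely at common zeros of $\det\mathbf{U}_0(z)$ and $\det\mathbf{U}_0(1/z)$,'' contradicts both this formula and condition~(C) of the statement itself: common zeros of the two determinants produce high-order \emph{zeros}, not poles, of $(M-M^\sharp)^{-1}$. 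What such paired zeros actually do is threaten the Wronskian factorization's invertibility pointwise, and condition~(D) is the Smith--McMillan-level compatibility on the residues that prevents this from degenerating; that is the role you should ascribe to~(D), not a pole-location statement.

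A second, smaller caveat: the direction (II)$\Rightarrow$(I) is the substantive half, and ``run the construction in reverse'' undersells it. One must build a candidate Jost function from the given meromorphic/pole--zero data (a Wiener--Hopf or Riemann--Hilbert-type step), show the resulting coefficient sequences satisfy the Jacobi recursion with the correct normalization at infinity, and \emph{then} deduce the exponential rate; the ``Szeg\H{o}-type boundary analysis'' you gesture at is where essentially all the work lies. Since this is the direction actually invoked in the present paper's proof of Theorem~\ref{p4_th_merom} (via Lemma~\ref{p4_DKS}), it deserves more than one sentence. Finally, note that in the matrix setting the Wronskian $W$ and its independence of $n$ requires using the correct bilinear pairing $f_n^*A_n g_{n+1}-f_{n+1}^*A_n^*g_n$ (or its analogue) together with $\mathfrak{p}^L_n(z)=\mathfrak{p}^R_n(\bar z)^*$; this is straightforward but not automatic and should be recorded explicitly.
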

%\begin{remark}
%Let $z_j$ be a pole of $M$ in $\{z: R^{-1}<|z|<1\}$. If $M$ has no pole at $z_j^{-1}$, then the condition (c) is vacuous.
%\end{remark}

\begin{lemma}\label{p4_lm_M_finite}
Let $\esssup \mu = [-2,2]$. Define $\mathfrak{m}$ as in \eqref{p3_m-func}. The following are equivalent:
\begin{list}{}
{
\setlength\labelwidth{2em}%
\setlength\labelsep{0.5em}%
\setlength\leftmargin{2.5em}%
}
\item[(I)] The Jacobi matrix $(A_n,B_n)_{n=1}^\infty$ associated with $\mu$ satisfies
\begin{equation*}||B_n||+||\mathbf{1}-A_nA_n^*||=\bdnot \quad \mbox{for all large }n.\end{equation*}
\item[(II)] All of the following holds:
\begin{itemize}
\item[(A)] $\mathfrak{m}$ is a rational matrix function;
\item[(B)] $\mathfrak{m}$ has no poles in $\pi^{-1}(-2,2)$, and at most simple poles at $\pi^{-1}(\pm2)$;
\item[(C)] $(\mathfrak{m}-\mathfrak{m}^\sharp)^{-1}$ has no poles in $\calR_R$, except at $\pi^{-1}(\pm2)$, where they are at most simple;
\item[(D)] If $\mathfrak{m}$ has a pole at $\lambda_0\in\calR_+$, and at $\lambda_0^\sharp$, then
\begin{gather*}
\ran \res_{\lambda=\lambda_0}\mathfrak{m}(\lambda) \subset \ker (\mathfrak{m}(\lambda_0^\sharp)-\mathfrak{m}^\sharp(\lambda_0^\sharp))^{-1} ,\\
\ran \res_{\lambda=\lambda_0}\mathfrak{m}(\lambda) \subset \left(\ran (\mathfrak{m}(\lambda_0^\sharp)-\mathfrak{m}^\sharp(\lambda_0^\sharp))^{-1} \mathfrak{m}(\lambda_0^\sharp)\right)^\perp.
\end{gather*}
\end{itemize}
\end{list}
\end{lemma}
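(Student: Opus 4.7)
The plan is to bootstrap Lemma~\ref{p4_lm_M} by letting $R\to\infty$ in both directions of the equivalence, and then to handle the strict gap between ``super\-exponential decay'' and ``eventually free'' via a Jost/Weyl solution argument.

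For (I)$\Rightarrow$(II), suppose $(A_n,B_n)=(\mathbf{1},\mathbf{0})$ for $n\ge N$. Applying Lemma~\ref{p4_lm_M} for every $R>1$ immediately gives a meromorphic continuation of $\mathfrak{m}$ to $\calR_R$ with properties (B), (C), (D), and these unite into a meromorphic continuation to all of $\calR$ satisfying (B), (C), (D). To upgrade this to \emph{rationality} on $\calR$, I would work with the Weyl solution $\Psi_n(z)$ on $\calR$ (the $\ell^2$ solution at $+\infty$, from which $\mathfrak{m}(x(z))$ is reconstructed as $-\Psi_1(z)\Psi_0(z)^{-1}$ up to normalization). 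For $n\ge N$ the recursion is free, so $\Psi_n(z)=z^{n-N}\Psi_N(z)$ with $\Psi_N$ a constant matrix. Propagating the recursion backwards through the finitely many non\-free coefficients $(A_n,B_n)_{n=1}^{N-1}$ turns $\Psi_0(z)$ into a Laurent polynomial in $z$, making $\mathfrak{m}$ rational on $\calR$.

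For (II)$\Rightarrow$(I), suppose $\mathfrak{m}$ is rational on $\calR$ and satisfies (B), (C), (D). Since rationality on $\calR$ automatically supplies the meromorphic continuation to $\calR_R$ for \emph{every} $R>1$, and since the pole/zero hypotheses in (B)--(D) are inherited on each subregion, Lemma~\ref{p4_lm_M} yields
\begin{equation*}
\limsup_{n\to\infty}\bigl(\|B_n\|+\|\mathbf{1}-A_nA_n^*\|\bigr)^{1/2n}\le R^{-1}
\end{equation*}
for every $R>1$. Hence the limsup is zero, i.e.\ the Jacobi coefficients decay faster than any geometric rate. To promote this super\-exponential decay to identical vanishing beyond some $N$, I would again examine the Weyl solution $\Psi_n(z)$: by construction it is rational on $\calR$ with poles at the (finitely many) poles of $\mathfrak{m}$, independent of $n$. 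Writing $\Psi_n(z)=z^n\widetilde{\Psi}_n(z)$, super\-exponential decay of $(A_n,B_n)$ forces the sequence $\widetilde{\Psi}_n$ to converge uniformly on compact subsets of $\calR$; rationality with a fixed finite pole set then forces $\widetilde{\Psi}_n$ to stabilize to a constant matrix in $z$ from some $N$ onwards. Feeding a $z$-constant $\widetilde{\Psi}_n$ into the recursion $A_n^*\Psi_{n+1}+B_n\Psi_n+A_{n-1}\Psi_{n-1}=x(z)\Psi_n$ and matching coefficients of $z^k$ forces $(A_n,B_n)=(\mathbf{1},\mathbf{0})$ for $n\ge N$.

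The main obstacle is precisely this last step: Lemma~\ref{p4_lm_M} provides only super\-exponential decay, not termination, and in the matrix\-valued setting one must also cope with the normalization freedom in the orthonormal polynomials (the unitaries $\tau_n$ in \eqref{in2.3}), so ``eventually free'' should be understood up to the equivalence class of block Jacobi matrices. The cleanest way to close the gap is to identify the rational Weyl solution with a matrix Jost function and argue that a rational Jost function with only finitely many poles, together with vanishing decay rate, must be a Laurent polynomial; once that is established, the recursion immediately yields the eventually\-free conclusion. Everything else (rationality $\Leftrightarrow$ meromorphy on all $\calR_R$ uniformly, and the translation of (B)--(D) across $R\to\infty$) is essentially bookkeeping on top of Lemma~\ref{p4_lm_M}.
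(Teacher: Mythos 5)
Your overall plan — bootstrap from Lemma~\ref{p4_lm_M} by letting $R\to\infty$, then patch up the remaining gap — is a reasonable instinct, but the ``patch'' is exactly where the lemma lives, and your argument for it doesn't hold up. Two concrete issues.

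First, taking the union over $R$ of the domains $\calR_R$ yields $\calR\setminus\{\infty_-\}$, not $\calR$. Meromorphy there is strictly weaker than rationality on $\calR$; you cannot just ``unite'' the continuations. You do flag this and propose the backward-propagated Jost solution, which for (I)$\Rightarrow$(II) is fine in spirit: if $(A_n,B_n)=(\bdone,\bdnot)$ for $n\ge N$, back-propagation gives a matrix Laurent polynomial, hence rationality. That direction is salvageable.

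Second, and fatally, for (II)$\Rightarrow$(I) you obtain only $\limsup_n(\|B_n\|+\|\bdone-A_nA_n^*\|)^{1/2n}=0$, i.e.\ super\-exponential decay, and you then assert that rationality ``with a fixed finite pole set'' forces the normalized Weyl solutions $\widetilde\Psi_n$ to stabilize to a constant in $z$. That inference is false as stated: a sequence of rational functions with no poles at all can converge uniformly on compacts without ever becoming constant (e.g.\ $1+z/n$). Super\-exponential decay does not by itself distinguish between ``eventually free'' and ``decaying faster than any geometric rate but never terminating'', and Lemma~\ref{p4_lm_M} (which only sees the rate $R^{-1}$) cannot detect the difference. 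What you actually need is to exploit the fact that a rational $\mathfrak{m}$ has a pole at $\infty_-$ of some \emph{finite} order, which translates into a polynomial bound on the growth of the reflected Jost/Weyl data there; combined with the recursion this forces termination at a finite $N$. That step is the mathematical content of the lemma, and your proposal acknowledges it as ``the main obstacle'' without supplying an argument that closes it. Until you give a real proof that rationality plus the decay estimate implies termination, the proposal is incomplete.

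As a side remark, the cited source for this lemma is the author's earlier paper \cite{K_jost} (Thm~3.9), so the proof is not self-contained in the present paper; your instinct that it should follow from the finite-$R$ lemma by a limiting argument plus an extra Jost-function input matches the general shape of how one would expect it to go, but the extra input is not the bookkeeping you describe.
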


\begin{remarks}
1. We stated these lemmas in terms of $\mathfrak{m}$, rather than of $M$ (see \eqref{capM}) as it was in \cite{K_jost}. Note also that $z_0^{-1}$ in \cite[Thm~3.8(D)/3.9(D)]{K_jost} should be better thought of as $\bar{z}_0^{-1}$. Since the only poles of $\mathfrak{m}$ on $\calR_+$ are the pure points of the spectral measure, they must be real, and so $z_0^{-1}=\bar{z}_0^{-1}$.

%that the only poles of $\mathfrak{m}$ on $\calR_+$ are pure points of the spectral measure, and so must be real. So the only $\lambda$'s that we have to worry about in (D) are in $\pi^{-1}(\bbR)$. This explains why $z_0^{-1}$ from \cite{K_jost} is the same as $\bar{z}_0^{-1}$.

2. Conditions (D) of Lemmas~\ref{p4_lm_M}/\ref{p4_lm_M_finite} do not look pleasant. Note however that they  are trivially satisfied if no $\lambda$ and $\lambda^\sharp$ are both poles of $\mathfrak{m}$. This will be enough for our purposes. One can also show that for $p=1$, (D) is equivalent to $\mathfrak{m}$ not having simultaneous poles at $\lambda$ and $\lambda^\sharp$ (see \cite{K_jost}).
\end{remarks}

\begin{lemma}\label{p4_convergence}
Under the conditions of one of the previous two lemmas,
\begin{equation*}
\mathfrak{m}^{(n)}(\lambda) \to \mathfrak{m}^0(\lambda) \quad \mbox{ uniformly on compacts of} \quad \calR_R,
%\frac{-\lambda+\sqrt{\lambda^2-4}}{2} \bdone \quad \mbox{ uniformly on compacts of} \quad \calR_R,
\end{equation*}
where $\mathfrak{m}^{(n)}$ is the Borel transform of the spectral measure for the $n$ times stripped operator $\calJ^{(n)}$, and $\mathfrak{m}^0$ is the Borel transform of the spectral measure for the free block Jacobi matrix.
\end{lemma}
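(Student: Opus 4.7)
My plan is to reduce the claim to a convergence statement for the matrix-valued Jost solutions that underlie the meromorphic continuation constructions in Lemmas~\ref{p4_lm_M} and~\ref{p4_lm_M_finite}. Under the hypotheses, \cite{K_jost} provides, for $\lambda=z+z^{-1}$ with $|z|<R$, a matrix-valued Jost solution $\mathfrak{u}_n(z)$ of the Jacobi eigenvalue equation $A_{n-1}^{*}\mathfrak{u}_{n-1}+B_n\mathfrak{u}_n+A_n\mathfrak{u}_{n+1}=\lambda\mathfrak{u}_n$, which is analytic in $z$ on $\{|z|<R\}$ and is fixed by the asymptotic normalization $\mathfrak{u}_n(z)\,z^{-n}\to\bdone$ as $n\to\infty$. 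The $m$-function then admits a representation of the form $\mathfrak{m}(\lambda)=-\mathfrak{u}_1(z)[A_0^{*}\mathfrak{u}_0(z)]^{-1}$ (with the usual index conventions), which is precisely the vehicle encoding the continuation to $\calR_R$ and the pole-structure conditions (II)(B)--(D).

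The key observation is that the stripping map $\calJ\mapsto\calJ^{(k)}$ acts on Jost solutions by a simple index shift: $\mathfrak{u}^{(k)}_n(z)=\mathfrak{u}_{n+k}(z)$. Consequently
\[
\mathfrak{m}^{(k)}(\lambda)=-\mathfrak{u}_{k+1}(z)[A_k^{*}\mathfrak{u}_k(z)]^{-1}=-z\cdot\bigl(\mathfrak{u}_{k+1}(z)\,z^{-(k+1)}\bigr)\bigl[A_k^{*}\bigl(\mathfrak{u}_k(z)\,z^{-k}\bigr)\bigr]^{-1},
\]
and the Jost normalization together with $A_k\to\bdone$ forces the right-hand side to converge to $-z\,\bdone=\mathfrak{m}^0(\lambda)$. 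Thus convergence on all of $\calR_R$ reduces to showing that $\mathfrak{u}_n(z)\,z^{-n}\to\bdone$ \emph{uniformly} on compacts of $\{|z|<R\}$.

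This last point is the technical heart and follows from the discrete Volterra-type equation that defines the Jost solution. Indeed, $\mathfrak{u}_n(z)\,z^{-n}$ satisfies a fixed-point equation whose kernel is linear in the perturbation parameters $(\bdone-A_jA_j^{*})$ and $B_j$ for $j\ge n$. The hypothesis $\limsup_{n\to\infty}(\|B_n\|+\|\bdone-A_nA_n^{*}\|)^{1/2n}\le R^{-1}$ gives that for every $\rho<R$ the tail sums $\sum_{j\ge n}(\|B_j\|+\|\bdone-A_jA_j^{*}\|)\rho^{j}$ are finite and tend to $0$ as $n\to\infty$; a Gronwall-type bound then yields the desired uniform convergence on compacts of $\{|z|<\rho\}$. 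Under the stronger hypothesis of Lemma~\ref{p4_lm_M_finite} the tail is eventually identically zero, so $\mathfrak{u}_n(z)=z^{n}\bdone$ for $n$ sufficiently large and convergence becomes trivial.

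The main obstacle will be running the Volterra estimate cleanly while controlling the invertibility of $A_k^{*}\mathfrak{u}_k(z)$; zeros of $\det\mathfrak{u}_k(z)$ correspond precisely to poles of $\mathfrak{m}^{(k)}$, whose positions are constrained but not eliminated by (II)(B)--(D). A cleaner route that sidesteps direct pole-tracking is a Vitali argument: one shows that $\{\mathfrak{m}^{(k)}\}$ is a normal family on $\calR_R$ (viewed as taking values in the Riemann sphere) using the uniform pole-bounds implicit in (II)(B)--(D), uses norm-resolvent convergence to obtain $\mathfrak{m}^{(k)}\to\mathfrak{m}^0$ on an open subset of $\calR_+$, and invokes the identity principle on the connected Riemann surface $\calR_R$ to pin down every subsequential limit as $\mathfrak{m}^0$.
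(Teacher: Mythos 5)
Your proposal is correct in its main line, and it is essentially the proof that the cited source \cite{K_jost} would give: the paper itself does not prove Lemma~\ref{p4_convergence} but quotes it, and \cite{K_jost} is built around exactly the matrix Jost solutions $\mathfrak{u}_n$ and the representation of $\mathfrak{m}$ as a ratio of their boundary values. The index-shift identity $\mathfrak{u}^{(k)}_n=\mathfrak{u}_{n+k}$ under stripping, together with uniform convergence of the normalized Jost solution $\mathfrak{u}_n(z)z^{-n}\to\bdone$ on compacts of $\{|z|<R\}$, is the correct mechanism, and it recovers $\mathfrak{m}^0(\lambda)=-z\bdone$.

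Two comments. First, a small slip: with the hypothesis $\limsup_n(\|B_n\|+\|\bdone-A_nA_n^*\|)^{1/2n}\le R^{-1}$, the relevant tail that must vanish is $\sum_{j\ge n}(\|B_j\|+\|\bdone-A_jA_j^*\|)\rho^{2j}$ (note $\rho^{2j}$, not $\rho^{j}$); only then does the Volterra/Gronwall estimate close on $\{|z|<\rho\}$ for every $\rho<R$. Second, the ``main obstacle'' you flag at the end is not actually an obstacle once you have the uniform convergence $\mathfrak{u}_k(z)z^{-k}\to\bdone$: on any fixed compact $K\subset\{|z|<R\}$, this forces $\mathfrak{u}_k(z)$ (and hence $A_k^*\mathfrak{u}_k(z)$, using $A_k\to\bdone$) to be invertible for all large $k$, so the poles of $\mathfrak{m}^{(k)}$ leave $K$ and the convergence is genuine uniform convergence, not merely spherical. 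You should therefore simply note this and drop the Vitali alternative --- as written, the Vitali route has a real gap, since showing that $\{\mathfrak{m}^{(k)}\}$ is spherically normal on all of $\calR_R$ (not merely on $\calR_+$, where Herglotz bounds suffice) is precisely the nontrivial content, and (II)(B)--(D) do not hand you a uniform-in-$k$ bound on pole locations or multiplicities for free.
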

\begin{remarks}
1. In fact, $\mathfrak{m}^0(\lambda)=\frac{-\lambda\pm \sqrt{\lambda^2-4}}{2} \bdone$.

2. Convergence on compacts of $\calR_+$ (but not $\calR_R$) is obvious from the convergence of the resolvents.
\end{remarks}

%%%%%%%%%%%%%%%%%%%%%%%%%%%%%%%%%%%%%%%%%%%%%%%%%%%%%%%%%%%%%%%%%%%%%%%%%%

Let us define the second kind polynomials by
\begin{equation*}
\mathfrak{q}_n^R(z)=\int_\bbR d\mu(x) \frac{\mathfrak{p}_n^R(z)-\mathfrak{p}_n^R(x)}{z-x}, \quad n=0,1,\ldots .
\end{equation*}
It can be shown that $\mathfrak{q}_n^R$ are polynomials of degree $n-1$, and that they satisfy the same recurrence relations \eqref{p3_recur}.  For future reference,
\begin{align} \label{p4_ee1.11}
\mathfrak{p}_0^R(z) &= \bdone, \quad \mathfrak{p}_1^R(z)=(z-B_1) A_1^*{}^{-1},
\\
 \label{p4_ee1.12}
\mathfrak{q}_0^R(z)&=\bdnot, \quad \mathfrak{q}_1^R(z)= A_1^*{}^{-1}.
\end{align}
Define also $\mathfrak{q}_n^L=\mathfrak{q}_n^R(\bar{z})^*$.

The resolvent of $\calJ$ has the following block form (see \cite[Thm 2.29]{DPS})
\begin{equation}\label{p4_resolvent}
(\calJ-z)^{-1}=\left(
\begin{array}{cccc}
\mathfrak{m} & \mathfrak{q}_1^R+\mathfrak{m} \mathfrak{p}_1^R & \mathfrak{q}_2^R+\mathfrak{m} \mathfrak{p}_2^R & \cdots \\
\mathfrak{q}_1^L+\mathfrak{p}_1^L \mathfrak{m} & \mathfrak{q}_1^L \mathfrak{p}_1^R+\mathfrak{p}_1^L \mathfrak{m} \mathfrak{p}_1^R & \mathfrak{p}_1^L \mathfrak{q}_2^R+\mathfrak{p}_1^L \mathfrak{m} \mathfrak{p}_2^R &\cdots \\
\mathfrak{q}_2^L+\mathfrak{p}_2^L \mathfrak{m} & \mathfrak{q}_2^L \mathfrak{p}_1^R+\mathfrak{p}_2^L \mathfrak{m} \mathfrak{p}_1^R & \mathfrak{q}_2^L \mathfrak{p}_2^R+\mathfrak{p}_2^L \mathfrak{m} \mathfrak{p}_2^R &\cdots \\
\vdots &\vdots &\vdots&\ddots
\end{array}
\right)(z),
\end{equation}
i.e., its $(i,j)$-th block entry is $\mathfrak{q}_{i-1}^L \mathfrak{p}_{j-1}^R+\mathfrak{p}_{i-1}^L \mathfrak{m} \mathfrak{p}_{j-1}^R$  if $i\ge j$, and $\mathfrak{p}_{i-1}^L \mathfrak{q}_{j-1}^R+\mathfrak{p}_{i-1}^L \mathfrak{m} \mathfrak{p}_{j-1}^R$ otherwise.

\end{section}

\medskip
%%%%%%%%%%%%%%%%%%%%%%%%%%%%%%%%%%%%%%%%%%%%%%%%%%%%%%%%%%%%%%%%%%%%%%%

\begin{section}{Periodic Jacobi Matrices}\label{Appendix_Periodic}

By periodic Jacobi matrices we mean the (scalar) Jacobi matrices satisfying \eqref{p4_periodic} for some $p$.
%By periodic orthogonal polynomials we mean the orthogonal polynomials corresponding to such Jacobi matrices.
We already mentioned some properties of them in Section \ref{Prelim_Periodic}. In particular we introduced the notion of discriminant $\Delta$ of a periodic Jacobi matrix. As we already mentioned, $\Delta$ determines the essential spectrum of $\calJ$. Next lemma contains some further properties of $\calJ$ and $\Delta$. %For a proof, see, e.g., \cite{Rice}.

\begin{lemma}\label{discriminant}
Let $\calJ$ be a $($one-sided$)$ $p$-periodic Jacobi matrix, $m$ the Borel transform of the spectral measure, and $\Delta$ its discriminant \eqref{p4_discr}. Then
\begin{list}{}
{
\setlength\labelwidth{2em}%
\setlength\labelsep{0.5em}%
\setlength\leftmargin{2.5em}%
}
\item[(i)]
\begin{itemize}
\item[$\bullet$] $\Delta^{-1}([-2,2])\subset\bbR$.
\item[$\bullet$] Let $x_1^\pm\le x_2^\pm \le\ldots \le x_p^\pm$ be the zeros $($counting multiplicity$)$ of $\Delta(\lambda)\mp 2$. Then
\begin{equation*}
x_p^+>x_p^-\ge x_{p-1}^->x_{p-1}^+ \ge x_{p-2}^+>x_{p-2}^-\ge\ldots.
\end{equation*}
\item[$\bullet$] $\Delta(\lambda)$ is strictly increasing on each interval $(x_{p-2j}^-,x_{p-2j}^+)$ and strictly decreasing on each interval $(x_{p-2j-1}^+,x_{p-2j-2}^-)$, $j=0,1,\ldots$. In particular the $p-1$ solutions  of $\Delta'(\lambda)=0$ are all real and lie one per each gap. If a gap is open, then the corresponding solution lies in the gap's interior.
\end{itemize}
\item[(ii)] $m$ has a meromorphic continuation to $\calS_\fre$ and its two branches are given by
\begin{equation}\label{m_periodic}
m(z)=\frac{-\beta(z)\pm \sqrt{\beta(z)^2-4\alpha(z)\gamma(z)}}{2\alpha(z)},
\end{equation}
where $\alpha(z)=a_p p_{p-1}(z)$, $\beta(z)=p_p(z)+a_p q_{p-1}(z)$, $\gamma(z)=q_p(z)$. Moreover,
$$
\beta(z)^2-4\alpha(z)\gamma(z) = \Delta(z)^2-4.
$$
%\item[(i)] $\Delta(z)=p_p(z)-a_p q_{p-1}(z)$, where $p_j,q_j$ are orthogonal polynomials of the first and second kind of $\calJ$.
%\item[(ii)] $\Delta(z)=\frac{1}{a_1\ldots a_p} \prod_{j=1}^p (z-b_j)+O(z^{p-2})$.
%\item[(iii)]
\end{list}
\end{lemma}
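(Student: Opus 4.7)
The plan hinges on setting up the $2\times 2$ transfer matrix
\[
T_p(z)\;=\;\prod_{j=p}^{1}\frac{1}{a_j}\begin{pmatrix} z-b_j & -1 \\ a_j^2 & 0\end{pmatrix},
\]
each of whose factors has determinant $1$ and whose trace is by definition $\Delta(z)$. A direct induction on the three-term recurrence \eqref{intro2}, applied with initial data $(p_0,a_0 p_{-1})^T=(1,0)^T$ for $p_n$ and $(q_0,a_0 q_{-1})^T=(0,-1)^T$ for $q_n$ (the latter extracted from $q_1=1/a_1$, cf.\ \eqref{p4_ee1.12}), identifies
\[
T_p(z)\;=\;\begin{pmatrix} p_p(z) & -q_p(z) \\ a_p p_{p-1}(z) & -a_p q_{p-1}(z)\end{pmatrix}.
\]
Thus $\Delta(z)=p_p(z)-a_p q_{p-1}(z)$, and $\det T_p\equiv 1$ provides the Wronskian relation $a_p\bigl(p_{p-1}q_p-p_p q_{p-1}\bigr)=1$. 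Both parts of the lemma flow from these identities.

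For (i), the inclusion $\Delta^{-1}([-2,2])\subset\bbR$ is the statement that the essential spectrum of the self-adjoint two-sided operator $(a_n,b_n)_{n\in\bbZ}$ is real, which is immediate from the identification of that spectrum with $\Delta^{-1}([-2,2])$ via Floquet theory applied to $T_p(z)$. The zeros of $\Delta\mp 2$ are the eigenvalues of the $p$-dimensional self-adjoint operator obtained by imposing periodic (resp.\ antiperiodic) boundary conditions on one period, and these two operators are rank-one perturbations of each other and of the Dirichlet truncation; standard min–max comparison then yields the stated interlacing of $\{x_j^\pm\}$. Given the interlacing, the real polynomial $\Delta$ of degree $p$ must map each of the $p$ bands homeomorphically onto $[-2,2]$, so $\Delta$ is strictly monotone on each band, and its $p-1$ critical points lie one per gap; simplicity of those critical points in \emph{open} gaps comes from the fact that in that case $\Delta^2-4$ has only simple roots at the band edges.

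For (ii), I will iterate the single-step Möbius relation \eqref{m_recur} (equivalently, apply the Schur complement to the resolvent identity \eqref{p4_resolvent} at the $p$-th row) to obtain
\[
m(z)\;=\;-\frac{q_p(z)+a_p q_{p-1}(z)\,m^{(p)}(z)}{p_p(z)+a_p p_{p-1}(z)\,m^{(p)}(z)};
\]
$p$-periodicity gives $\calJ^{(p)}=\calJ$, hence $m^{(p)}=m$, and clearing denominators produces $\alpha(z)m^2+\beta(z)m+\gamma(z)=0$ with exactly the $\alpha,\beta,\gamma$ stated, whose quadratic formula yields \eqref{m_periodic}. For the discriminant identity, I compute
\[
\beta^2-4\alpha\gamma-(\Delta^2-4)=\bigl[(p_p+a_p q_{p-1})^2-(p_p-a_p q_{p-1})^2\bigr]-4a_p p_{p-1}q_p+4
=4a_p(p_p q_{p-1}-p_{p-1}q_p)+4,
\]
which vanishes by the Wronskian $a_p(p_{p-1}q_p-p_p q_{p-1})=1$. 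Since $\Delta^2-4$ has its zeros exactly at the band edges $\cup_j\{\alpha_j,\beta_j\}$, which are the branch points of $\calS_\fre$, the function $\sqrt{\Delta^2-4}$ descends to a single-valued meromorphic function on $\calS_\fre$, and \eqref{m_periodic} furnishes the meromorphic continuation of $m$ to all of $\calS_\fre$. The main obstacle is less conceptual than bookkeeping: one must pin down the sign conventions in the transfer matrix representation and the Möbius relation, since a sign flip would produce $\beta=p_p-a_p q_{p-1}=\Delta$, trivializing and breaking the discriminant identity. A sanity check at $p=1$ against the free formula \eqref{m_free} is the simplest way to lock in the correct signs; once these are fixed, the Wronskian cancellation is a one-line algebraic verification.
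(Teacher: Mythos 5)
Your proof of part (ii) is correct and is the standard argument. Identifying the columns of $T_p(z)$ with $(p_p,\,a_p p_{p-1})^T$ and $-(q_p,\,a_p q_{p-1})^T$ from the three-term recurrence and the initial data, reading off $\Delta = p_p - a_p q_{p-1}$ from the trace, extracting the Wronskian $a_p(p_{p-1}q_p - p_p q_{p-1}) = 1$ from $\det T_p \equiv 1$, iterating the M\"obius step $p$ times, and imposing $m^{(p)} = m$ gives precisely $\alpha m^2 + \beta m + \gamma = 0$ with the stated coefficients; the discriminant identity then collapses to the Wronskian. Your $p=1$ sanity check against \eqref{m_free} is the right way to fix the sign conventions. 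The paper quotes this lemma from \cite{Rice} without proof, so there is nothing in the text to compare against; your (ii) is the textbook argument.

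Part (i) has a genuine error in the mechanism you offer for the interlacing, even though the conclusion is standard. The periodic and antiperiodic truncations on one period are $H_D \pm P$, where $H_D$ is the Dirichlet restriction to $\{1,\dots,p\}$ and $P$ is the $p\times p$ matrix with $a_p$ in the $(1,p)$ and $(p,1)$ entries and zeros elsewhere. Thus $H_{\mathrm{per}} - H_{\mathrm{anti}} = 2P$, and $P = a_p(\delta_1\delta_p^* + \delta_p\delta_1^*)$ has rank \emph{two}, with eigenvalues $\pm a_p$ on $\spann\{\delta_1 \pm \delta_p\}$; likewise each of $H_{\mathrm{per}}$, $H_{\mathrm{anti}}$ is a rank-two (not rank-one) perturbation of $H_D$. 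A perturbation with one positive and one negative eigenvalue only yields, by min--max, the two-sided estimate $\lambda_{k-1}(H_{\mathrm{anti}}) \le \lambda_k(H_{\mathrm{per}}) \le \lambda_{k+1}(H_{\mathrm{anti}})$, which does not produce the tight alternating pattern $x_p^+ > x_p^- \ge x_{p-1}^- > x_{p-1}^+ \ge x_{p-2}^+ > \cdots$ in the lemma, and in particular gives no information about the strict inequalities (band non-degeneracy) that separate $x_k^+$ from $x_k^-$. The correct route is the one your own third bullet essentially invokes: the first bullet ($\Delta^{-1}([-2,2]) \subset \bbR$, which does follow from Floquet/Bloch theory and self-adjointness of the whole-line operator), together with the facts that $\Delta$ is a real polynomial of degree $p$ with positive leading coefficient $\prod_{j=1}^p a_j^{-1}$, forces every critical value of $\Delta$ to satisfy $|\Delta(\gamma_j)| \ge 2$ with $\gamma_j$ real; tracing the sign changes of $\Delta \mp 2$ leftward from $z = +\infty$ then produces exactly the stated alternation, and $\Delta$ is automatically strictly monotone on each band. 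With that repair the sketch of (i) is complete.
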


There is a nice connection between the theory of periodic orthogonal polynomials and matrix-valued orthogonal polynomials.
Note that applying a polynomial of degree $p$ to the tridiagonal matrix $\calJ$ gives us $(2p+1)$-diagonal matrix, which can be viewed as a block Jacobi matrix with $p\times p$ matrix-valued Jacobi parameters $A_n, B_n$ (note that $A_n$ are lower triangular).

Let $S$ be the right shift operator on $\ell^2(\bbZ)$. Note that $S^p+S^{-p}$ is the free block Jacobi matrix with $p\times p$ block entries. % a block  Jacobi matrix with $p\times p$ blocks $\bdnot$ on diagonal and $\bdone$ above and below it (the ``free'' block Jacobi matrix).

\begin{lemma}[``Magic Formula'', Damanik--Killip--Simon \cite{DKS}]\label{magic}
Let $\calJ_0$ be a $p$-periodic Jacobi matrix with discriminant $\Delta_{\calJ_0}$ and isospectral torus $\calT_\fre$. Let $\calJ$ be any two-sided Jacobi matrix. Then
\begin{equation*}
\Delta_{\calJ_0}(\calJ)=S^p+S^{-p} \quad \Leftrightarrow \quad \calJ\in \calT_\fre.
\end{equation*}
\end{lemma}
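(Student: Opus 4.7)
The plan is to prove the two implications separately, using the Floquet--Bloch decomposition for the forward direction and an entrywise matching argument for the converse.

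For the forward implication, suppose $\calJ=(a_n,b_n)_{n\in\bbZ}\in\calT_\fre$. Regrouping $\ell^2(\bbZ)$ into blocks of length $p$ and taking the discrete Fourier transform in the block index produces a unitary $\calF\colon\ell^2(\bbZ)\to L^2(\partial\bbD;\bbC^p)$ that diagonalizes every $p$-translation-invariant operator. Under $\calF$, the operator $S^p+S^{-p}$ becomes multiplication by $(\zeta+\zeta^{-1})I_p$, and $\calJ$ becomes a direct integral $\int^\oplus\calJ(\zeta)\,d\zeta$ of $p\times p$ Floquet matrices. A classical transfer-matrix computation identifies the eigenvalues of $\calJ(\zeta)$ as the $p$ branches $\{f_j(\zeta+\zeta^{-1})\}_{j=1}^p$ of $\Delta_{\calJ_0}^{-1}(\zeta+\zeta^{-1})$ (the notation of Section~\ref{Proofs_Notation}); consequently $\Delta_{\calJ_0}(\calJ(\zeta))$ is diagonalizable with all eigenvalues equal to $\zeta+\zeta^{-1}$, hence equals $(\zeta+\zeta^{-1})I_p$. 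Undoing $\calF$ yields $\Delta_{\calJ_0}(\calJ)=S^p+S^{-p}$.

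For the converse, the plan is to extract periodicity directly from matrix entries of $\Delta_{\calJ_0}(\calJ)$. Since $\Delta_{\calJ_0}$ has degree $p$, the operator $\Delta_{\calJ_0}(\calJ)$ is $(2p+1)$-diagonal, and a path count gives $(\Delta_{\calJ_0}(\calJ))_{i,i+p}=c_0\,a_ia_{i+1}\cdots a_{i+p-1}$, where $c_0$ is the leading coefficient of $\Delta_{\calJ_0}$. Matching to the corresponding entry of $S^p+S^{-p}$ (which is $1$) gives $a_ia_{i+1}\cdots a_{i+p-1}=c_0^{-1}$ for every $i\in\bbZ$; dividing two consecutive such identities yields $a_i=a_{i+p}$. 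An analogous path count gives $(\Delta_{\calJ_0}(\calJ))_{i,i+p-1}=\bigl(\prod_{k=i}^{i+p-2}a_k\bigr)\bigl(c_0\sum_{j=i}^{i+p-1}b_j+c_1\bigr)$, with $c_1$ the subleading coefficient of $\Delta_{\calJ_0}$; matching to $0$ and subtracting consecutive relations forces $b_i=b_{i+p}$. Thus $\calJ$ is $p$-periodic. Applying the already proven forward direction to this periodic $\calJ$ yields $\Delta_\calJ(\calJ)=S^p+S^{-p}=\Delta_{\calJ_0}(\calJ)$, so the polynomial $\Delta_\calJ-\Delta_{\calJ_0}$ of degree at most $p$ vanishes on the infinite set $\sigma(\calJ)$ and must be identically zero; therefore $\calJ$ has the same discriminant as $\calJ_0$ and lies in $\calT_\fre$.

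The delicate point will be the forward direction's identification of the Floquet fiber spectrum, i.e.\ showing that the eigenvalues of $\calJ(\zeta)$ are exactly the branches of $\Delta_{\calJ_0}^{-1}(\zeta+\zeta^{-1})$. This is classical but relies on invoking the $p$-step transfer matrix $T_p(E)$ with $\det T_p(E)=1$, whose characteristic polynomial $\zeta^2-\Delta_{\calJ_0}(E)\zeta+1$ ties the discriminant directly to the Floquet multipliers. Once this bookkeeping is in place, the diagonalizability of $\calJ(\zeta)$ away from finitely many branch points, together with the continuous extension of $\Delta_{\calJ_0}(\calJ(\zeta))=(\zeta+\zeta^{-1})I_p$ through them, is routine, and the remainder of the argument is purely algebraic.
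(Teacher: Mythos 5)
The paper does not reproduce a proof of this lemma; it cites it directly to Damanik--Killip--Simon \cite{DKS} and uses it as a black box. So there is no in-paper argument to compare against. Your proof is correct, and the structure (Floquet--Bloch decomposition for the forward implication, entrywise matching of the two outermost off-diagonals for the converse, then spectral mapping to identify discriminants) is the standard route.

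Two places where you gloss over something worth spelling out. First, in the forward direction you identify the eigenvalues of the Floquet fiber $\calJ(\zeta)$ with the branches of $\Delta_{\calJ_0}^{-1}(\zeta+\zeta^{-1})$, but the transfer-matrix argument actually produces the branches of $\Delta_\calJ^{-1}(\zeta+\zeta^{-1})$, the discriminant of $\calJ$ itself: $E$ is an eigenvalue of $\calJ(\zeta)$ iff $\zeta$ is a Floquet multiplier of $T_p(E;\calJ)$, iff $\zeta^2-\Delta_\calJ(E)\zeta+1=0$. You then need the (true, but nontrivial) fact that $\calJ\in\calT_\fre$ forces $\Delta_\calJ=\Delta_{\calJ_0}$, which follows from the uniqueness of the degree-$p$ polynomial with $\Delta^{-1}([-2,2])=\fre$ (this is essentially the $p$-th Chebyshev polynomial of $\fre$). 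Second, you can dispense with the diagonalizability-plus-continuity step entirely by invoking Cayley--Hamilton: the characteristic polynomial of $\calJ(\zeta)$ is $\det(E\bdone-\calJ(\zeta))=c_0^{-1}\bigl(\Delta_\calJ(E)-(\zeta+\zeta^{-1})\bigr)$ (both sides are monic degree-$p$ in $E$, polynomial in $\zeta+\zeta^{-1}$, and share roots with multiplicity for generic $\zeta$, hence coincide identically), and then Cayley--Hamilton gives $\Delta_\calJ(\calJ(\zeta))=(\zeta+\zeta^{-1})\bdone$ for every $\zeta\in\partial\bbD$ with no case distinction. Your converse argument is complete as written: the $(i,i+p)$ matching gives $a_{i+p}=a_i$, the $(i,i+p-1)$ matching gives $b_{i+p}=b_i$, and applying the forward direction to the now-periodic $\calJ$ yields $(\Delta_\calJ-\Delta_{\calJ_0})(\calJ)=0$, so the degree-$\le p$ polynomial $\Delta_\calJ-\Delta_{\calJ_0}$ vanishes on the infinite set $\sigma(\calJ)$ and is identically zero.
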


Moreover we can ``perturb'' this result if all gaps are open.

\begin{lemma}\label{p4_DKS}
Let $\calJ_0$ be a $p$-periodic Jacobi matrix with discriminant $\Delta_{\calJ_0}$ and isospectral torus $\calT_\fre$, such that all gaps of $\calJ_0$ are open $($every interval of $\mathfrak{e}$ has equal equilibrium measure$)$. Let $\calJ$ be any two-sided Jacobi matrix, and let $(A_n,B_n)_{n\in\bbZ}$ be the $p\times p$ Jacobi parameters of $\Delta_{\calJ_0}(\calJ)$. Then the following are equivalent:
\begin{list}{}
{
\setlength\labelwidth{2em}%
\setlength\labelsep{0.5em}%
\setlength\leftmargin{2.5em}%
}
\item[(i)] $  \limsup_{n\to\infty} \left(|a_n-a_n^{(0)}| + |b_n-b_n^{(0)}|\right)^{1/2n}  \le R^{-1},$ where $\big(a_n^{(0)},b_n^{(0)}\big)_{n=1}^\infty$ is a periodic Jacobi matrix from $\calT_\fre$.
\item[(I)] $\limsup_{n\to\infty} (||\bdone-A_nA_n^*||+||B_n||)^{1/2n} \le R^{-1}$.
\end{list}
\end{lemma}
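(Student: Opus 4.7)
The plan hinges on the Magic Formula, Lemma~\ref{magic}, which asserts $\Delta_{\calJ_0}(\calJ^{(0)})=S^p+S^{-p}$ for every $\calJ^{(0)}\in\calT_\fre$. Condition~(I) is then precisely the statement that $\Delta_{\calJ_0}(\calJ)$ converges to $\Delta_{\calJ_0}(\calJ^{(0)})$ at the specified exponential rate, and the entire task reduces to transferring exponential rates between the scalar parameters of $\calJ$ and the block parameters of $\Delta_{\calJ_0}(\calJ)$.

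For the direction (i)$\Rightarrow$(I), I would exploit that $\Delta_{\calJ_0}$ is a polynomial of degree $p$ and expand
\begin{equation*}
\Delta_{\calJ_0}(\calJ) - \Delta_{\calJ_0}(\calJ^{(0)}) = \sum_{j=1}^{p} d_j \sum_{k=0}^{j-1} \calJ^k \,(\calJ-\calJ^{(0)})\,(\calJ^{(0)})^{j-1-k},
\end{equation*}
where $d_j$ denote the (fixed) coefficients of $\Delta_{\calJ_0}$. Because both $\calJ$ and $\calJ^{(0)}$ are tridiagonal, each scalar entry of the left-hand side is a polynomial in the finitely many perturbations $\{a_k-a_k^{(0)},\,b_k-b_k^{(0)}\}$ with $k$ in a window of width $O(p)$, multiplied by bounded factors coming from entries of $\calJ^k$ and $(\calJ^{(0)})^{j-1-k}$. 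The scalar exponential decay in (i) therefore transfers to each such entry, and hence to the block parameters $A_n, B_n$ at the corresponding position, yielding~(I).

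The reverse direction (I)$\Rightarrow$(i) is more delicate and requires a local inversion of the polynomial map $\Phi:\calJ\mapsto\Delta_{\calJ_0}(\calJ)$. The crucial structural input is the ``all gaps open'' hypothesis: it forces $\calT_\fre$ to have its generic dimension $g=p-1$ and makes the differential $D\Phi$ at any point of $\calT_\fre$ have kernel exactly equal to the tangent space of $\calT_\fre$ and be non-degenerate on a transverse complement. I would therefore: (a)~use this non-degeneracy together with compactness of $\calT_\fre$ to pin down a specific $\calJ^{(0)}\in\calT_\fre$ capturing the asymptotic behavior of $\calJ$; (b)~apply a bounded right-inverse of $D\Phi$ on a transverse complement to solve, position by position, for the scalar perturbations in terms of the matrix perturbations; (c)~propagate the exponential rate through this bounded linear inversion.

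The main obstacle is steps~(a)--(b) of the reverse direction: nailing down $\calJ^{(0)}$ within the $(p-1)$-dimensional isospectral torus and producing a transverse inverse of $D\Phi$ that is uniformly bounded and sufficiently local to respect the exponential decay rate. A natural device is to pass to the stripped matrices $\calJ^{(n)}$, translating the tail behavior of $\calJ$ into statements about $\Phi$ applied to those stripped matrices (which become asymptotically free by~(I)), and to handle the one-sided boundary via an argument analogous to Lemma~\ref{p4_may4}.
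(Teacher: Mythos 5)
Your forward direction (i)$\Rightarrow$(I) is essentially the paper's argument, phrased more explicitly: the paper cites \cite[Section 11]{DKS} for the observation that each $\bdone-A_n$ and $B_n$ is a smooth function of $p$ consecutive pairs $(a_k,b_k)$ vanishing on $\calT_\fre$, then invokes Lipschitz continuity, whereas you write out the telescoping expansion of $\Delta_{\calJ_0}(\calJ)-\Delta_{\calJ_0}(\calJ^{(0)})$ by hand. One detail you omit is the passage between $||\bdone-A_n||$ and $||\bdone-A_nA_n^*||$: this equivalence (inequality~\eqref{matrix_in} in the paper, from \cite[Prop~11.12]{DKS}) is not automatic and relies on the $A_n$ being lower triangular and close to $\bdone$, which is a structural feature of the block decomposition of $\Delta_{\calJ_0}(\calJ)$.

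For the reverse direction (I)$\Rightarrow$(i), you correctly identify the role of the ``all gaps open'' hypothesis (it makes $D\Phi$ transverse to $\calT_\fre$ and nondegenerate on a complement) and the need for a bounded local right-inverse, but you explicitly flag the two crucial steps — pinning down a single $\calJ^{(0)}$ in the $(p-1)$-torus and making the inversion local and uniformly bounded — as the ``main obstacle'' without resolving them. That gap is real: exponential decay of the distance from the tail of $\calJ$ to the \emph{set} $\calT_\fre$ does not by itself select a single reference matrix, since the nearest point could wander around the compact torus. The paper resolves both issues concretely. First, it imports the heavy lifting from DKS: after using the lower-triangular structure to convert $\lim A_nA_n^*=\bdone$ into $\lim A_n=\bdone$ and pass to $||\bdone-A_n||$, it cites \cite[Lemma 11.11, Theorem 11.13]{DKS} to conclude $\limsup_n d_n(\calJ,\calT_\fre)^{1/2n}\le R^{-1}$ for an exponentially weighted tail distance $d_n$. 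Second, it pins down $\calJ_0$ by a Cauchy-sequence argument: letting $\big(a_j^{(n)},b_j^{(n)}\big)$ be the minimizer of $d_n(\calJ,\cdot)$ over $\calT_\fre$, the decay of $d_n$ forces consecutive minimizers to differ by $O(R^{-2n})$, so they converge exponentially fast to a limit $\calJ_0\in\calT_\fre$ for which (i) holds. Your appeal to compactness of $\calT_\fre$ and a stripping argument à la Lemma~\ref{p4_may4} does not substitute for this; compactness only yields a convergent subsequence, not the single limit with the requisite rate. So the forward direction is sound but the reverse direction, as written, contains a genuine unfilled gap precisely where the paper does nontrivial work.
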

\begin{remark}
Since both conditions depend on the behavior of the coefficients at $+\infty$, this result can also be applied to one-sided Jacobi matrices $\calJ$.
\end{remark}
\begin{proof}
The proof of this lemma requires some slight modifications of the arguments of Damanik--Killip--Simon \cite{DKS}.

First of all, notice that there exist positive constants $c_1, c_2$ such that for all $A$ in a neighborhood of $\bdone$,
\begin{equation*}
c_1 ||\bdone-|A| \, || \le ||\bdone - AA^* || \le c_2 ||\bdone - |A| \, ||.
\end{equation*}
Moreover, for the lower-triangular matrices $A$ in a neighborhood of $\bdone$, we can find positive constants $\widetilde{c}_1, \widetilde{c}_2$ so that
\begin{equation}\label{matrix_in}
\widetilde{c}_1 ||\bdone-A \, || \le ||\bdone - AA^* || \le \widetilde{c}_2 ||\bdone - A \, ||
\end{equation}
(see \cite[Prop 11.12]{DKS}).

(i)$\Rightarrow$(I)  As was shown in \cite[Section 11]{DKS}, each $\bdone-A_n$ and $B_n$ is a smooth function of $p$ consecutive pairs $(a_n,b_n)$. Moreover, each one of them vanish on $\calT_\fre$. Therefore (i), Lipschitz property of smooth functions, and \eqref{matrix_in} imply (I).

\smallskip

(I)$\Rightarrow$(i) Note that $\lim_{n\to\infty} A_nA_n^* =\bdone$ implies $\lim_{n\to\infty} A_n = \bdone$ because of the fact that $A_n$'s are lower triangular (see \cite[Thm 2.9]{DPS}, as well as \cite{K_equiv}). Therefore $A_n$'s are in a neighborhood of $\bdone$, and so they eventually all satisfy \eqref{matrix_in}. Therefore we may assume that
$$
\limsup_{n\to\infty} ||\bdone - A_n ||_{HS}^{1/2n} \le R^{-1}, \quad \limsup_{n\to\infty} ||B_n||_{HS}^{1/2n} \le R^{-1},
$$
where $||\cdot||_{HS}$ is the Hilbert--Schmidt norm. Then following the same arguments as \cite[Lemma~11.11]{DKS} and then \cite[Theorem~11.13(i)$\Rightarrow$(vi)]{DKS}, we obtain that
\begin{equation}\label{d_n_minim}
\limsup_{n\to\infty} d_n(\calJ,\calT_\fre)^{1/2n} \le R^{-1},
\end{equation}
where $d_n(\calJ,\calT_\fre) = \inf \{ d_n(\calJ,\calJ_0): \calJ_0\in\calT_\fre\}$, where
\begin{equation*}
d_n((a_j,b_j),(a_j',b_j'))=\sum_{j=0}^\infty e^{-j} (|a_{n+j}-a'_{n+j}|+|b_{n+j}-b'_{n+j}|).
\end{equation*}
But this implies that there exists some $\calJ_0\in\calT_\fre$ so that (i) holds. Indeed, denote $\big(a_j^{(n)},b_j^{(n)}\big)_{j=1}^p$ to be the periodic Jacobi matrix from $\calT_\fre$ that minimizes $d_n(\calJ,\calT_\fre)$. Then \eqref{d_n_minim} implies
$$
\sup_{1\le j \le p} |a_j^{(n)}-a_j^{(n+1)}| + \sup_{1\le j \le p} |b_j^{(n)}-b_j^{(n+1)}| \le C R^{-2n}.
$$
This means that for each $j=1,\ldots,p$, the sequences $a_j^{(n)}$ and $b_j^{(n)}$ are Cauchy with respect to $n\to\infty$, and therefore have limits $a_j^{(0)}$ and $b_j^{(0)}$ respectively, to which they converge exponentially fast. If we let $\calJ_0=\big(a_j^{(0)},b_j^{(0)}\big)_{j=1}^p$, then \eqref{d_n_minim} gives $\limsup_{n\to\infty} d_n(\calJ,\calJ_0)^{1/2n} \le R^{-1}$, which implies (i).
\qed \end{proof}

\end{section}

%%%%%%%%%%%%%%%%%%%%%%%%%%%%%%%%%%%%%%%%%%%%%%%%%%%%%%%%%%%%%%%%%%%%%%%%%%%%%%%%%%%%%%%%%%%%%%%%%%%%

\begin{section}{Matrix-Valued Functions}\label{Appendix_Matrixvalued}

Throughout the paper, all meromorphic/analytic matrix functions are assumed to have  not identically vanishing  determinant.

The order of a pole of a $p\times p$ matrix-valued meromorphic function $f$ is defined to be the minimal $k>0$ such that $\lim_{z\to z_0} (z-z_0)^k f(z)$ is a finite non-zero matrix.

By a zero of a matrix-valued meromorphic function $f$ we call a point at which $f^{-1}$ has a pole.

Denote by $\delta_j\in\bbC^p$, $1\le j\le p$, the column vector having $1$ on the $j$-th position, and $0$ everywhere else.

We will make use of the so-called (local) Smith--McMillan form (see, e.g.,~\cite[Thm 3.1.1]{Ball}).

\begin{lemma}\label{p4_lmSM}
Let $f(z)$ be a $p\times p$ matrix-valued function meromorphic at $z_0$ with determinant not identically zero. Then $f(z)$ admits the representation
\begin{equation}\label{p4_ee1.31}
f(z)=E(z) \diag\left((z-z_0)^{\kappa_1},\ldots,(z-z_0)^{\kappa_p}\right) F(z),
\end{equation}
where $E(z)$ and $F(z)$ are $p\times p$ matrix-valued functions which are analytic and invertible in a neighborhood of $z_0$, and $\kappa_1\ge \kappa_2 \ge \ldots \ge\kappa_p$ are integers (positive, negative, or zero).
\end{lemma}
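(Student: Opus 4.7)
The plan is to reduce this lemma to the classical Smith normal form over a discrete valuation ring. First, since $f$ is meromorphic at $z_0$, I would pick $N\in\bbZ$ large enough that $g(z)=(z-z_0)^N f(z)$ is analytic at $z_0$. Its entries then lie in the local ring $\mathcal{O}_{z_0}$ of germs of analytic functions at $z_0$, which is a discrete valuation ring with uniformizer $\pi=z-z_0$: every nonzero element factors uniquely as $\pi^k\cdot u$ with $u(z_0)\ne 0$. Since $\det f\not\equiv 0$ by hypothesis, $\det g\not\equiv 0$ as well, so the $\pi$-adic valuation of $\det g$ is finite and at no stage will every entry of the working matrix vanish identically.

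Next, I would run the standard Smith normal form algorithm on $g$. At each stage, locate an entry of minimal $\pi$-adic valuation, move it to the $(1,1)$ position by swapping two rows and two columns, and then use it to clear the rest of the first row and column. Concretely, writing the pivot as $g_{11}=\pi^{k_1}u(z)$ with $u(z_0)\ne 0$, each other entry $g_{1j}=\pi^{k_j}v_j(z)$ of the first row satisfies $k_j\ge k_1$, so subtracting $\pi^{k_j-k_1}v_j(z)/u(z)$ times column $1$ from column $j$ zeroes out $g_{1j}$; the multiplier lies in $\mathcal{O}_{z_0}$, and the operation is right multiplication by a unipotent $\mathcal{O}_{z_0}$-matrix. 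Clearing the first column symmetrically and recursing on the lower-right $(p-1)\times(p-1)$ block produces
\begin{equation*}
g(z)=\widetilde{E}(z)\,\diag\bigl((z-z_0)^{d_1},\ldots,(z-z_0)^{d_p}\bigr)\,\widetilde{F}(z),
\end{equation*}
with $d_1,\ldots,d_p\ge 0$ integers and $\widetilde{E},\widetilde{F}$ products of permutation and unipotent $\mathcal{O}_{z_0}$-matrices, hence analytic with analytic inverses at $z_0$.

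Finally, a simultaneous permutation of rows and columns reorders the diagonal entries into decreasing order $\kappa_1\ge\cdots\ge\kappa_p$; absorbing the permutation matrices into $\widetilde{E}$, $\widetilde{F}$ and dividing through by $(z-z_0)^N$ yields the form \eqref{p4_ee1.31} with $\kappa_j=d_{\sigma(j)}-N\in\bbZ$. I expect the only real obstacle to be organizational: verifying at each elimination step that the chosen pivot genuinely has minimal valuation in the current matrix (so the quotients $\pi^{k_j-k_1}v_j/u$ really lie in $\mathcal{O}_{z_0}$), and checking that the cumulative flanking factors $E$ and $F$ remain both analytic and invertible at $z_0$. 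Both points follow cleanly from the DVR structure of $\mathcal{O}_{z_0}$, so the argument is essentially the classical Smith normal form over a PID, specialized to germs of analytic functions at a point.
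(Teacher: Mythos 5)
Your argument is correct, and it is worth noting that the paper itself does not prove this lemma at all: it is stated as a citation to \cite[Thm 3.1.1]{Ball} (Ball--Gohberg--Rodman). So there is no "paper's proof" to compare against; you have supplied a self-contained derivation. Your route --- clear the pole by multiplying by $(z-z_0)^N$, then run the Smith normal form algorithm over the local ring $\mathcal{O}_{z_0}$ of analytic germs, which is a discrete valuation ring with uniformizer $z-z_0$ --- is exactly the standard proof of the local Smith--McMillan form, and all the steps you flag as requiring care do in fact go through. In particular: the column operation $C_j \leftarrow C_j - \pi^{k_j-k_1}(v_j/u)\,C_1$ has multiplier in $\mathcal{O}_{z_0}$ precisely because $k_j\ge k_1$ and $u(z_0)\ne0$; after clearing the first row by column operations, the first column is untouched, so the pivot remains of minimal valuation and the first column can be cleared by row operations without disturbing the already-zeroed first row; and the entries of the lower-right block after both elimination passes, namely $g_{ij}-g_{i1}g_{1j}/g_{11}$, still have valuation $\ge k_1$, so the recursion automatically produces $d_1\le d_2\le\cdots\le d_p$, and a single reversal permutation gives the decreasing order the lemma requests. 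The flanking factors are products of permutation matrices and unipotent matrices $I + c\,E_{ij}$ with $c\in\mathcal{O}_{z_0}$, hence analytic with determinant a nonzero constant, hence invertible at $z_0$. Dividing back by $(z-z_0)^N$ shifts the exponents by $N$ and leaves $E,F$ unchanged. One cosmetic remark: the $\kappa_j$ are unique (they are the elementary divisor exponents, determined by the gcd's of the $k\times k$ minors), though the lemma as stated does not require uniqueness, so you were right not to labor the point.
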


This immediately gives us the following corollary.

\begin{lemma}\label{p4_lm0}
%\begin{itemize}
%\item[(i)]
Let $u$ be an analytic function at $z_0$ such that $z_0$ is a zero of $\det u$ of order $k>0$. Then $\dim \ker u(z_0)=k$ if and only if $z_0$ is a pole of $u(z)^{-1}$ of order $1$. % (meaning that $\res_{z=z_0} u(z)^{-1} \equiv\lim_{z\to z_0} (z-z_0) u(z)^{-1}$ is a finite non-zero matrix).

If this is the case, then
\begin{align*}
\ker\res_{z=z_0} u(z)^{-1} &=\ran u(z_0), \\
\ran \res_{z=z_0} u(z)^{-1} &=\ker u(z_0).
\end{align*}
\end{lemma}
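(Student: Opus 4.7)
The plan is to apply the Smith--McMillan decomposition from Lemma~\ref{p4_lmSM} and read off everything directly from the diagonal exponents. Write $u(z)=E(z)D(z)F(z)$ with $D(z)=\diag((z-z_0)^{\kappa_1},\ldots,(z-z_0)^{\kappa_p})$ and $E,F$ analytic and invertible at $z_0$. Since $u$ is analytic at $z_0$, every $\kappa_j\ge 0$, and since $\det E(z_0),\det F(z_0)\ne 0$, the hypothesis that $\det u$ has a zero of order $k$ at $z_0$ becomes $\kappa_1+\cdots+\kappa_p=k$.

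Next I would extract three facts from this normal form. First, the order of the pole of $u(z)^{-1}=F(z)^{-1}D(z)^{-1}E(z)^{-1}$ at $z_0$ equals $\max_j\kappa_j=\kappa_1$. Second, since $E(z_0)$ and $F(z_0)$ are invertible, $\dim\ker u(z_0)=\dim\ker D(z_0)=\#\{j:\kappa_j>0\}$. Combining these with the constraint $\sum\kappa_j=k$ and $\kappa_j\ge 0$: each of the conditions $\kappa_1=1$ and $\#\{j:\kappa_j>0\}=k$ is equivalent to the configuration $\kappa_1=\cdots=\kappa_k=1$, $\kappa_{k+1}=\cdots=\kappa_p=0$. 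This yields the stated equivalence.

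For the residue identities, assume we are in this configuration and let $P_k=\diag(\underbrace{1,\ldots,1}_{k},0,\ldots,0)$, so that $D(z_0)=\bdone-P_k$ and $\res_{z=z_0}D(z)^{-1}=P_k$. Hence
\[
u(z_0)=E(z_0)(\bdone-P_k)F(z_0),\qquad \res_{z=z_0}u(z)^{-1}=F(z_0)^{-1}P_k E(z_0)^{-1}.
\]
The kernel/range of these two operators are then immediate to compute using invertibility of $E(z_0), F(z_0)$: $\ran u(z_0)=E(z_0)\ran(\bdone-P_k)$, which coincides with $\ker(F(z_0)^{-1}P_k E(z_0)^{-1})=\ker\res u^{-1}$; and $\ran\res u^{-1}=F(z_0)^{-1}\ran P_k$, which coincides with $F(z_0)^{-1}\ker D(z_0)=\ker u(z_0)$.

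The argument is essentially mechanical once Smith--McMillan is invoked; the only minor subtlety is tracking that analyticity of $u$ forces $\kappa_j\ge 0$, which is what pins down the equivalence between simplicity of the pole of $u^{-1}$ and the kernel dimension matching the order of vanishing of $\det u$. No separate case analysis is needed beyond the normal form.
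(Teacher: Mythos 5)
Your proposal is correct and uses the same approach as the paper: invoke the local Smith--McMillan form, observe that analyticity forces all exponents nonnegative, recognize that both sides of the equivalence pin down $\kappa_1=\cdots=\kappa_k=1$ and $\kappa_{k+1}=\cdots=\kappa_p=0$, and then read off the kernel/range identities from $u(z_0)=E(z_0)D(z_0)F(z_0)$ and $\res u^{-1}=F(z_0)^{-1}P_kE(z_0)^{-1}$. Your write-up is slightly more explicit about the residue computation for the triple product, but the argument is the one the paper gives.
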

%\item[(ii)] Let
%\end{itemize}
\begin{proof}
Both of the conditions in the if-and-only-if statement are equivalent to saying that $\kappa_1=\ldots=\kappa_k=1$, $\kappa_{k+1}=\ldots=\kappa_p=0$ in the Smith-McMillan form of $u(z)$ at $z_0$. Then note that both $\ker\res_{z=z_0} u(z)^{-1}$ and $\ran u(z_0)$ are equal to $E(z_0)\spann\left\{\delta_{k+1},\cdots,\delta_p\right\}$. Similarly, both $\ran \res_{z=z_0} u(z)^{-1} $ and $\ker u(z_0)$ are equal to $F(z_0)^{-1} \spann\left\{\delta_{1},\cdots,\delta_k\right\}$.
\qed \end{proof}

\begin{definition} \label{null_pole}
\hspace*{\fill}
\begin{list}{}
{
\setlength\labelwidth{2em}%
\setlength\labelsep{0.5em}%
\setlength\leftmargin{2.5em}%
}
\item[(i)] An analytic $\bbC^p$-valued function $\phi(z)$ with $\phi(z_0)\ne0$ is called a {left }%(right)
{null function} at $z_0$ of order $k>0$ for a meromorphic matrix-valued function $f$, if $\phi(z)^T f(z)$ %($f(z)\phi(z)$ resp.)
is analytic at $z_0$ with a zero of order $k$ at $z_0$.
\item[(ii)] An analytic $\bbC^p$-valued function $\psi(z)$ with $\psi(z_0)\ne0$ is called a {left }%(right)
{pole function } at $z_0$ of order $k>0$ for a meromorphic matrix-valued function $f$, if there exists an analytic $\bbC^p$-valued function $\phi(z)$ with $\phi(z_0)\ne0$ such that $\phi(z)^T f(z)=(z-z_0)^{-k}\psi(z)$. %($f(z)\phi(z)=(z-z_0)^{-k}\psi(z)$ resp.).
\end{list}
\end{definition}
Note that $\psi$ is a left %(right)
pole function for $f$ if and only if $\psi$ is a left %(right)
null function for $f^{-1}$.

The following is immediate from the definition and will prove to be useful for us. %An easy observation is that if $f$ has a local Smith-McMillan form as in Lemma \ref{p4_lmSM}, then
\begin{lemma}\label{p4_null-pole}
Let $f$ has a local Smith--McMillan form \eqref{p4_ee1.31} with $\kappa_1\ge\ldots\ge \kappa_j>0$, $0>\kappa_{r}\ge\ldots\ge \kappa_p$. Then
\begin{list}{}
{
\setlength\labelwidth{2em}%
\setlength\labelsep{0.5em}%
\setlength\leftmargin{2.5em}%
}
\item[(i)] Functions $(E(z)^{-1})^T \delta_1, \ldots ,(E(z)^{-1})^T \delta_j$ are left null functions for $f(z)$ at $z_0$ of orders $\kappa_1,\ldots,\kappa_j$, respectively.
%
%    $F(z)^{-1} \delta_1, \ldots F(z)^{-1} \delta_j$ are right null functions for $f$ at $z_0$ of order $\kappa_1,\ldots,\kappa_j$ respectively.
\item[(ii)]  Functions  $\delta^T_{r}F(z) , \ldots ,\delta^T_{p}F(z)$ are left pole functions for $f(z)$ at $z_0$ of orders $-\kappa_{r},\ldots,-\kappa_p$, respectively.
%
%    $E(z) \delta_{l-r+1}, \ldots E(z) \delta_{l}$ are right pole functions for $f$ at $z_0$ of order $-\kappa_{l-r+1},\ldots,-\kappa_l$ respectively.
\end{list}
\end{lemma}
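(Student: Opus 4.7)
The plan is to verify both parts directly from the Smith--McMillan factorization \eqref{p4_ee1.31}, since left multiplication by $\delta_i^T E(z)^{-1}$ is tailor-made to cancel the $E(z)$ factor. Specifically, for any index $i$ one has the identity
\begin{equation*}
\bigl((E(z)^{-1})^T \delta_i\bigr)^T f(z) = \delta_i^T E(z)^{-1} E(z) \diag\bigl((z-z_0)^{\kappa_1},\ldots,(z-z_0)^{\kappa_p}\bigr) F(z) = (z-z_0)^{\kappa_i} \delta_i^T F(z).
\end{equation*}
This single computation drives everything; the rest is matching notation with Definition \ref{null_pole}.

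For (i), fix $1\le i\le j$, so $\kappa_i>0$, and set $\phi_i(z):=(E(z)^{-1})^T\delta_i$. Since $E(z_0)^{-1}$ is invertible, its transpose is invertible, whence $\phi_i(z_0)\ne 0$, so $\phi_i$ is an admissible base function in the sense of Definition \ref{null_pole}(i). By the identity above, $\phi_i(z)^T f(z) = (z-z_0)^{\kappa_i}\delta_i^T F(z)$, and the row vector $\delta_i^T F(z)$ is analytic with $\delta_i^T F(z_0)\ne 0$ by invertibility of $F(z_0)$. Hence $\phi_i^T f$ is analytic at $z_0$ with a zero of exact order $\kappa_i$, as claimed.

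For (ii), fix $r\le i\le p$, so $\kappa_i<0$, and set $\psi_i(z):=\delta_i^T F(z)$, which is analytic with $\psi_i(z_0)\ne 0$. Taking the same analytic vector $\phi_i(z)=(E(z)^{-1})^T\delta_i$ (still nonvanishing at $z_0$), the identity above rewrites as
\begin{equation*}
\phi_i(z)^T f(z) = (z-z_0)^{-(-\kappa_i)} \psi_i(z),
\end{equation*}
which is precisely the form required by Definition \ref{null_pole}(ii) with pole order $k=-\kappa_i>0$. This yields (ii).

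No essential obstacle arises; the lemma is a repackaging of the Smith--McMillan form \eqref{p4_ee1.31} in the language of null and pole functions. The only points that deserve care are the use of the transpose (rather than the conjugate transpose) of $E(z)^{-1}$, which is dictated by the convention $\phi^T f$ in Definition \ref{null_pole}, and the verification that both $\phi_i(z_0)$ and, in (ii), $\psi_i(z_0)$ are nonzero — both of which reduce to invertibility of $E(z_0)$ and $F(z_0)$.
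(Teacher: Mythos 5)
Your proof is correct and follows exactly the route the paper has in mind: the paper declares the lemma "immediate from the definition," and your one-line computation $\phi_i(z)^T f(z) = \delta_i^T E(z)^{-1} f(z) = (z-z_0)^{\kappa_i}\delta_i^T F(z)$, together with the invertibility of $E(z_0)$, $F(z_0)$, is precisely what makes it immediate.
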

\end{section}

%%%%%%%%%%%%%%%%%%%%%%%%%%%%%%%%%%%%%%%%%%%%%%%%%%%%%%%%%%%%%%%%%%%%%%%%%%%%%

\begin{section}{Herglotz functions}\label{Appendix_Herglotz}

\begin{definition} An analytic in $\bbC_+$ $l\times l$ matrix-valued function $\mathfrak{m}$ is called {Herglotz} if $\imag \mathfrak{m}(z)\ge \bdnot$ for all $z\in  \bbC_+$.
\end{definition}
Here $\imag T\equiv \frac{T-T^*}{2i}$.

We can also define $\mathfrak{m}$ on the lower half plane $\bbC_-$ by reflection $\mathfrak{m}(z)=\mathfrak{m}(\bar{z})^*$, so that $\imag \mathfrak{m}(z) \le \bdnot$ for all $z$ with $\imag z<0$. In particular the function $\mathfrak{m}$ defined in \eqref{p3_m-func} is Herglotz.

We will assume from now on that $\det\imag \mathfrak{m}(z)$ is not identically zero, in which case  the inequality in $\imag \mathfrak{m}(z)\gtrless \bdnot$ is everywhere strict (see \cite[Lemma 5.3]{Ges}).

The following result is well-known (see, e.g., \cite[Thm 5.4]{Ges}).

\begin{lemma}\label{p4_ges}
Let $\mathfrak{m}$ be an $l\times l$ matrix-valued Herglotz function. Then there exist an $l \times l$ matrix-valued measure $\mu$ on $\bbR$ satisfying $\int_\bbR \frac{1}{1+x^2}d\mu(x)<\infty$, and constant matrices $C=C^*, D\ge\bdnot$ such that
\begin{equation}\label{p4_herg}
\mathfrak{m}(z)=C+Dz+\int_\bbR \left(\frac{1}{x-z}-\frac{x}{1+x^2}\right) d\mu(x), \quad z\in \bbC_+.
\end{equation}
The absolutely continuous part of $\mu$ can be recovered from this representation by
\begin{equation*}\label{p2_herg1}
f(x)\equiv\frac{d\mu}{dx}=\pi^{-1}\lim_{\veps\downarrow0} \imag \mathfrak{m}(x+i \veps),
\end{equation*}
and the pure point part by
\begin{equation*}\label{p2_herg2}
\mu(\{\lambda\})=\lim_{\veps \downarrow 0} \veps\, \imag \mathfrak{m}(\lambda+i\veps) = \lim_{\veps\downarrow 0} \veps\, \mathfrak{m}(\lambda+i\veps).
\end{equation*}
\end{lemma}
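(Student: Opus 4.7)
My plan is to reduce the matrix-valued statement to the well-known scalar Herglotz representation theorem by polarization. For any vector $v\in\bbC^l$ the function $m_v(z):=\langle v,\mathfrak{m}(z)v\rangle$ is a scalar Herglotz function, and for any pair $v,w\in\bbC^l$ the function $\langle v,\mathfrak{m}(z)w\rangle$ is a complex linear combination of four such functions via the polarization identity. The classical scalar Nevanlinna--Herglotz representation provides, for each $v$, constants $c_v=\bar c_v$, $d_v\ge 0$ and a positive Borel measure $\mu_v$ with $\int(1+x^2)^{-1}d\mu_v(x)<\infty$ yielding
\begin{equation*}
m_v(z)=c_v+d_v z+\int_\bbR\left(\frac{1}{x-z}-\frac{x}{1+x^2}\right)d\mu_v(x),
\end{equation*}
together with the Stieltjes inversion formulas $\frac{d\mu_v^{\ac}}{dx}=\pi^{-1}\lim_{\veps\downarrow 0}\imag m_v(x+i\veps)$ and $\mu_v(\{\lambda\})=\lim_{\veps\downarrow 0}\veps\,\imag m_v(\lambda+i\veps)$.

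Next I would assemble the matrix-valued objects entry by entry. Polarization gives sesquilinear forms $c(v,w)$, $d(v,w)$ and a complex-valued set function $\mu(\dott;v,w)$ such that $\langle v,\mathfrak{m}(z)w\rangle$ satisfies the analogous formula with $c_v,d_v,\mu_v$ replaced by $c(v,w),d(v,w),\mu(\dott;v,w)$. Using the standard basis $\{e_i\}_{i=1}^l$ and defining the matrix-valued measure $\mu$ by $\mu(E)_{ij}=\mu(E;e_i,e_j)$, and matrices $C$, $D$ analogously, one recovers the representation \eqref{p4_herg} by linearity. The positivity $D\ge\bdnot$ and positive semi-definiteness of $\mu$ on each Borel set follow because $\langle v,Dv\rangle=d_v\ge 0$ and $\langle v,\mu(E)v\rangle=\mu_v(E)\ge 0$ for every $v$; self-adjointness $C=C^*$ follows from $\mathfrak{m}(\bar z)^*=\mathfrak{m}(z)$. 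The recovery formulas in the lemma follow immediately by plugging $v=e_i,w=e_j$ and invoking the scalar Stieltjes inversion entry-wise, because the imaginary-part operation and the limit commute with the finite linear combinations arising in polarization.

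The main obstacle is the bookkeeping involved in showing that $E\mapsto\mu(E;v,w)$ is a genuine complex measure (not merely a signed content) and that the integrability condition $\int(1+x^2)^{-1}d\mu_v<\infty$ transfers to a matrix-valued integrability condition $\int(1+x^2)^{-1}d\mu(x)<\infty$ in the sense of, e.g., $\tr\int(1+x^2)^{-1}d\mu(x)<\infty$. Both are handled by noting that $\mu_{v+w}+\mu_{v-w}=2(\mu_v+\mu_w)$ (parallelogram identity for the sesquilinear form), which makes the polarized measure countably additive and dominated by the diagonal entries, and that $\int(1+x^2)^{-1}d\tr\mu=\sum_i\int(1+x^2)^{-1}d\mu_{e_i}<\infty$ by the scalar result.

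Since this is the classical matrix-valued Herglotz representation, an alternative (and shorter) path would simply be to cite \cite[Thm 5.4]{Ges}, which is in fact what the paper does; the outline above is exactly the argument hidden behind that citation.
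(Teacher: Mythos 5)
Your proposal is correct, and you have also correctly identified that the paper itself gives no proof but simply cites \cite[Thm 5.4]{Ges} (Gesztesy--Tsekanovskii). The polarization reduction you sketch is the standard route and is sound: sesquilinearity of the limiting data follows from the uniqueness of the scalar Nevanlinna representation applied to $m_v$, the parallelogram identity, and homogeneity $m_{cv}=|c|^2 m_v$, while domination by the diagonal entries handles countable additivity and the integrability of the matrix-valued measure.
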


%\begin{definition}\label{disctrete}
%A \textbf{discrete $m$-function} is a Herglotz function, $m(z)$, which has an analytic continuation from $\bbC_+$ to $\bbC\setminus I$ for some bounded interval $I\subset \bbR$, and satisfies
%\begin{align*}
%&z\in\bbR\setminus I \Rightarrow \imag m(z)=\bdnot, \\
%&m(z)=z^{-1}\bdone +O(z^{-2}) \mbox{ at }\infty.
%\end{align*}
%\end{definition}
%
%The following is immediate from Lemma \ref{p4_ges}.
%\begin{lemma}
%A function $m(z)$ on $\bbC_+$ is a discrete $m$-function if and only if
%\begin{equation*}
%m(z)=\int_\bbR \frac{d\mu(x)}{x-z}
%\end{equation*}
%for some probability measure $\mu$ on $\bbR$ with bounded support.
%\end{lemma}

%%%%%%%%%%%%%%%%%%%%%%%%%%%%%%%%%%%%%%%%%%%%%%%%%%%%%%%%%%%%%%%%%%%%%%%%%

Let $\mathfrak{m}$ be a Herglotz function. Assume that the corresponding measure $\mu$ has $\esssup\mu=\fre$, a finite gap set. Denote the associated Riemann surface by $\calS$. Then $\mathfrak{m}$ is meromorphic on $(\bbC\cup\{\infty\})\setminus\fre$, which we identify with $\calS_+$. We are interested in conditions under which it has a continuation through the bands of $\fre$ to some region of $\calS_-$. The lemma below clarifies when this happens. The scalar result is due to Greenstein \cite{Greenstein}, while the matrix-valued can be found in \cite{Ges}.
\begin{lemma}\label{p4_prop}
Let $\mathfrak{m}$ be a matrix-valued Herglotz function with representation \eqref{p4_herg}. Then $\mathfrak{m}$ can be analytically continued from $\calS_+ \cap \pi^{-1}(\bbC_+)$ through an interval $I\subset\bbR$ if and only if the associated measure $\mu$ is purely absolutely continuous on $I$, and the density $f(x)=\frac{d\mu}{dx}$ is real-analytic on $I$. In this case, the analytic continuation of $\mathfrak{m}$ into some domain $\mathcal{D}_-$ of $\calS_-\cap \pi^{-1}(\bbC_-)$ is given by
\begin{equation*}
\mathfrak{m}(z_-)=\mathfrak{m}(\bar{z}_+)^*+2\pi i f(z), \quad z\in\pi(\mathcal{D}_-),
\end{equation*}
where $f(z)$ is the complex-analytic continuation of $f$ to some $\pi(\mathcal{D}_-)$.
\end{lemma}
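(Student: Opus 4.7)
The plan is to prove the equivalence in two directions, with the explicit jump formula emerging from the forward implication.

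For the $(\Leftarrow)$ direction, assuming $\mu|_I = f(x)\,dx$ with $f$ real-analytic on $I$, I would first split $d\mu = f(x)\chi_I(x)\,dx + d\nu$, where $\nu = \mu|_{\bbR\setminus I}$. Substituting into the Herglotz representation \eqref{p4_herg}, the terms $C+Dz$, the $-\frac{x}{1+x^2}$ correction, and the integral against $d\nu$ together form a matrix-valued function $H(z)$ that is jointly analytic on a complex neighborhood of $I$ (since $\supp\nu$ is disjoint from $I$). All the non-analytic behavior near $I$ is therefore concentrated in the Cauchy integral
\begin{equation*}
G(z)=\int_I \frac{f(x)}{x-z}\,dx,
\end{equation*}
so it suffices to analyze $G$.

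The key step is to show that $G|_{\bbC_+}$ admits an analytic continuation across $I$ into a domain in $\bbC_-$ given explicitly by $G|_{\bbC_-}(z)+2\pi i f(z)$. Since $f$ extends complex-analytically to a two-sided neighborhood $U$ of $I$, I would deform the contour: for $z\in U\cap\bbC_-$ close to $I$, replace $I$ by a contour $I^{\downarrow}$ pushed slightly into $\bbC_-$ inside $U$. By Cauchy's theorem (applied to the analytic function $f(w)/(w-z)$ in the strip between $I$ and $I^{\downarrow}$, minus a small disk around $z$), one obtains $\int_{I^{\downarrow}} f(w)/(w-z)\,dw = \int_I f(x)/(x-z)\,dx + 2\pi i f(z)$. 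The left-hand side is the analytic continuation of $G$ from $\bbC_+$ (through $I$) evaluated at $z$, while the right-hand side is $G|_{\bbC_-}(z)+2\pi i f(z)$. Adding back $H$ and using the Hermitian reflection $\mathfrak{m}(\bar z_+)^* = \mathfrak{m}(z)|_{\calS_+\cap\pi^{-1}(\bbC_-)}$ yields the claimed formula $\mathfrak{m}(z_-)=\mathfrak{m}(\bar z_+)^*+2\pi i f(z)$.

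For $(\Rightarrow)$, I would invoke the Stieltjes inversion formula: for every $[a,b]\subset I$,
\begin{equation*}
\mu((a,b))+\tfrac12\bigl[\mu(\{a\})+\mu(\{b\})\bigr]=\frac{1}{\pi}\lim_{\varepsilon\downarrow 0}\int_a^b \imag\mathfrak{m}(x+i\varepsilon)\,dx.
\end{equation*}
If $\mathfrak{m}$ continues analytically across $I$, the limit on the right is just $\pi^{-1}\int_a^b \imag\mathfrak{m}(x)\,dx$, where $\imag\mathfrak{m}$ on $I$ is real-analytic (being the imaginary part of the extension restricted to the real axis). A point mass $\mu(\{x_0\})>0$ with $x_0\in I$ would force a simple pole of $\mathfrak{m}$ at $x_0$ (by Lemma~\ref{p4_ges}), contradicting analyticity; ruling out singular continuous mass on $I$ follows from the equality in Stieltjes inversion applied as $(a,b)$ shrinks to a point combined with the finiteness of the boundary value. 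This forces $\mu|_I$ to be absolutely continuous with density $f(x)=\pi^{-1}\imag\mathfrak{m}(x)$, which is real-analytic on $I$.

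The main obstacle is the contour-deformation argument in $(\Leftarrow)$: one must carefully verify that the strip between $I$ and $I^{\downarrow}$ lies in $U$, handle the endpoints of $I$ (where $f$ may not extend analytically), and argue that the resulting continuation is well-defined independent of the chosen deformation. The $(\Rightarrow)$ direction is comparatively routine once the standard boundary-value theory for Herglotz functions is invoked.
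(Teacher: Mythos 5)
The paper does not prove Lemma~\ref{p4_prop}: it cites Greenstein \cite{Greenstein} for the scalar case and Gesztesy--Tsekanovskii \cite{Ges} for the matrix case, so there is no in-paper proof to compare against. Your self-contained argument is correct and is essentially the classical Plemelj/Stieltjes-inversion proof that underlies those references. The $(\Leftarrow)$ direction properly isolates the Cauchy transform $G(z)=\int_I f(x)/(x-z)\,dx$ (the $-x/(1+x^2)$ correction over $I$ is a $z$-independent constant, and the $d\nu$ piece is analytic near any compact subinterval of $I$ because $\supp\nu\subset\bbR\setminus I$ is closed and disjoint from it), and the contour deformation gives the jump $2\pi i f(z)$ with the correct sign. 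Combined with the reflection $\mathfrak{m}(\bar z_+)^*=\mathfrak{m}$ on $\calS_+\cap\pi^{-1}(\bbC_-)$, this is exactly the claimed formula $\mathfrak{m}(z_-)=\mathfrak{m}(\bar z_+)^*+2\pi i f(z)$. For $(\Rightarrow)$, the Stieltjes-inversion argument is sound; the cleanest phrasing of the step you gloss over is that analytic continuation across $I$ forces $\imag\mathfrak{m}(x+i\varepsilon)\to\imag\mathfrak{m}(x)$ uniformly on compacts of $I$ with a continuous (real-analytic) Hermitian-valued limit, whence Stieltjes inversion identifies $\mu|_I$ with $\pi^{-1}\imag\mathfrak{m}(x)\,dx$, eliminating both point masses and singular continuous parts at once. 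The technical caveats you flag (endpoints of $I$, validity of the strip inside $U$) are real but handled by the standard localization: continue across a small disk $D_r(x_0)\subset U$ by splitting $G$ into $\int_{I\cap D_r(x_0)}$ plus a piece analytic on $D_r(x_0)$, and deform only the diameter of the disk to its lower semicircle; this makes the strip and endpoint issues disappear and shows the continuation is independent of the deformation. In short: your proof is a valid replacement for the cited references and takes the expected route.
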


Thus one can view any result on the continuation of $\mathfrak{m}$ as the corresponding result on the continuation of the absolutely continuous part $f$ of $\mu$.

%Moreover, now assume that $\mathfrak{m}$ has some continuation to some open neighborhood $\mathcal{D}$ of $I$ in $\calS_-$. This means that the two extensions into domains $\mathcal{D}\cap \calS_-\cap \pi^{-1}(\bbC_-)$ and $\mathcal{D}\cap \calS_-\cap \pi^{-1}(\bbC_+)$ have to agree on $\mathcal{D}\cap \calS_-\cap \pi^{-1}(\bbR)$. Note that since $\lim_{\veps\to0} \imag \mathfrak{m}(x-i \veps)=-\pi f(x)$, we have $\mathfrak{m}(z_-)=\mathfrak{m}(\bar{z}_+)^*-2\pi i f(\pi(z))$ for $z_-\in\mathcal{D}\cap \bbC_+$.  This means that the continuation of $f$ to $\pi(\mathcal{D}\setminus\bbR)$ has $f(z+i0)=-f(z-i0)$ for any $z\in \pi((\mathcal{D}\cap\bbR)\setminus I)$.
%Therefore $\mathfrak{m}$ has a continuation to some $\mathcal{D}\subset\calS_-$ if and only if $f$ can be continued to $\pi^{-1}(\mathcal{D})$ with $f(z_+)=-f(z_-)$ (in particular $f$ has to be zero or have a pole at the edge). Also note that this continuation satisfies $f^\sharp=f$ since $f$ is real on $\pi^{-1}(\fre)$.
\end{section}

%%%%%%%%%%%%%%%%%%%%%%%%%%%%%%%%%%%%%%%%%%%%%%%%%%%%%%%%%%%%%%%%%%%%%%%%%%%55

\end{appendix}

\bigskip

{\textbf{Acknowledgement}} The results were done during the author's stay at the California Institute of Technology. The author thanks Prof Barry Simon for the introduction to the problem and numerous useful discussions.

\def\cprime{$'$}

\end{document}